\documentclass{amsart}

\usepackage{amssymb, 
     amsmath, 
     amscd,
     thmtools,
     mathtools
     }

\usepackage{listings}
\usepackage{enumitem}
\usepackage{tcolorbox}

\usepackage{xcolor}
\usepackage{hyperref}
\usepackage[nameinlink]{cleveref}

\newcommand{\CC}{{\mathbb C}}

\newcommand{\PP}{{\mathbb P}}
\newcommand{\kk}{\mathbb{C}}

\newcommand{\Gm}{\mathbb{G}_{\mathrm m}}
\newcommand{\im}{\operatorname{im}}
\newcommand{\GL}{\operatorname{GL}}
\newcommand{\SL}{\operatorname{SL}}
\newcommand{\id}{\operatorname{id}}
\newcommand{\Hom}{\operatorname{Hom}}
\newcommand{\Mat}{\operatorname{Mat}}
\newcommand{\QHM}{Q_{\mathrm{HM}}}
\newcommand{\QHMs}{Q_{\mathrm{HM},s}}

\renewcommand{\i}{\textbf{i}}
\DeclareMathOperator{\Gr}{Gr}
\DeclareMathOperator{\sing}{sing}
\DeclareMathOperator{\sm}{sm}
\renewcommand{\SS}{\mathcal{S}}
\DeclareMathOperator{\SetC}{\mathcal{C}}
\renewcommand{\AA}{\mathbb{A}}
\DeclareMathOperator{\dom}{dom}
\newcommand{\set}[2]{\left\{\,#1 \ | \ #2\,\right\}}
\newcommand{\Bigset}[2]{\left\{\,#1 \ \Big| \ #2\,\right\}}
\newcommand{\cquot}[2]{#1\mkern-2mu\mathord{/\mkern-6mu/}\mkern-3mu#2}

\theoremstyle{plain}
\newtheorem{thm}{Theorem}[section]
\newtheorem{lem}[thm]{Lemma}
\newtheorem{cor}[thm]{Corollary}
\newtheorem{prop}[thm]{Proposition}



\theoremstyle{definition}
\declaretheorem[sibling=thm,name=Definition,qed={$\diamondsuit$}]{definition}
\declaretheorem[sibling=thm,name=Example,qed={$\diamondsuit$}]{ex}
\declaretheorem[sibling=thm,name=Remark,qed={$\diamondsuit$}]{re}

\Crefname{re}{Remark}{Remarks}
\crefname{ex}{example}{examples}
\Crefname{ex}{Example}{Examples}
\crefname{thm}{theorem}{theorems}
\Crefname{thm}{Theorem}{Theorems}
\crefname{lem}{lemma}{lemmas}
\Crefname{lem}{Lemma}{Lemmas}
\crefname{prop}{proposition}{propositions}
\Crefname{prop}{Proposition}{Propositions}
\crefname{cor}{corollary}{corollaries}
\Crefname{cor}{Corollary}{Corollaries}
\crefname{definition}{definition}{definitions}
\Crefname{definition}{Definition}{Definitions}
\crefname{remark}{remark}{remarks}
\Crefname{remark}{Remark}{Remarks}
\crefname{example}{example}{examples}
\Crefname{example}{Example}{Examples}
\crefname{examplex}{example}{examples}
\Crefname{examplex}{Example}{Examples}
\AddToHook{cmd/appendix/before}{%
    \crefalias{section}{appendix}%
    \crefalias{subsection}{appendix}
    \crefalias{Section}{Appendix}%
    \crefalias{Subsection}{Appendix}
}
\crefname{subsection}{subsection}{subsections}
\Crefname{subsection}{Subsection}{Subsections}

\begin{document}

\title{Symmetric subrank and its border analogue}

\begin{abstract}
    The symmetric subrank of homogeneous polynomial is the largest number of terms in a {\em diagonal} form to which it can be specialized by a (typically non-invertible) linear variable substitution. Building on earlier work by Derksen-Makam-Zuiddam and Biaggi-Chang-Draisma-Rupniewski for ordinary tensors, we determine the asymptotic behavior of symmetric subrank and symmetric border subrank of degree-$d$ forms as the number of variables tends to infinity. Furthermore, by using results from geometric invariant theory we show that for cubic (resp. quartic) 
    forms the symmetric subrank and symmetric border subrank coincide if the latter is at most 
    three (resp. two).
\end{abstract}

\author{Benjamin Biaggi}
\address{Mathematical Institute, University of Bern, Alpeneggstrasse 22, 3012 Bern, Switzerland}
\email{benjamin.biaggi@unibe.ch}

\author{Jan Draisma}
\address{Mathematical Institute, University of Bern, Sidlerstrasse 5, 3012 Bern, Switzerland}
\email{jan.draisma@unibe.ch}

\author{Koen de Nooij}
\address{Finalix Business Consulting, Beethovenstrasse 48, 8002 Zürich, Switzerland}
\email{koen@denooij.com}

\author{Immanuel van Santen}
\address{Mathematical Institute, University of Bern, Alpeneggstrasse 22, 3012 Bern, Switzerland}
\email{immanuel.van.santen@unibe.ch}

\thanks{The authors were supported by project grant 10000940 from the Swiss National Science Foundation.}

\keywords{symmetric subrank, tensor decomposition, hyperplane sections}

\maketitle

\setcounter{tocdepth}{1}
\tableofcontents

\section{Introduction and main results}
Throughout, we let $V$ be an $n$-dimensional vector space over an algebraically closed field $K$. Let $d$ be a nonnegative integer. 
We denote by $S^dV $ the \emph{$d$-symmetric power of $V$}, the quotient space of $V ^{ \otimes d}$ by the vector subspace spanned by all differences $T - \sigma (T)$, where $T  \in V^{\otimes d}$ and $\sigma \in S_d$ is a permutation of $\{1, \dots, d\}$.
The elements of $S^d V$ are called symmetric tensors in this note, and we use the notation $v_1 v_2 \cdots v_d$ for the image in $S^d V$ of $v_1 \otimes \cdots \otimes v_d$; these symmetric tensors are called {\em simple}.
The action of a linear map $\varphi : V \to W$ on symmetric tensors in $S^dV$ is determined by $\varphi (v_1 \cdots v_d ):= (\varphi v_1) \cdots (\varphi v_d)$.
If $V = W$ and $ \varphi \in \GL (V)$, this yields the usual left action of $\GL (V) $ on $S^d V$ and we sometimes denote $\varphi \cdot f$ to emphasize this fact. Given two tensors $f \in S^d V$ and $h \in S^d W$ and a linear map $\varphi \in  \Hom (V,W)$ with $\varphi f = h$, we say that $h$ is a \emph{symmetric restriction} of $f$, denoted by $h \leq _s f$.

\subsection{Symmetric subrank}
For reasons that will become clear soon (\Cref{prop:Char}), we assume throughout that 
\begin{center}
   \begin{tcolorbox}[
  colback=white,
  colframe=black,
  arc=3mm,
  width = \textwidth
   ]
   \vspace{-0.2cm}
   \begin{align} 
   \text{$d$ is not a power of the characteristic exponent of $K$ times $0,1,$ or $2$.} \tag{*} \label{charass}
   \end{align}
   \end{tcolorbox}   
\end{center}

\begin{definition}
    Let $f \in S^d V$ be a symmetric tensor. The \emph{symmetric subrank} of $f$ is defined as 
    \[
    Q_s(f) := \max \{r \mid \exists \text{ linear map } \varphi: V \to K^r; \, \varphi  f  = I_r \},
    \]
    where $I_r := e_1^d + \dots + e_r^d $ is the \emph{$r$-th unit tensor} and $e_1 \dots , e_r$ is the standard basis of $K^r$.
\end{definition}

The assumption \eqref{charass} implies that, for $r \geq 2$, $I_r$ is not the $d$-th power of an element of $V$.

The symmetric subrank was introduced in \cite{CFTZ22symsubrank}. Note that $d=2$ is not allowed by our condition \eqref{charass}, but otherwise, for a quadratic form over a quadratically closed field of characteristic different from $2$, 
the symmetric subrank would equal the rank of its Gram matrix. % (this is because in characteristic $\neq 2$, every symmetric bilinear form has an orthogonal basis. (see e.g.~\cite[Ch. XV, Theorem~3.1]{La2002Algebra}) that can be normalized when the field is quadratically closed.) 
%\cite[Lemma~2.5 and Theorem~2.11]{CFTZ22symsubrank}.

Symmetric subrank is dual to the symmetric tensor rank  (also called Waring rank for polynomials). Given a symmetric tensor $f \in S^d V$, the symmetric tensor rank equals 
\[
R_s (f) = \min  \{r \mid \exists \text{ linear map } \psi: K^r \to V; \, \psi  I_r  = f \}.
\]
Symmetric subrank is a symmetric analogue of Strassen's subrank for ordinary tensors  \cite{Strassen1987}, defined as follows.
If $T\in V_1 \otimes \cdots \otimes V_d$  is a tensor, the \emph{subrank} $Q(T)$ is the maximal $r$ for which there
exist linear maps $\varphi_i:V_i \to K^r,\ i=1,\dots,d$, such that 
\[
(\varphi_1 \otimes \cdots \otimes \varphi_d) T = e_1^{\otimes d} + \dots + e_r^{\otimes d}.
\]
For $d=3$, $T$ can be thought of as a bilinear map $V_1^* \times V_2^* \to V_3$, and $Q(T)$ is thought of as the {\em value} of $T$, which says how many independent scalar multiplications can be linearly embedded in $T$.

\subsection*{Different perspectives on the symmetric subrank}
If $K$ has characteristic $0$, then symmetric tensors in $S^d V$ are in one-to-one correspondence with tensors in $V^{\otimes d}$ fixed by all permutations in $S_d$. Let $T \in V^{\otimes d}$ be a symmetric tensor in this latter sense. Then the symmetric subrank is
\[
Q_s(T) = \max\{ r \mid \exists \text{ linear map } \varphi: V \to K^r; \, \varphi ^ {\otimes d} T = \sum_{i = 1}^r e_i ^{\otimes d}\},
\]
and it is immediate that $Q(T) \geq Q_s(T)$.
The subrank analogue of Comon's conjecture is disproved in \cite{shitov22_comon_for_subrank}: there exists a symmetric tensor $T$ over the complex numbers with $Q(T)>Q_s(T)$.

In this setting, a symmetric tensor of order three $T  \in V\otimes V \otimes V$ in particular defines a symmetric bilinear form $T : V^{\ast} \times V^\ast \to V$.  The symmetric subrank encodes the ``symmetric value'' of this form, i.e. the number of linearly independent scalar multiplication which can be symmetrically embedded into $T$: If the linear map $\varphi : V \to K^r$ satisfies $\varphi ^ {\otimes 3} T = I_r$, we have $ \varphi (T (\varphi ^t (a), \varphi ^t (b))) = I_r (a,b) $ for all $a,b \in K^r $, where $\varphi^t:K^r \to V^*$ is the map dual to $\varphi$ and 
\[
I_r : K^r \times K^r \to K^r;\, ((a_1 \dots , a_r),(b_1, \dots , b_r)) \mapsto  (a_1b_1,  \dots , a_r b_r). 
\]
In some settings, it is more convenient to work with homogeneous polynomials of degree $d$, which are elements of $S^d ((K^{n})^ \ast )$, and with a more geometric description of the subrank. For a homogeneous polynomial $f \in K[x_0, \dots , x_{n-1}]_d$, the symmetric subrank of $f$ is the largest number of terms in a diagonal form to which it can be specialized by a linear variable substitution
\[
Q_s(f) = \max \{r \mid \exists \text{ linear map } \varphi :K^r \to K^n; \; 
f \circ \varphi = x_0^d + \dots + x_{r-1}^d\}.
\]
Given a homogeneous polynomial $f$, the zero set $\mathcal{V}(f)$ defines a projective hypersurface in $\PP ^{n-1}$. The subrank of $f$ is the maximal integer $r$ such that we can intersect the hypersurface $\mathcal{V}(f)$ with $n-r$ hyperplanes and the resulting hypersurface inside $\PP ^{r-1}$ is, up to the action of $\GL _r$, defined by the Fermat form $x_0^d + \dots + x_{r-1}^d = I_r$:
\[
Q_s (f) = \max \{r \mid \exists \text{ hypersurfaces }H_i; \; \mathcal{V}(I_r) \in \GL_r \cdot (\mathcal{V}(f) \cap H_1 \cap \dots \cap H_{n-r}) \}.
\]
We will use this geometric description in 
\Cref{sec:cubic}.
\subsection{Generic (symmetric) subrank}
The locus of tensors in $V_1 \otimes \cdots \otimes V_d$ with a given subrank $r$ is constructible; therefore there is a unique integer $r$ for which this locus is dense. We call that $r$  the \emph{generic subrank} of tensors in  $V_1 \otimes \cdots \otimes V_d$. Setting $n_i := \dim V_i$, we denote this generic subrank by $Q(n_1, \dots , n_d)$. In \cite[Theorem~3.7]{PSS-exactvaluesgenericsubrank}, it is shown that for $d\geq 3$ and every algebraically closed field
\[
Q(n_1, \dots , n_d) = \min \{ n_1, \dots , n_d , \lfloor (n_1 + \dots + n_d - (d-1))^{1/(d-1} \rfloor\}.  
\] 
We point out that the assumption that $K$ is  algebraically closed does not appear explicitly in \cite{PSS-exactvaluesgenericsubrank}. But it is needed---indeed, \cite{PSS-exactvaluesgenericsubrank} builds on the earlier paper \cite{DMZ24} that does need this requirement. In \Cref{sec:gen_sym_subrank} we follow an approach similar to \cite{DMZ24} to show:
\begin{thm}
\label{thm:generic_symsubrank}
Let $\dim V = n$ (and assume \eqref{charass}). Then the generic symmetric subrank $Q_s(n)$ of elements in $S^d V$ satisfies 
\[
\lim_{n \to \infty} \frac{Q_s(n)}{(d! \cdot n)^{1/(d-1)}} = 1.
\]
\end{thm}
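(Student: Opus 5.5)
We prove matching upper and lower bounds: $Q_s(n) \le (d!\,n)^{1/(d-1)}(1+o(1))$ by a dimension count, and $Q_s(n) \ge (d!\,n)^{1/(d-1)}(1-o(1))$ by a dominance argument in the style of \cite{DMZ24}.

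\emph{Upper bound.} Consider the morphism
\[
\Phi:\ \Hom(K^r,V)\times \GL_r\times S^d(K^r)^{\perp}\longrightarrow S^dV ,
\]
and more invariantly the set of $f$ with $Q_s(f)\ge r$. If $\varphi f=I_r$ with $\varphi:V\to K^r$, then $\varphi$ is surjective and $f$ lies in the set
\[
X_r=\set{f}{\exists\, \varphi \text{ surjective},\ \varphi f=I_r}.
\]
We parametrize $X_r$ as follows. Choose a complement $U$ of $\ker\varphi$, so that $V=U\oplus W$ with $W=\ker\varphi$ of dimension $n-r$, and identify $U\cong K^r$ via $\varphi$. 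Then $f=I_r(U)+g$, where $g\in S^dV$ lies in the span of monomials involving at least one factor from $W$. The choice of $W$ together with the identification is a choice in $\Gr(n-r,n)$ plus $\GL_r$-data, but we may absorb everything into the data $(\text{an injective } \psi:K^r\to V,\ g\in W\cdot S^{d-1}V)$, with $f=\psi I_r+g$.

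Hence $\dim X_r\le \dim S^dV-\dim S^dU+(\text{parameters for } U,W)$. More carefully, the fibre-counting gives
\[
\dim X_r\le \dim S^dV-\binom{r+d-1}{d}+r(n-r)+r^2+r(n-r),
\]
the last terms accounting for the choice of flag data and of $\psi$. The $+r^2$ can be sharpened, since the stabilizer of $I_r$ is finite. So $X_r$ fails to be dense as soon as $\binom{r+d-1}{d}>2rn$, roughly. We need the sharp constant $d!\,n$, which means $r^{d}/d!\gtrsim rn$ and so $r^{d-1}\gtrsim d!\,n$. Accordingly we redo the count precisely: the correct count is $\dim\Hom(V,K^r)=rn$ for $\varphi$, and the fibre condition $\varphi f=I_r$ imposes $\binom{r+d-1}{d}$ conditions. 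The incidence variety $\{(\varphi,f):\varphi f=I_r\}$ therefore has dimension $rn+\dim S^dV-\binom{r+d-1}{d}$. This projects onto the closure of $\{f: Q_s(f)\ge r\}$, so that set has codimension at least $\binom{r+d-1}{d}-rn$. Hence the generic subrank satisfies $\binom{r+d-1}{d}\le rn$, which gives $r^{d-1}\le d!\,n(1+o(1))$.

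\emph{Lower bound.} Following \cite{DMZ24}, it suffices to show that for $r\approx (d!\,n)^{1/(d-1)}(1-\epsilon)$ the map $\mu:\Hom(V,K^r)\times S^dV\supset Z\to$ behaves well. Concretely, $Z=\{(\varphi,f):\varphi f=I_r\}$ should have a component dominating $S^dV$. We take the point $(\varphi_0,f_0)$ with $V=K^r\oplus W$, $\varphi_0$ the projection and $f_0=I_r$. By the dimension count it is enough to show that the projection $Z\to S^dV$ has a fibre of the expected dimension $rn-\binom{r+d-1}{d}$ at some point of $Z$. Equivalently, it is enough that the differential of $\varphi\mapsto \varphi f$ at $(\varphi_0,f)$, namely
\[
\dot\varphi\mapsto \dot\varphi\cdot \partial f,\qquad \Hom(V,K^r)\to S^d K^r,
\]
is surjective for a suitable $f$ with $\varphi_0 f=I_r$. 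Then $Z$ is smooth of the expected dimension near that point, and $\varphi$-orbits give the fibre. Writing $f=I_r+\sum_j w_j h_j$ with $w_j$ a basis of $W$ and $h_j\in S^{d-1}V$, the image of the differential is
\[
\operatorname{span}\{e_i^{d-1}\cdot K^r\}+\operatorname{span}\{\bar h_j\cdot K^r\},
\]
where $\bar h_j\in S^{d-1}K^r$ is the image of $h_j$. So we need $n-r$ generic forms $\bar h_j$ of degree $d-1$, together with the $e_i^{d-1}$, to generate $S^dK^r$ in degree $d$ as an ideal, i.e. $\langle \bar h_j, e_i^{d-1}\rangle_d=S^dK^r$.

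This is the main obstacle: a Fröberg-type statement saying that $N=n$ generic forms of degree $d-1$ in $r$ variables span degree $d$ whenever $Nr\ge \binom{r+d-1}{d}(1+\epsilon)$. We only need this asymptotically, and the condition \eqref{charass} should enter by guaranteeing that the $e_i^{d-1}$ contribute nondegenerately. To prove it, I would first try the known result that multiplication $S^{d-1}\otimes S^1\to S^d$ by generic forms has maximal rank when the number of forms is large relative to $r$; this case is within the range where Hochster–Laksov-type results hold. Alternatively, I would use an explicit monomial/combinatorial construction: choose the $\bar h_j$ as monomials, or sums of monomials, so that the degree-$d$ multiples cover all monomials, using a near-perfect packing. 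The $o(1)$ slack then absorbs the overlaps.
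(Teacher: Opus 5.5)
Your proposal follows the paper's route in both halves, and the upper bound is complete. The incidence variety $\{(\varphi,f) : \varphi f=I_r\}$ lies over the surjective $\varphi\in\Hom(V,K^r)$; surjectivity is where \eqref{charass} enters, via \Cref{prop:Char}(1). Its fibres are affine spaces of dimension $\dim S^dV-\binom{r+d-1}{d}$. This gives exactly the paper's bound $\dim\mathcal{C}_{\ge r}\le\binom{n+d-1}{d}-\binom{r+d-1}{d}+nr$, which the paper obtains from $\GL(V)\times X_r$ by bounding fibres below by $n(n-r)$. Simply delete your first, looser count.

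Your lower-bound reduction is also the paper's. With $f=I_r+\sum_j w_jh_j$, the image of $\dot\varphi\mapsto\dot\varphi f$ contains $\sum_j\bar h_jK^r$. Surjectivity at one point forces dominance: the fibre over $f$ then has dimension $\le rn-\binom{r+d-1}{d}=\dim Z-\dim S^dV$ at $\varphi_0$, and $Z$ is irreducible.

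The gap is that you stop at the only nontrivial step. You need $n-r$ general $\bar h_j\in S^{d-1}K^r$ with $\sum_j\bar h_jK^r=S^dK^r$. This is exactly Hochster--Laksov \cite[Theorem~1]{Hochster87}, which the paper cites: for general forms, the span of the products $u\bar h_j$ ($u\in K^r$) has the expected dimension $\min\{(n-r)r,\binom{r+d-1}{d}\}$, with no restriction on the number of forms. So no asymptotic Fr\"oberg-type statement with slack $\epsilon$ is needed. The exact sufficient condition is $(n-r)r\ge\binom{r+d-1}{d}$. Since $r=O(\sqrt n)=o(n)$ for $d\ge 3$, the choice $r=\lfloor(1-\epsilon)(d!\,n)^{1/(d-1)}\rfloor$ satisfies it for large $n$, giving $\liminf\ge 1$.

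Your fallback would not do this job: monomial choices cannot reach the sharp constant. For $d=3$, every squarefree $x_ax_bx_c$ must be divisible by a chosen $x_ax_b$. So the complement of the graph of chosen squarefree monomials is triangle-free, and Mantel's theorem forces about $r^2/4$ monomials instead of the $r^2/6$ the count allows. That yields only $r\approx 2\sqrt n$ rather than $\sqrt{6n}$.

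Finally, drop the $e_i^{d-1}$ terms instead of counting them among ``$N=n$'' forms. They are not general, so Hochster--Laksov says nothing about them. Moreover, in the differential they carry a factor $d$, which is zero in $K$ for instance when $p=2$, $d=6$, a case allowed by \eqref{charass}. That condition plays no role in this step.
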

\begin{re}
    For $d=3$, the denominator above equals $\sqrt{6n}$, which is larger than the value $Q(n,n,n)=\lfloor \sqrt{3n-2} \rfloor$ from \cite[Corollary 2.17]{PSS-exactvaluesgenericsubrank}. At first, this seems to contradict the fact that $Q(T) \geq Q_s(T)$ for symmetric tensors in characteristic zero. However, the point here is that a sufficiently general tensor in $V \otimes V \otimes V$ is {\em not} symmetric.
\end{re}
We will show that the generic symmetric subrank is well-defined and give an upper bound to it using similar arguments to those in \cite{DMZ24}. In \Cref{sec:gen_sym_subrank_lower_bound} we will use work by Hochster-Laksov \cite[Theorem 1]{Hochster87} on the linear syzygies of sufficiently general forms to give a lower bound that matches the upper bound up to lower-order terms.

\subsection{(Generic) symmetric border subrank} 
The \emph{border subrank} of a tensor in $V_1 \otimes \dots \otimes V_d$ is the maximal $r$ for which $I_r$
lies in the closure of the set
\[ \{(\varphi_1 \otimes \cdots \otimes \varphi_d)T \mid 
\varphi_i \in \Hom(V_i,K^r), i=1,\dots,d\}. \] 

\begin{definition}
    Let $f \in S^d V$ be a symmetric tensor. The \emph{symmetric border subrank} of $f$ is defined as 
    \[
    \underline{Q}_s(f) := \max \{r \mid I_r \in \overline{ \{ \varphi  f \mid \varphi \in \Hom (V, K^r) \}} \}.
    \]   
\end{definition}
It follows from the definitions that $1 \leq Q_s(f) \leq \underline{Q}_s(f) \leq n$ for all nonzero $f \in S^dV$.  In \cite{BCDR25-bordersubrank}, it is proven that the generic border subrank has the same growth rate as the generic subrank, see \S 1.3 an \S 1.4 of \cite{BCDR25-bordersubrank} for the result. In \Cref{sec:gen_sym_border_subrank}, we follow these ideas to show that the same is true for the generic symmetric border subrank.

\begin{thm} \label{thm:generic_symmetric_border_subrank}
Let $\dim V = n$ (and assume \eqref{charass}). The generic symmetric border subrank of $S^d V$ is  $\mathcal{O}(n^{1/(d-1)})$ for $n \to \infty.
$
\end{thm}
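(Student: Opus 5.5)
We adapt the strategy of \cite{BCDR25-bordersubrank} to the symmetric setting. The basic tool is the Hilbert--Mumford criterion applied to the action of $\GL_r$ (or rather $\Hom(V,K^r)$) on $S^dV$. If $I_r$ lies in the closure of $\{\varphi f \mid \varphi \in \Hom(V,K^r)\}$, we want a one-parameter-subgroup-type degeneration that forces $f$ into a special position.

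\textbf{Step 1 (reduction to a combinatorial normal form).} Write $\varphi f$ for $\varphi \in \Hom(V,K^r)$. If $I_r$ is in the closure of this set, then $I_r$ is also in the closure of a $\GL(V)\times\GL_r$-orbit-type set. By the Hilbert--Mumford-type argument in \cite{BCDR25-bordersubrank} (valuative criterion: a curve $\varphi(t)\in \Hom(V,K^r)((t))$ with $\varphi(t) f \to I_r$), we can use the Iwasawa/Smith normal form decomposition over $K[[t]]$. This lets us assume $\varphi(t)=g\cdot \mathrm{diag}(t^{a_1},\dots)\cdot h$ with $g,h$ invertible over $K[[t]]$. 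Passing to the limit then shows that some $\GL(V)$-translate $f'$ of $f$, after restricting, has a ``staircase'' support. Concretely, there are integer weights $a_1,\dots,a_n$ on a basis $x_1,\dots,x_n$ of $V$, and weights $b_1,\dots,b_r$, such that the following hold:
\begin{itemize}
\item every monomial $x^\alpha$ of $f'$ satisfies $\sum \alpha_i a_i \ge$ some bound dictated by the $b_j$;
\item the leading part yields a nondegenerate map onto $I_r$.
\end{itemize}
The upshot, as in \cite{BCDR25-bordersubrank}, is a condition saying that $f'$ lies in a linear subspace $U\subseteq S^dV$ given by the vanishing of all coefficients of monomials of too-low weight. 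Here the weights can be normalized to a finite set of ``combinatorial types''.

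\textbf{Step 2 (dimension count).} For each such type, the set of $f$ with $\underline{Q}_s(f)\ge r$ is contained in $\GL(V)\cdot U$. Hence
\[
\dim\{f : \underline{Q}_s(f)\ge r\} \le n^2 + \dim U .
\]
We must show that $\operatorname{codim} U > n^2$ once $r \ge C n^{1/(d-1)}$. The key estimate, mirroring the ordinary-tensor case, is that the vanishing conditions include all monomials supported on suitable index sets. By a pigeonhole/averaging argument over the weights, this produces at least roughly $c\, r^{d-1}\cdot n$ vanishing coefficients. For instance, after relabeling, there is a set of about $r$ variables $x_{i_1},\dots$ whose ``low'' weights force all monomials $x_{i_{j_1}}\cdots x_{i_{j_{d-1}}}x_k$ with $k$ ranging over roughly $n$ indices to vanish. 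Since there are finitely many types (polynomially many in $n$, so they do not affect dimension), genericity fails as soon as $c\, r^{d-1} n > n^2$, i.e.\ $r > (n/c)^{1/(d-1)}$. This yields $\mathcal{O}(n^{1/(d-1)})$.

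\textbf{Main obstacle.} The hard step is Step 2's combinatorial lower bound on the number of forced vanishing coefficients. In \cite{BCDR25-bordersubrank} this is handled via a polytope/weight argument, where one needs that the diagonal monomials $e_j^d$ survive while off-diagonal ones of lower weight are killed. In the symmetric setting a single weight vector governs all $d$ factors, which should make the counting easier, but monomials are now counted up to symmetry, losing a factor of $d!$. I would first try to copy their lemma verbatim with $a^{(1)}=\dots=a^{(d)}$, and check that the resulting inequality still gives $\Omega(r^{d-1} n)$ vanishing monomials.
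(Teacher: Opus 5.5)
Your Step 1 is a legitimate way to get a one-parameter-subgroup statement like \cite[Proposition~6]{BCDR25-bordersubrank}, which is what the paper uses. Step 2, however, does not work. In your own normal form $\varphi(t)=g(t)D(t)h(t)$, the $r\times n$ matrix $D(t)$ kills every monomial involving one of $x_{r+1},\dots,x_n$. So the only conditions you obtain on $f'=h(0)f$ concern its component in $S^dU_r$, where $U_r:=\langle x_1,\dots,x_r\rangle$. That is at most $\binom{r+d-1}{d}\approx r^d/d!$ coefficients, never the $c\,r^{d-1}n\gg r^d$ (for $r\ll n$) that you need, and nothing in the setup forces monomials $x_{i_1}\cdots x_{i_{d-1}}x_k$ with $k$ free to vanish.

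Worse, by keeping only the vanishing of negative-weight coefficients you discard the essential nonlinear condition. The type with all $a_i=0$ occurs for every $f$ with $Q_s(f)\ge r$ (take $\varphi$ constant) and has no negative-weight monomials at all. So for that type your $U$ is all of $S^dV$ and the bound $n^2+\dim U$ is vacuous; there, the entire constraint is that the $S^dU_r$-component of $f'$ lies in $\GL_r\cdot I_r$. Even after restoring such a condition, a codimension of order $r^d/d!$ played against the $n^2$ parameters of $\GL(V)$ only gives $r=O(n^{2/d})$. This is weaker than $O(n^{1/(d-1)})$ for $d\ge 3$.

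What is missing is the paper's three-part count (\Cref{lm:S0_sym}, \Cref{lm:Tg0_sym} and \Cref{prop:BoundDr}). Write $\lim_{t\to0}\lambda(t)f=\lim_{t\to\infty}\lambda(t)h=h_0$ with $h\in\GL(V)\cdot I_r$, so that $f=f_{>0}+h_0$.
\begin{enumerate}
\item The weight-zero part $h_0$ lies in the orbit closure of a unit tensor in $S^dW'$ for some $r$-dimensional $W'$. So it costs at most $r(n-r)+r^2=rn$ parameters, using $\dim\GL_r\cdot I_r=r^2$.
\item Since $h\in(S^dV)_{\le0}$ and $I_r$ has slice rank $s=\lceil r/2\rceil$ (part (2) of \Cref{prop:Char}), at least $s$ weights are $\le 0$. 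Hence $f_{>0}\in W\cdot S^{d-1}V$ with $W=\langle v_{s+1},\dots,v_n\rangle$, a subspace of codimension $\binom{s+d-1}{d}$.
\item One pays only $2s(n-s)$ parameters for choosing $\langle v_1,\dots,v_s\rangle$ and $W$ in Grassmannians, instead of $n^2$ for all of $\GL(V)$.
\end{enumerate}
At the generic value this gives $\binom{s+d-1}{d}\le 2s(n-s)+rn$, hence $r=O(n^{1/(d-1)})$.

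Within your Smith-normal-form setup the same works if you parametrize by $\psi=D(1)h(0)\in\Hom(V,K^r)$, which costs $rn$ parameters. Then do the slice-rank and orbit-dimension estimates inside $S^dK^r$, which gives $\binom{s+d-1}{d}\le rn+r^2$. The point is that the overhead must be $O(rn)$, and the codimension must come from $S^d$ of an $s$-dimensional space, not from $r^{d-1}n$ mixed monomials.
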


\subsection{Comparing symmetric subrank and symmetric border subrank}
In \Cref{sec:difference}, we prove several results relating symmetric subrank and symmetric border subrank. We recall that the generic border subrank of order $3$ tensors is, for large enough $n$, strictly larger than the generic subrank, see \cite[Theorem~4]{BCDR25-bordersubrank}. We were not able to show a similar result in our setting. The issue is the following: there is a natural notion of {\em Hilbert-Mumford subrank} $\QHM(T)$ of a tensor $T$ that satisfies $Q(T) \leq \QHM(T) \leq \underline{Q}(T)$, and \cite[Proof of Theorem~4]{BCDR25-bordersubrank} shows that in fact the first 
inequality is strict for sufficiently general order-$3$ tensors $T$ when $n$ is large enough. This is not true for its symmetric analogue (we define $\QHMs(f)$ in \Cref{sec:difference}):

\begin{thm} \label{thm:HMsubrank}
    For any $f \in S^d V$, the symmetric Hilbert-Mumford subrank $\QHMs(f)$ equals the symmetric subrank $Q_s(f)$.
\end{thm}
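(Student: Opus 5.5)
The plan is to reduce the statement to a weight analysis of a single one-parameter subgroup. One inequality is formal: $Q_s(f)\le \QHMs(f)$, because a constant family $\varphi$ with $\varphi f=I_r$ is a degenerate Hilbert--Mumford degeneration, with trivial one-parameter subgroup. For the other inequality I assume the definition, which I have not seen yet, is: $\QHMs(f)\ge r$ iff there are a one-parameter subgroup $\lambda\colon \Gm\to \GL(V)$ and a linear map $\psi\colon V\to K^r$ with $\lim_{t\to 0}\psi\,\lambda(t) f = I_r$. Given such data, I want to produce a single linear map $\varphi\colon V\to K^r$ with $\varphi f=I_r$.

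\emph{Step 1: weight decomposition.} Diagonalize $\lambda$, so that $V=\bigoplus_{k\in\mathbb Z}V_k$ with $\lambda(t)$ acting on $V_k$ by $t^k$. Then $S^dV=\bigoplus_m (S^dV)_m$ and $f=\sum_m f_m$. Hence
\[
\psi\lambda(t)f=\sum_m t^m\,\psi f_m ,
\]
and existence of the limit forces $\psi f_m=0$ for $m<0$ and $\psi f_0=I_r$. If instead $\lambda$ lives on the $K^r$ side, a diagonal $\lambda(t)$ with weights $w_i$ is forced to $w_i=0$. Indeed, the coefficient of $e_i^d$ must behave like $c_i t^{dw_i}\to 1$, so $\lambda$ is trivial and the claim is immediate. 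So the real content lies on the $V$ side.

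\emph{Step 2: remove the positive-weight contributions.} Write $\psi=\sum_k\psi_k$ with $\psi_k=\psi|_{V_k}$. Then $\psi f_0$ is a sum of contributions $\psi_{k_1}(v_1)\cdots\psi_{k_d}(v_d)$ with $k_1+\dots+k_d=0$. The idea is to rescale $\psi$ differently on each $V_k$, namely $\psi_s:=\sum_k s^{k}\psi_k$, so that $\psi_s f=\sum_m s^m\psi f_m$. Then $\psi_1 f=\psi f$ and the family $s\mapsto \psi_s f$ is a Laurent polynomial with constant term $I_r$ and no negative powers. My plan is to compare $\psi f=\sum_{m\ge 0}\psi f_m$ with $I_r$ using the structure of the Fermat form. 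Its variables $e_1,\dots,e_r$ are the coordinates of $K^r$, and the weights $k$ induce a filtration of $K^r$ via the images $\psi(V_{\ge k})$. I would then find an element $g(s)\in\GL_r(K[s])$ with $g(0)=\id$, compatible with this filtration, such that $g(s)\psi_s f=I_r$ holds identically. Specializing at $s=1$ (or at any $s$ where $g(s)$ is invertible) would then give a linear map with $f\mapsto I_r$.

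\emph{Main obstacle.} The hard step is Step 2. Terms of positive total weight can involve mixed products of vectors of positive and negative weights, and there is no a priori reason they cancel. What I would try first is to exploit assumption \eqref{charass} and the rigidity of $I_r$. For $d\ge 3$, the linear forms $e_i$ are determined up to scalars by $I_r$ as the only points whose $d$-th powers occur in a minimal decomposition. Consequently, in any filtered deformation $I_r+\sum_{m>0}s^m h_m$ arising this way, the filtration on $K^r$ must be compatible with the coordinate splitting, that is, each $e_i$ lies in a single weight piece. Granting this, I would restrict $\psi$ to a subspace of $V$ spanned by suitable weight-homogeneous preimages of the $e_i$. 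There the mixed terms vanish by weight considerations, and the restriction of $f$ maps exactly to $I_r$. Showing that the $e_i$ are weight-homogeneous is where I expect the real work, and hence the use of \eqref{charass}, to lie.
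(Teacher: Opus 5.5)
\textbf{There is a genuine gap: you work with the wrong definition, and under it the theorem is false.}

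In \Cref{Def.HMs}, $\QHMs(f)\ge r$ means that the limit $\lim_{t\to 0}\lambda(t)\cdot f$ \emph{itself} lies in $\GL(V)\cdot I_r$. That is, it equals $\sum_{i=1}^r v_i^d$ with $v_1,\dots,v_r\in V$ linearly independent, and no linear map is applied afterwards. Your version, $\lim_{t\to0}\psi\lambda(t)f=I_r$ for an arbitrary $\psi\colon V\to K^r$, only asks that $I_r$ be a restriction of the weight-zero part $f_0$. That is strictly weaker.

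The quintic $f=u^2w^3+v^5+uvw^3$ of \Cref{ex:Quintic} is a counterexample to your version. Take $\lambda(t)u=t^{-3}u$, $\lambda(t)v=v$, $\lambda(t)w=t^2w$, and $\psi(u)=\psi(w)=e_2$, $\psi(v)=e_1$. Then $f_0=u^2w^3+v^5$ and $f_3=uvw^3$, so $\psi\lambda(t)f=I_2+t^3e_1e_2^4\to I_2$. Yet $Q_s(f)=1$.

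So Step 2 cannot work. Here $\psi_sf=I_2+s^3e_1e_2^4$, and a $g(s)$ as you want would, after specialization, give a linear map sending $f$ to $I_2$. The rigidity heuristic fails for the same reason. The Fermat term $e_2^5=\psi(u^2w^3)$ comes from a mixed product of weights $-3$ and $2$. Discarding mixed terms by weight considerations therefore also discards one of the two Fermat terms: the equivariant projections onto $\langle v,w\rangle$ or $\langle u,v\rangle$ send $f$ to $v^5$.

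Your easy inequality also needs adjusting. Under the paper's definition the trivial $\lambda$ only works if $f\in\GL(V)\cdot I_r$ already. In general, move $f$ into $X_r$ via \Cref{lm:image_phi_r} and use a $\lambda$ acting trivially on $U$ and by $t$ on $W$.

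\textbf{What is missing.} With the correct definition the argument is short. The missing ingredients are a $\lambda$-\emph{equivariant} map to an $r$-dimensional space, together with the orbit-dimension rigidity of $I_r$ from \Cref{prop:Char}(3).

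The paper chooses $r$ weight vectors of $\lambda$ such that the coordinate projection $\pi$ onto their span $U$ keeps $\pi(v_1),\dots,\pi(v_r)$ independent. Since $\pi$ commutes with $\lambda$, we get $\lambda(t)\pi(f)\to\sum_i\pi(v_i)^d\in\GL(U)\cdot I_r$. So $\pi(f)\in S^dU$ has maximal symmetric border subrank. Because $\dim\GL(U)\cdot I_r=r^2=\dim\GL(U)$, this orbit cannot lie in the lower-dimensional boundary of $\overline{\GL(U)\cdot\pi(f)}$. Hence $\pi(f)\in\GL(U)\cdot I_r$ by \Cref{lm:max_sym_bordersubrank}, and $Q_s(f)\ge r$.

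Your weight-homogeneity intuition is in fact correct for the right definition. The stabilizer of $\sum_i v_i^d$ preserves $\langle v_1,\dots,v_r\rangle$ and acts on it through a finite group, so the connected group $\lambda(\Gm)$ fixes every $v_i$. Then the equivariant projection onto the weight-zero space $V_0$ sends $f$ to exactly $\sum_i v_i^d$, which gives an alternative proof.
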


Nevertheless, we do give an example of a quintic $f$ with 
$Q_s(f) < \underline{Q}_s(f)$, see \Cref{ex:Quintic}. 
Finally, \Cref{sec:cubic} is concerned with $d=3, 4$, 
i.e., with cubic and quartic forms.
For low values of the symmetric border subrank, we show that $Q_s$ and $\underline{Q}_s$ coincide on cubics and quartics. Specifically:
\begin{thm} \label{thm:LowRank}
Assume that $K=\CC$. Suppose that 
$f \in S^3 V$ and 
$\underline{Q}_s(f) \leq 3$ or $f \in S^4 V$ and $\underline{Q}_s(f) \leq 2$. Then in fact $Q_s(f)=\underline{Q}_s(f)$.
\end{thm}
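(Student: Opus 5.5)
The plan is to work with the geometric description of $Q_s$ from the introduction, using the compactness of the Grassmannian to reduce the statement to a GIT question about forms in $r\le 3$ (resp.\ $r\le 2$) variables. Write $r=\underline{Q}_s(f)$, with $r \le 3$ for cubics and $r\le 2$ for quartics. If $\varphi\colon K^r\to K^n$ is not injective, then $f\circ\varphi$ is a form in fewer than $r$ essential variables. Such forms lie in a closed subset that does not contain $I_r$, because the condition \eqref{charass} guarantees that $I_r$ genuinely needs $r$ variables. Hence $I_r$ lies in the closure of
\[
\bigcup_{L\in \Gr(r,n)} \GL_r\cdot (f|_L),
\]
where $f|_L$ denotes the restriction of $f$ to the $r$-dimensional subspace $L$, written in some basis.

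By curve selection there are germs $L(t)\to L_0$ in $\Gr(r,n)$ and $B(t)\in\GL_r(K((t)))$ such that $g_t\circ B(t)\to I_r$, where $g_t:=f|_{L(t)}$ and $g_0=f|_{L_0}$. Here the point of using $\Gr(r,n)$ rather than $\Hom$ is that it is projective, so the limit $L_0$ exists. This is also where the difficulty of the general case comes from: $B(t)$ may blow up.

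The key step is an invariant-theory argument. For $r\le3,d=3$ and for $r\le 2,d=4$, the Fermat form $I_r$ is a smooth hypersurface (a smooth plane cubic, or a binary quartic with four distinct roots, or the trivially stable small cases). It is therefore $\SL_r$-stable, and its $\SL_r$-orbit is closed of maximal dimension. After rescaling by a scalar $c(t)$, we may assume $\det B(t)=1$. For homogeneous $\SL_r$-invariants $J_1,J_2$ of the same degree, the ratio $J_1/J_2$ is then constant along each orbit. So $J_1(g_t)/J_2(g_t)=J_1(h_t)/J_2(h_t)$, where $h_t=g_t\circ B(t)$ and $c(t)h_t \to I_r$. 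If $g_0$ is semistable, these ratios converge on both sides, so $g_0$ and $I_r$ have the same image in the projectivised GIT quotient. Since $I_r$ is stable, the fibre through it consists of a single orbit up to scaling, so $g_0\in K^*\cdot\GL_r\cdot I_r = \GL_r \cdot I_r$ (using that $K$ is algebraically closed to absorb scalars). This gives $Q_s(f)\ge r$, and hence equality.

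The main obstacle is the case where $g_0=f|_{L_0}$ is unstable: a ternary cubic with a cusp or worse singularity, or a binary quartic with a root of multiplicity $\ge 3$. For $r=2$ and cubics, the unstable binary cubics are those with a triple root. In this case I would not fix $L_0$, but use the freedom to choose the degenerating family. The plan is to show that if some family of restrictions $g_t$ with unstable limit can be moved by $\GL_r$ onto $I_r$, then there is also a subspace $L_0'$ near the family with semistable, in fact smooth, restriction. Concretely, for ternary cubics I would use the Aronhold invariants $S,T$ and track the $j$-invariant $S^3/T^2$ along $g_t$. It must tend to the value $j(I_3)$, which rules out certain unstable degenerations and allows a perturbation of $L(t)$ inside the cone over $\mathcal V(f)$ to a nearby hyperplane section with the same $j$-invariant that is smooth. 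For binary forms, the analogous step uses the cross-ratio of the roots. In the low-rank range, the Hilbert--Mumford type analysis behind \Cref{thm:HMsubrank} should ensure that any destabilizing one-parameter subgroup can be absorbed into a change of $L(t)$. Making this absorption precise is the step I expect to be hardest.
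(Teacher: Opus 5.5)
\textbf{Verdict: the proposal has a genuine gap.} The unstable-limit case is deferred, and it is the whole content of the theorem.

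Your reduction to a family of sections $g_t=f|_{L(t)}$ with $L(t)\to L_0$ in the Grassmannian is correct. So is your GIT argument in the case where $g_0=f|_{L_0}$ is semistable. But the case you defer, $g_0$ unstable (including $g_0=0$, i.e.\ $L_0$ contained in the hypersurface), is the entire difficulty, and the tool you propose cannot settle it.

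Following $j(g_t)\to j(I_3)$ along one curve is purely local information at $L_0$ about the rational map $\varphi\colon \Gr\dashrightarrow\PP^1$, $L\mapsto j(f|_L)$. When $g_0$ is unstable, all invariants vanish there, and $L_0$ may well be an indeterminacy point of $\varphi$. Since $\Gr$ is normal, the fibre of the graph closure over such a point is connected and positive-dimensional, hence all of $\PP^1$. So \emph{every} $j$-value is a limit along some curve through $L_0$, and your family tells you nothing.

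Even where $\varphi$ is defined at $L_0$, you only learn that $L_0$ lies on the level set $\varphi^{-1}(j(I_3))$. What you actually need is that this level hypersurface meets $\Pi$, i.e.\ that it is not swallowed by $\Gr\setminus\Pi$. For a cubic surface, the obstruction would be, e.g., the hyperplane of planes through a singular point with non-reduced tangent cone, all of whose sections are unstable. That is a global statement about $X$, and no perturbation of $L(t)$ near $L_0$ provides it.

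Appealing to \Cref{thm:HMsubrank} does not help either. In \Cref{ex:Quintic} one has $\QHMs(f)=Q_s(f)=1<2=\underline{Q}_s(f)$, and by your own semistable argument every degenerating family there must have an unstable limit. So there is no general ``absorption'' of destabilizing one-parameter subgroups into a change of $L(t)$. Any proof must use something specific to $(d,r)\in\{(3,3),(4,2)\}$, and your sketch does not identify what that is.

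The paper supplies exactly this global input as \Cref{thm:Cubics}. If $X=\mathcal V(f)$ is normal (resp.\ reduced) and $f$ uses at least $k+2$ variables, then the moduli map $\pi\colon\Pi\to\PP^1$ is surjective. Hence every smooth plane cubic (resp.\ every configuration of four distinct points on $\PP^1$) occurs as a linear section. The proof combines:
\begin{itemize}
\item \Cref{Lem.Technical_varphi_surj}: fibres of a dominant rational map to $\PP^1$ are hypersurfaces, so only the irreducible hypersurfaces in $\Gr\setminus\Pi$ on which $\varphi$ is constant need to be controlled;
\item the classification of normal cubic surfaces with a singular point whose tangent cone is non-reduced, and of triple points of plane quartics, together with explicit invariant computations;
\item an induction on $n$ through hyperplane sections (\Cref{Lem.reduction}).
\end{itemize}
\Cref{thm:LowRank} then follows quickly. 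The condition $\underline{Q}_s(f)\ge r$ forces $X$ to be normal (resp.\ reduced), because the Fermat form is smooth. The case where $f$ uses only $r$ variables is handled by \Cref{lm:max_sym_bordersubrank}.

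Two minor corrections. Binary cubics with a double root are unstable too, since the discriminant generates the invariant ring. And the case $d=3$, $r=2$ needs no GIT at all: $\GL_2\cdot I_2$ is open in $S^3\CC^2$, hence meets every set having $I_2$ in its closure.
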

The theorem above is a consequence of the following much stronger statement. 

\begin{thm} \label{thm:Cubics}
Let $K = \CC$, $(d, k) \in \{(3, 2),(4, 1)\}$, $n \geq k$, 
and let $X \subset \PP^{n+1}$ be a 
hypersurface defined by a homogeneous polynomial $f \in \CC[x_0,\ldots,x_{n+1}]_d$
of degree $d$ such that $X$ is normal when $d = 3$ and $X$ is reduced when $d = 4$. 
Assume that $f$ uses at least $k+2$ variables in the sense that it cannot be put into $\CC[x_0,\ldots,x_k]_d$ by means of an invertible linear coordinate transformation. 
Then the set
\[
\Bigset{ X \cap P}{
\begin{array}{l}
\text{$P \subset \PP^{n+1}$ a projective $k$-dimensional} \\
\text{linear subspace not contained in $X$}
\end{array}}
\]
contains representatives of all isomorphism classes of smooth degree-$d$ hypersurfaces in $\PP^k$.
\end{thm}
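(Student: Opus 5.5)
The plan is to reduce to the smallest ambient dimension and then use the GIT description of the relevant moduli space. For $(d,k)=(3,2)$ this is the $j$-line of plane cubics; for $(d,k)=(4,1)$ it is the $j$-line of binary quartics. In both cases $\cquot{(S^d\CC^{k+1})^{ss}}{\SL_{k+1}}\cong\PP^1$. The stable points are exactly the smooth hypersurfaces, with $j\in\AA^1$. All strictly semistable points (nodal cubics, conic plus secant line, triangles; binary quartics with exactly one double root) map to the single point $j=\infty$. So a semistable section with finite $j$-value is automatically smooth, and it suffices to show that every finite value of $j$ is attained by a semistable section $X\cap P$.

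\textbf{Step 1: reduction.} Replace $X$ by $X\cap L$ for a general linear subspace $L$ of dimension $k+1$: a $\PP^3$ for cubics, a $\PP^2$ for quartics. By Bertini-type results (Seidenberg's theorem for normality, and the reducedness analogue), $X\cap L$ is again a normal cubic surface, respectively a reduced plane quartic curve. The condition that $f$ uses at least $k+2$ variables should also survive a general restriction. If the restriction to a general $L$ were a cone, then varying $L$ would force the vertices to fill out a linear space that is a vertex of $X$ itself, making $X$ a cone over a section in $k+1$ variables. So from now on $n=k+1$.

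For cubics, a normal cubic surface in $\PP^3$ that is not a cone has only isolated rational double points, of which there is a finite list by the Bruce--Wall classification. For quartics, a reduced plane quartic that is not four concurrent lines has finitely many singular points.

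\textbf{Step 2: a pencil of sections.} Take a general line $\ell$ in the dual space $(\PP^{k+1})^\ast$, that is, a pencil of planes (resp.\ lines) $P_t$. The rational map $t\mapsto j(X\cap P_t)$ is defined wherever $X\cap P_t$ is semistable, and it extends to a morphism $\PP^1\to\PP^1$. Suppose this morphism is nonconstant and $\ell$ meets the unstable locus $U\subset(\PP^{k+1})^\ast$ only at points where the extended value is $\infty$ (in particular, if $\ell\cap U=\emptyset$). Then surjectivity of a nonconstant morphism $\PP^1\to\PP^1$ gives every finite $j$ at a semistable $P_t$, hence at a smooth section, as required.

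Nonconstancy is handled as follows. If $j$ were constant on a general pencil, it would be constant on a dense open subset of the dual space. But we can produce sections with $j=\infty$: for cubics, a general tangent plane gives a nodal section; for quartics, a general tangent line gives a double root. We can also produce smooth sections: general planes and lines give smooth sections. Both kinds occur because $X$ is not a cone, so $j$ cannot be constant.

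\textbf{Step 3: the unstable locus (main obstacle).} Unstable sections are cuspidal cubics, conic plus tangent line, three concurrent lines, and non-reduced cubics; for quartics, binary quartics with a root of multiplicity $\geq 3$.

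\begin{itemize}[leftmargin=*]
\item At smooth points of $X$, the unstable sections come from tangent planes at parabolic points, from lines through Eckardt points, and from flex or hyperflex tangent lines. These form loci of codimension $\geq2$, which a general $\ell$ avoids.
\item Singular points of $X$ can contribute divisors to $U$. For cubics, all planes through a $D_4$, $D_5$, or $E_6$ point give sections with a rank-$\leq 1$ tangent cone; for quartics, all lines through a triple point are unstable. In these cases I would argue directly, using the normal forms. Along such a divisor, the sections degenerate in a way where the limit in the pencil of the $j$-invariant is $\infty$, equivalently the stable reduction acquires a node. This can be checked by writing $X\cap P_t$ in local coordinates for each singularity type from the finite list.
\item If a case fails this check, the fallback is to bypass the pencil argument entirely for that normal form. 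One writes an explicit family of sections and computes $j$ via the Aronhold invariants $S,T$ (resp.\ the binary-quartic invariants $I,J$), showing that the resulting rational function hits every finite value on smooth sections.
\end{itemize}

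I expect this case analysis over the singular normal cubic surfaces, especially $D_4$, $D_5$, $E_6$, and over quartics with triple points, to be the bulk of the work.
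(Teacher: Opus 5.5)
\textbf{There is a genuine gap in Step~3.} Your reduction to $n=k+1$ agrees with the paper, and so does your overall strategy: view $P\mapsto j(X\cap P)$ as a rational map $\varphi$ on the dual space, and study the divisorial part of the unstable locus. That divisorial part consists of the hyperplanes $\mathcal{H}_q$ of planes through a singular point $q$ with rank-one tangent cone, resp.\ lines through a triple point. The problem is your central claim that along $\mathcal{H}_q$ the limiting $j$-value is $\infty$ because ``the stable reduction acquires a node''. This is false in the main cases.

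\emph{The $D_4$ surface.} Take $x_0x_1^2+x_2^3+x_3^3$ and the plane $\epsilon x_0+a_2x_2+a_3x_3=0$. In the chart $x_3=1$ it cuts out $\ell x_1^2=\epsilon C$, with $\ell=a_2x_2+a_3$ and $C=x_2^3+1$. Putting $y=\ell x_1$, this is birational to $y^2=\epsilon\,\ell C$. Its $j$-invariant is that of the four roots of $\ell C$, independent of $\epsilon$. So the stable limit is a smooth elliptic curve, and $\varphi|_{\mathcal{H}_q}$ is nonconstant with generically finite values. This is the paper's computation $\varphi|_{\mathcal{H}}=[-(a_2a_3)^3:2^4(a_2^3-a_3^3)^2]$.

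\emph{Other cases.} For a quartic with an ordinary triple point, the value at the line $ax_1+x_2=0$ is $\eta\bigl((ax_1+x_2)f_3\bigr)$, which varies with $a$. For $E_6$, and for quartics with $f_3=x_1^3$, the limiting value is constant but finite. For $E_6$ it is $j=0$: three of the four branch points coalesce along the cube roots of unity. In the quartic cases it is $j=0$ or $1728$, matching the curve $\mathcal{K}$ in the paper.

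\textbf{Consequence for Step~2.} The pencil argument cannot be run as formulated. Every pencil meets the divisor $\mathcal{H}_q$, the extended map takes a finite value there, and that value may be attained on the pencil only at the unstable point. Two ingredients are missing.
\begin{enumerate}
\item The quantifiers must be reversed. Fix the target $c\in\AA^1$ first, then show the fibre hypersurface $\overline{\varphi^{-1}(c)}$ is not contained in the bad locus. This can only fail if some bad divisor has $\varphi|_{\mathcal{H}_q}\equiv c$. This is exactly \Cref{Lem.Technical_varphi_surj}, and it settles $D_4$ and the ordinary triple point once non-constancy on $\mathcal{H}_q$ is checked.
\item When $\varphi|_{\mathcal{H}_q}$ is constant and finite, you need a separate family of semistable sections realising that value. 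The paper uses the curve $\mathcal{K}$ for quartics with $f_3=x_1^3$, and explicit Weierstrass families of plane sections for $D_5$ and $E_6$.
\end{enumerate}
Your ``fallback'' points toward the second ingredient but names no families and checks nothing. So the part you yourself call the bulk of the proof is both mispredicted and left undone.

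A minor point: if the quartic is four non-concurrent lines, there are no general tangent lines to produce $j=\infty$. Use lines through a point where exactly two components meet, as the paper does.
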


The condition that $f$ uses at least $k+2$ variables is needed as otherwise $X \cap P$ is isomorphic to a fixed degree-$d$ hypersurface in $\PP^k$ for all choices of $P$ such that $X \cap P$ is smooth.
The normality of $X$ in the case $(d, k) = (3, 2)$ 
is needed, since otherwise its singular locus has codimension $1$, and then $X \cap P$ is always singular; by an analogous argument the reducedness of $X$ is used in case 
$(d, k) = (4, 1)$. Details are provided in \Cref{sec:cubic}, see in particular 
\Cref{re:LX}, \Cref{Lem.L_X_non-empty} and \Cref{Lem.Pi_non-empty}.

The choice of the pairs $(3, 2)$ and $(4, 1)$ for $(d, k)$ comes from the fact that
in these cases the moduli space of smooth degree-$d$ hypersurfaces in $\PP^k$ 
is particularly small, namely isomorphic to the affine line 
$\AA^1$, see \Cref{Subsection.mpduli}.

\section{Properties of the unit tensor}

In this short section, we establish a few fundamental properties of the unit tensor $I_r=e_1^d+ \dots + e_r^d \in S^d K^r$. These properties are well known when the characteristic of $K$ is zero or $>d$, but require a little extra care under our weaker assumption \eqref{charass}. Recall that the {\em slice rank} of $f \in S^d V$ is the minimal codimension of a linear subspace of $\PP V^*$ on which $f$ vanishes identically; see, e.g. \cite{Ballico23}.

\begin{prop} \label{prop:Char}
Under our standing assumption \eqref{charass}, the unit tensor $I_r$ has the following properties:
    \begin{enumerate}
    \item $I_r$ is not contained in $S^d U$ for any proper subspace $U$ of $K^r$;
    \item the slice rank of $I_r$ equals $\lceil r/2 \rceil$; 
    \item the orbit $\GL_r \cdot I_r \subseteq S^d K^r$ has dimension $r^2$.
    \end{enumerate}
\end{prop}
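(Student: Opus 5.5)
The common tool for all three parts is a Frobenius reduction. Let $p$ be the characteristic exponent of $K$ and write $d=qm$, where $q$ is a power of $p$ and $\gcd(m,p)=1$ (in characteristic $0$, take $q=1$). Assumption \eqref{charass} says exactly that $m\ge 3$. We identify $S^\bullet K^r$ with the polynomial ring $K[x_1,\dots,x_r]$, with $x_i=e_i$. Then
\[
I_r=x_1^d+\dots+x_r^d=g^q, \qquad g:=x_1^m+\dots+x_r^m .
\]
Since $q$-th powers are additive and injective on forms, the following hold:
\begin{itemize}
\item $I_r$ and $g$ have the same zero set;
\item an element of $\GL_r$ fixes $I_r$ if and only if it fixes $g$;
\item $g$ is a Fermat form whose degree $m\ge3$ is prime to $p$, so $\mathcal V(g)\subset\PP^{r-1}$ is smooth.
\end{itemize}

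\emph{Part (1).} Suppose $I_r\in S^dU$ with $U\subsetneq K^r$. Choosing coordinates adapted to $U$, there is a nonzero vector $v$ with $I_r(x+tv)=I_r(x)$ identically in $t$. Expanding,
\[
(x_i+tv_i)^d=(x_i^q+t^qv_i^q)^m,
\]
so the coefficient of $t^q$ in $I_r(x+tv)$ is $m\sum_i v_i^q x_i^{q(m-1)}$. This is nonzero because $p\nmid m$ and $v\neq0$, a contradiction.

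\emph{Part (2).} For the upper bound, pair up coordinates via $x_{2i}=\zeta x_{2i-1}$ with $\zeta^d=-1$. This gives a subspace of dimension $\lfloor r/2\rfloor$ on which $I_r$ vanishes, hence codimension $\lceil r/2\rceil$. For the lower bound, it suffices to treat the smooth hypersurface $\mathcal V(g)$, which has degree $m\ge2$. Suppose it contains $\PP L$ with $\dim L=k$. The partials $\partial_i g=mx_i^{m-1}$ restricted to $L$ have no common zero on $\PP L$, by smoothness. They vanish on $L$ after pairing with $L$, because $\PP L\subset\mathcal V(g)$ forces each tangent hyperplane to contain $\PP L$. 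Hence they define a morphism
\[
\PP L\to\PP(L^\perp)\cong\PP^{r-k-1}
\]
given by base-point-free forms of positive degree $m-1$. Such a morphism is finite, so $k-1\le r-k-1$, i.e.\ $k\le r/2$. This gives slice rank at least $\lceil r/2\rceil$.

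\emph{Part (3).} This is the main obstacle. We must show that the stabilizer of $g$ in $\GL_r$ is finite; its identity component $G^\circ$ is what matters. In characteristic $0$ a Lie algebra or Hessian computation suffices, but in characteristic $p$ both can degenerate (e.g.\ $p\mid m-1$). The plan is to show that $G^\circ$ preserves each line $Ke_i$ and then act diagonally.

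To see that $G^\circ$ preserves the lines $Ke_i$, characterize them intrinsically. For a point $x$ with $g(x)\neq0$, consider the truncated Taylor expansion
\[
g(x+v)-g(x)=\sum_i\bigl((x_i+v_i)^m-x_i^m\bigr)
\]
in degrees $1,\dots,m-1$. For $x=e_i$ this part is a polynomial in the single linear form $v_i$. For $x$ with at least two nonzero coordinates it should not be a polynomial in one linear form. The reason is that each summand has nonzero coefficients $mx_i^{m-1}$ on $v_i$ and $mx_i$ on $v_i^{m-1}$, and $\binom{m}{m-1}=m\ne0$. The delicate case is when $m-1$ is a power of $p$, where one compares the degree-$1$ and degree-$(m-1)$ parts explicitly. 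This shows that the finitely many lines $Ke_i$ are permuted, hence fixed, by $G^\circ$. Then $G^\circ$ is diagonal, and $\mathrm{diag}(t_i)$ fixes $g$ only if $t_i^m=1$; so $G^\circ$ is trivial, the stabilizer is finite, and $\dim \GL_r\cdot I_r=r^2$.

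If this intrinsic characterization proves too messy, the fallback is to cite finiteness of the linear automorphism group of a smooth hypersurface of degree $\ge3$ (Matsumura--Monsky). For $r=2$ one argues directly: $g$ then has $m\ge3$ distinct roots on $\PP^1$, so its stabilizer is finite. For $r=3$, $m=4$ one uses the finiteness of automorphism groups of smooth curves of genus $\ge2$.
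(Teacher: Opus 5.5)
Your parts (1) and (2) are correct, but they take different routes from the paper. In (1) you read off the $t^q$-coefficient $m\sum_i v_i^q x_i^{q(m-1)}$ directly; this is nonzero because $p\nmid m$ and the monomials $x_i^{q(m-1)}$ are distinct since $m\ge 2$. The paper instead argues that a cone of degree $>1$ is singular. In (2) you bound $\dim L$ via the finite gradient morphism $\PP L\to\PP(L^\perp)$, whereas the paper uses a Birch-style estimate on the singular locus of $\sum_i \ell_i h_i$.

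The genuine gap is in (3), and it starts from a false premise: the Lie algebra computation does \emph{not} degenerate when $p\mid m-1$. The $\epsilon$-coefficient of $(\id+\epsilon A)\cdot g$ is $m\sum_i e_i^{m-1}(Ae_i)$, which only needs $m\neq 0$ in $K$. Since $m-1\ge 2$, the subspaces $e_i^{m-1}K^r$ are independent, so $A=0$. A trivial Lie algebra of the scheme-theoretic stabilizer forces the stabilizer to be finite in every characteristic. Only the Hessian, with its factor $m(m-1)$, degenerates. This short argument is exactly the paper's proof of (3).

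The replacement argument you propose fails precisely in the case you call delicate. Take $p=3$ and $d=m=4$, which is allowed by \eqref{charass}, and $x=e_1+e_2$. Then $g(x)=2\neq 0$. Since $4\equiv 1$ and $\binom{4}{2}\equiv 0$ modulo $3$, the truncated expansion is $\sum_i (x_i^3v_i+x_iv_i^3)=(v_1+v_2)+(v_1+v_2)^3$. This is a polynomial in the single linear form $v_1+v_2$, so no comparison of the degree-$1$ and degree-$(m-1)$ parts can exclude it. More generally, for $m=p^a+1$ every $x$ with $g(x)\neq 0$ whose nonzero coordinates have ratios in $\mathbb{F}_{p^{2a}}^{*}$ passes your test.

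A finer invariant will not fix this. These are the Hermitian cases, and there the stabilizer genuinely moves the coordinate lines. For $p=3$, $r=2$, the matrix $\alpha\begin{psmallmatrix}1&1\\1&-1\end{psmallmatrix}$ with $\alpha^4=-1$ fixes $x_1^4+x_2^4$ and sends $Ke_1$ to $K(e_1+e_2)$. Hence no stabilizer-invariant property can single out exactly the lines $Ke_i$.

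Your plan can only be salvaged by weakening the claim to ``the set of lines passing the test is finite.'' That is true by the computation above, and then $G^\circ$ fixes each such line, in particular each $Ke_i$. The Matsumura--Monsky fallback is a heavy external citation. It would also need separate treatment of the cases that theorem excludes, namely plane cubics and quartic surfaces, i.e.\ $(r,m)=(3,3),(4,4)$; plane quartics are not among them.
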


\begin{proof}
    Write $d=d' \cdot p^k$ where $p$ is the characteristic exponent of $K$, $k$ is a nonnegative integer, and $d'$ is an integer that maps to a nonzero element of $K$ and that by \eqref{charass} is $>2$. Now $I_r=(I_r')^{p^k}$ where $I_r':=e_1^{d'} + \cdots + e_r^{d'}$. The fact that $d'$ is nonzero in $K$ implies that the hypersurface $Z$ in $\PP (K^r)^*$ defined by $I_r'$ is smooth. 

    For (1), let $U \subseteq K^r$ be a subspace such that $I_r \in S^d U$. Then also $I_r' \in S^{d'} U$. Now if $U$ is strictly smaller than $K^r$, then $Z$ is a cone, and the fact that its degree $d'$ is $>1$ implies that $Z$ is singular, a contradiction. 

    In (2), $\leq$ follows from the fact that for all $i$ with $2i \leq r$ the binary form $e_{2i-1}^d + e_{2i}^d$ has a linear factor, and so does the form $e_{r}^d$ if $r$ is odd. Now $I_r$ vanishes identically on the linear space defined by these $\lceil r/2 \rceil$ linear forms. For $\geq$ assume that $I_r$ vanishes identically on the space defined by linear forms $\ell_1,\ldots,\ell_s \in K^r$. Then so does $I_r'$, and it follows that $I_r'=\ell_1 h_1 + \dots + \ell_s h_s$ for suitable $h_i \in S^{d'-1} K^r$. Using the Leibniz rule, we see that every partial derivative of $I_r'$ lies in the ideal generated by the $\ell_i$ and the $h_i$ together, and so the singular locus of $Z$ has codimension at most $2s$. Since $Z$ is smooth, we find that $2s \geq r$. (This argument is essentially due to Birch, and also shows that the {\em strength} of $I_r$ is equal to $\lceil r/2 \rceil$, see \cite[page 3]{Lampert24}.)

    For (3), it suffices to show that the orbit $\GL_r \cdot I_r'$ has dimension $r^2$ (this orbit is homeomorphic to $\GL_r \cdot I_r$ via the map $f \mapsto f^{(p^k)}$). Since $\dim(\GL_r)=r^2$, this follows when the Lie algebra of the stabiliser in $\GL_r$ of $I_r'$ is $\{0\}$. For every $A$ in the Lie algebra of the stabiliser, the coefficient of $\epsilon$ in 
    \[ (e_1+\epsilon Ae_1)^{d'} + \dots + (e_r+\epsilon Ae_r)^{d'}\]
    equals zero. This coefficient equals 
    \[ d' e_1^{d'-1} (Ae_1) + \dots + d' e_r^{d'-1} (Ae_r). \]
    Now $d'$ is nonzero in $K$, and since $d'>2$ the linear subspaces $e_1^{d'-1} \cdot K^r,\ldots,e_r^{d'-1} \cdot K^r \subseteq S^{d'}K^r$ are linearly independent. Hence for said coefficient to be zero, we need that $Ae_i=0$ for all $i$, so that $A=0$, as desired.
\end{proof}

We remark that only for (3) we use that $d'>2$; for (1) and (2) it suffices that $d'>1$.

\section{Generic symmetric subrank}
\label{sec:gen_sym_subrank}
The main goal of this section is to proof  \Cref{thm:generic_symsubrank}. This is done in 3 steps: Showing that there exists a generic symmetric subrank $Q_s(n)$ and then giving an upper bound and a lower bound on $Q_s(n)$.
We start by establishing that the generic symmetric subrank is well-defined.

Let $u_1, \dots , u_r$ be linearly independent vectors in $V$, and set $U := \langle u_1,\ldots,u_r \rangle_K$. We identify $I_r$ with $u_1^d + \dots + u_r^d$. We fix another subspace $W$ satisfying $V = U   \oplus W$, so that $S^d V = \bigoplus _{e = 0}^d S^e U \cdot S^{d-e}W$. Given $f \in S^d V$, we define $f_U$ as the component of $f$ in the summand $S^d U$. Define the affine subspace
\[
X_r := \{f  \in S^d V \mid f_U = I_r\}.
\]

Let $\Phi _r $ be the map defined by applying linear transformations on elements of $X_r$,
\begin{align*}
\Phi _r : \GL (V)  \times X_r &\to S^d V \\ 
(g,f) & \mapsto g \cdot f .
\end{align*} 

Let $\mathcal{C}_{\geq r} \subseteq S^d V$ be the set of homogeneous polynomials with symmetric subrank at least $r$:
\[
\mathcal{C}_{\geq r} := \{f \in S^d V \mid Q_s(f) \geq r\}.
\]
The following arguments are symmetric analogues of \cite[\S2.1]{DMZ24}.
\begin{lem}
\label{lm:image_phi_r}
The image of $\Phi _r$ equals $\mathcal{C}_{\geq r} $.
\end{lem}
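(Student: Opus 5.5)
We will prove the two inclusions separately. Throughout we use the projection $\pi_U : V = U \oplus W \to U$ along $W$. Under the identification of $U$ with $K^r$ via $u_i \mapsto e_i$, this projection sends $f$ to its component $f_U$. Indeed, every summand $S^eU\cdot S^{d-e}W$ with $e<d$ is killed by $\pi_U$, because each simple tensor there contains a factor from $W$. So for every $f\in S^dV$ we have $\pi_U f = f_U$.

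For $\operatorname{im}\Phi_r \subseteq \mathcal{C}_{\geq r}$, take $g\in\GL(V)$ and $f\in X_r$. Then $\pi_U f = f_U = I_r$, so $Q_s(f)\ge r$. Moreover $\pi_U\circ g^{-1}$ is a linear map $V\to U\cong K^r$ sending $g\cdot f$ to $I_r$. Hence $Q_s(g\cdot f)\ge r$.

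For the reverse inclusion, let $h\in\mathcal{C}_{\geq r}$ and pick a linear map $\varphi:V\to K^r\cong U$ with $\varphi h = I_r$. The key step is to show that $\varphi$ is surjective. Its image $\varphi(V)\subseteq U$ satisfies $I_r=\varphi h\in S^d(\varphi(V))$, and \Cref{prop:Char}(1) forbids $\varphi(V)$ from being a proper subspace. This is the only place where assumption \eqref{charass} enters, and it is also the only nontrivial point of the argument.

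Given surjectivity, $\ker\varphi$ has dimension $n-r=\dim W$. Choose a linear isomorphism $g^{-1}:V\to V$ with the following two properties:
\begin{enumerate}
\item it maps $\ker\varphi$ onto $W$;
\item it agrees with $\varphi$ modulo $W$, i.e.\ $\pi_U\circ g^{-1}=\varphi$.
\end{enumerate}
To build it, pick a complement $C$ of $\ker\varphi$. On $C$, let $g^{-1}$ be the isomorphism $\varphi|_C: C\to U$. On $\ker\varphi$, let $g^{-1}$ be any isomorphism onto $W$. By construction $\pi_U\circ g^{-1}=\varphi$ on both pieces.

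Now set $f:=g^{-1}\cdot h$. Then $f_U=\pi_U f=\varphi h=I_r$, so $f\in X_r$ and $h=\Phi_r(g,f)$. This proves $\mathcal{C}_{\geq r}\subseteq\operatorname{im}\Phi_r$.
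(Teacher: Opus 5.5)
Your proof is correct and follows the paper's argument: both inclusions are handled the same way, with surjectivity of $\varphi$ coming from part (1) of \Cref{prop:Char}, after which $\varphi$ is factored as $\pi_U \circ g^{-1}$ with $g \in \GL(V)$. The only difference is that you construct this factorization explicitly via a complement of $\ker\varphi$, where the paper simply asserts that it exists.
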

\begin{proof}
We have $X_r \subseteq \mathcal{C}_{\geq r}$ since for any $f \in X_r$ and for $\pi_U:V \to U$ the projection with kernel $W$ we have $\pi_U f=I_r$. Now $\im(\Phi_r) \subseteq \mathcal{C}_{\geq r}$ follows from the fact that $Q_s$ is invariant under $\GL(V)$.

For the opposite inclusion $\mathcal{C}_{\geq r}  \subseteq \im (\Phi  _r) $, let $f \in \mathcal{C}_{\geq r} $. Then there exists a linear map $\varphi :V \to U$ with $\varphi f= I_r$. Since $\varphi f \in S^d(\im(\varphi))$, part (1) of \Cref{prop:Char} implies that $\varphi$ is surjective, so we can write $\varphi$ as $\pi_U \circ \psi$, where $\psi \in \GL(V)$. We then have $h:=\psi \cdot f \in X_r$, and hence $f=\psi^{-1}\cdot h \in \im (\Phi _r)$.
\end{proof}

\begin{prop}
Given $d$ and an $n$-dimensional vector space $V$, there exists a unique $r$, such that for all $f \in  Y$, where $Y$ is a Zariski-dense open subset of $S^d V$, we have $Q_s(f) = r.$ We call this $r$ the \emph{generic symmetric subrank} of symmetric tensors in $S^d V$ and denote it by $Q_s(n)$.
\end{prop}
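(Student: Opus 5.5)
We prove this by a constructibility argument modelled on \cite[\S2.1]{DMZ24}, using \Cref{lm:image_phi_r}. The map $\Phi_r\colon \GL(V)\times X_r \to S^dV$ is a morphism of varieties, and $\GL(V)\times X_r$ is irreducible: $\GL(V)$ is an open subset of an affine space and $X_r$ is an affine space. By Chevalley's theorem the image $\mathcal{C}_{\geq r}$ is constructible. Since it is the image of an irreducible variety, its Zariski closure $\overline{\mathcal{C}_{\geq r}}$ is irreducible.

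Next we record the nesting
\[
S^dV=\mathcal{C}_{\geq 1}\supseteq \mathcal{C}_{\geq 2}\supseteq\dots\supseteq \mathcal{C}_{\geq n}\supseteq \mathcal{C}_{\geq n+1}=\emptyset,
\]
where we restrict to nonzero $f$ in the first term, which is harmless for density. Monotonicity holds because composing $\varphi$ with the projection $K^r\to K^{r-1}$ forgetting the last coordinate sends $I_r$ to $I_{r-1}$. The equality $\mathcal{C}_{\geq 1}=S^dV\setminus\{0\}$ holds since a nonzero form takes a nonzero value $c$ at some vector, and we may rescale using algebraic closedness. Emptiness of $\mathcal{C}_{\geq n+1}$ follows from \Cref{prop:Char}(1), which forces $\varphi$ to be surjective.

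Let $r$ be the largest integer with $\overline{\mathcal{C}_{\geq r}}=S^dV$; such an $r\ge 1$ exists. A constructible dense subset of the irreducible space $S^dV$ contains a dense open subset, so $\mathcal{C}_{\geq r}$ contains a nonempty open $Y_1$. By maximality, $\overline{\mathcal{C}_{\geq r+1}}$ is a proper closed subset, so its complement $Y_2$ is open and nonempty. Then $Y:=Y_1\cap Y_2$ is dense open because $S^dV$ is irreducible, and every $f\in Y$ has $Q_s(f)=r$.

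For uniqueness, suppose dense open sets $Y,Y'$ had constant values $r\neq r'$. Then $Y\cap Y'$ would be nonempty by irreducibility, which is a contradiction. No step here is a real obstacle. The only points needing care are the irreducibility of the closure of the image, which is what forces a single value generically, and the use of \Cref{prop:Char}(1) inside \Cref{lm:image_phi_r}, which is already done.
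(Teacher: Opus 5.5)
Your proof is correct and is essentially the paper's argument: $\mathcal{C}_{\geq r}=\im(\Phi_r)$ is constructible by Chevalley's theorem and \Cref{lm:image_phi_r}, and then irreducibility of $S^dV$ does the rest. The only cosmetic difference is that the paper partitions $S^dV=\bigsqcup_i(\mathcal{C}_{\geq i}\setminus\mathcal{C}_{\geq i+1})$ and picks the unique dense piece, whereas you take the largest $r$ with $\mathcal{C}_{\geq r}$ dense. One small correction to your closing remark: the irreducibility of $\overline{\mathcal{C}_{\geq r}}$ is never used; what forces a single generic value is the irreducibility of the ambient space $S^dV$.
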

\begin{proof}
By Chevalley's theorem and the lemma above, the set $\mathcal{C}_i  = \mathcal{C}_{\geq i} \setminus \mathcal{C}_{\geq i+1}$ is constructible, and we have $S^d V = \bigsqcup _i \mathcal{C}_i$. As $S^d V$ is irreducible and the $\mathcal{C}_i$ are constructible, precisely one $\mathcal{C}_i$ is dense, and this contains a dense open subset $Y$ of $S^dV$.
\end{proof}

\subsection{An upper bound on \texorpdfstring{$Q_s (n)$}{the generic symmetric subrank}} The upper bound uses the same idea as in \cite{DMZ24}, namely bound 
$\dim(\mathcal{C}_{\geq r})$ from above using the fiber dimension theorem for the map $\Phi _r$.
\begin{lem}
The dimension of $\mathcal{C}_{\geq r} $ is bounded from above by
\[
\dim (\mathcal{C} _{\geq r}) \leq \binom{n+d-1}{d} - \binom{r+d-1}{d} + nr.
\]
\end{lem}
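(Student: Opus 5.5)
We bound $\dim(\mathcal{C}_{\geq r})$ through the map $\Phi_r$, using the fiber dimension theorem: by \Cref{lm:image_phi_r}, $\mathcal{C}_{\geq r}$ is the image of $\Phi_r$. The domain $\GL(V)\times X_r$ is irreducible of dimension
\[
n^2 + \dim S^dV - \dim S^dU = n^2 + \binom{n+d-1}{d} - \binom{r+d-1}{d},
\]
since $X_r$ is an affine subspace. It remains to show that every nonempty fiber of $\Phi_r$ has dimension at least $n^2 - nr$. Then the image, or rather its closure, has dimension at most the displayed number minus $n^2-nr$, which is exactly the claimed bound.

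To bound the fibers from below, we exhibit a group acting on $\GL(V)\times X_r$ that preserves the fibers of $\Phi_r$ and acts with finite (or trivial) stabilizers. Let $H\subseteq \GL(V)$ be the subgroup of $h$ with $h(u)-u\in W$ for all $u\in U$ and $h(W)=W$. Concretely, in block form with respect to $V=U\oplus W$, $h$ is the identity on the $U\to U$ block, the $W\to U$ block is zero, the $U\to W$ block is arbitrary, and the $W\to W$ block is invertible. So $H\cong \Hom(U,W)\rtimes\GL(W)$, which has dimension $r(n-r)+(n-r)^2 = n(n-r)=n^2-nr$.

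The key check is that $H$ preserves $X_r$. The projection $\pi_U$ with kernel $W$ satisfies $\pi_U\circ h=\pi_U$ for $h\in H$: for $u\in U$ we get $\pi_U(h u)=u$, and for $w\in W$ we get $hw\in W$. The $S^dU$-component $f_U$ is exactly $\pi_U f$ viewed inside $S^dU$. Hence $(h\cdot f)_U=\pi_U(hf)=\pi_U f=f_U=I_r$, so $h\cdot f\in X_r$.

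Then $H$ acts on $\GL(V)\times X_r$ by $h\cdot(g,f)=(gh^{-1},h\cdot f)$, and $\Phi_r$ is constant on orbits. This action is free on the first factor, so each orbit has dimension $\dim H=n^2-nr$. Every nonempty fiber contains an orbit, so all fibers have dimension $\geq n^2-nr$.

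Finally we apply the fiber dimension theorem to the dominant morphism $\GL(V)\times X_r\to\overline{\im\Phi_r}$ of irreducible varieties. This gives $\dim\overline{\mathcal{C}_{\geq r}} = \dim(\text{domain})-(\text{generic fiber dimension})$, and since every fiber has dimension $\ge n^2-nr$, the bound follows. The only point needing care is the invariance $\pi_U\circ h=\pi_U$, which is a direct check; I expect no serious obstacle. One could in fact also include the finite stabilizer of $I_r$ in $\GL(U)$, but it is not needed.
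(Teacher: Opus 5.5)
Your proposal is correct and is essentially the paper's proof: the paper applies the same fiber-dimension argument to $\Phi_r$. It uses the subgroup $L=\{g\in\GL(V)\mid g(W)=W \text{ and } g \text{ induces the identity on } V/W\}$, which coincides with your $H$, together with the points $(g^{-1},g\cdot h)$ in the fiber. Your explicit check that $\pi_U\circ h=\pi_U$, so that $H$ preserves $X_r$, fills in a step the paper leaves implicit.
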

\begin{proof}
The fiber dimension theorem gives 
\[ \dim  (\mathcal{C}_{\geq r}) = \dim (\GL (V) \times X_r) - \dim (\text{general fiber of }\Phi _r). \]
The dimension of $X_r$ equals $\dim(S^d V)-\dim(S^d U)$, so that 
\[
\dim (\GL (V) \times X_r) = n^2 +  \binom{n+d-1}{d} - 
\binom{r+d-1}{d}.
\]
We now bound the fiber dimension from below. Given an $f \in \mathcal{C}_{\geq r}$, we have $f = g \cdot h$ for $g \in \GL (V) $ and $h \in X_r$. The varieties $\Phi _r^{-1} (h) $ and $\Phi _r^{-1} (f) $ are isomorphic, as they differ by the action of the invertible element $g$. So it suffices to bound $\dim (\Phi _r^{-1} (h))$ from below. We define an $n(n-r)$-dimensional subgroup of $\GL (V) $ by
\[
L := \{ g \in \GL (V) \mid g(W)=W \text{ and $g$ induces the identity on $V/W$} \} .
\]
For all $g \in L$, we have $(g^{-1}, g \cdot h)  \in \Phi _r^{-1} (h)$, so that this fiber has dimension at least $n(n-r)$. Subtracting this from the expression for $\dim(\GL(V) \times X_r)$ above proves the lemma.
\end{proof}

To show the upper bound, we also need the following lemma from \cite{DMZ24}. The proof is identical to the one given there.
\begin{lem}[Symmetric version of {\cite[Lemma 2.3]{DMZ24}}]
\label{lm:dimension_of_Cr}
The generic symmetric subrank equals
\begin{flalign*}
&&Q_s(n) = \max \left\{ r \mid \dim (\mathcal{C}_{\geq r}) = \dim (S^d V)  \right\}. && \square \end{flalign*}
\end{lem}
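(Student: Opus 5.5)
The plan is to use two facts: generic subrank is the value of $Q_s$ on a dense open set, and the loci $\mathcal{C}_{\geq r}$ are nested, constructible, and all of the same type.

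First I record that the $\mathcal{C}_{\geq r}$ are decreasing in $r$. If $f$ restricts to $I_{r+1}$ via $\varphi$, then composing with the coordinate projection $K^{r+1}\to K^r$ forgetting $e_{r+1}$ shows $f$ restricts to $I_r$. By \Cref{lm:image_phi_r} each $\mathcal{C}_{\geq r}$ is the image of the irreducible variety $\GL(V)\times X_r$. It is therefore an irreducible constructible set, and its closure is an irreducible closed subset of $S^dV$.

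Let $r_0:=Q_s(n)$ and let $Y$ be the dense open set on which $Q_s\equiv r_0$.

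For the inequality $\le$: we have $Y\subseteq \mathcal{C}_{\geq r_0}$, so $\mathcal{C}_{\geq r_0}$ is dense and $\dim \mathcal{C}_{\geq r_0}=\dim S^dV$. Hence $r_0$ lies in the set on the right, and $r_0\le\max$.

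For the inequality $\ge$: suppose $\dim\mathcal{C}_{\geq r}=\dim S^dV$. Since $S^dV$ is irreducible, $\overline{\mathcal{C}_{\geq r}}$ is a closed subset of full dimension, so it equals $S^dV$ and $\mathcal{C}_{\geq r}$ is dense. A dense constructible set contains a dense open subset $Y'$ (Chevalley). Then $Y\cap Y'\neq\emptyset$ by irreducibility, and any $f$ in it satisfies $r_0=Q_s(f)\ge r$. So every $r$ in the set is at most $r_0$, which gives $\max\le r_0$.

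The set on the right is nonempty and bounded because $r\le n$ is forced: by \Cref{prop:Char}(1) no $f$ can restrict to $I_r$ for $r>n$. So the maximum exists, and combining the two inequalities gives equality.

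The main obstacle is only the step from full dimension to density. It needs the irreducibility of $S^dV$ together with the fact that a constructible set is dense exactly when it contains a dense open set. Both are standard, so no genuine difficulty is expected, and the argument is identical to that of \cite[Lemma 2.3]{DMZ24}.
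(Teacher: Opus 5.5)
Your proof is correct and follows the standard argument the paper defers to (that of \cite[Lemma 2.3]{DMZ24}). A full-dimensional constructible subset of the irreducible space $S^dV$ is dense and so contains a dense open set, which must meet the dense open set $Y$ where $Q_s\equiv Q_s(n)$; conversely $Y\subseteq\mathcal{C}_{\geq Q_s(n)}$.
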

\begin{prop}
\label{prop:upper_bound}
Let $n = \dim (V)$. The generic symmetric subrank of $S^dV$ satisfies 
\[
Q_s(n) \leq (d! \cdot n)^{1/(d-1)}.
\]
\end{prop}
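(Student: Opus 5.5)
Write $r := Q_s(n)$. By \Cref{lm:dimension_of_Cr}, $\dim(\mathcal{C}_{\geq r}) = \dim(S^dV) = \binom{n+d-1}{d}$. Comparing with the upper bound of the preceding lemma, the two binomial coefficients $\binom{n+d-1}{d}$ cancel, and we obtain the inequality
\[
\binom{r+d-1}{d} \leq nr.
\]
So the whole proof reduces to turning this inequality into $r \leq (d!\cdot n)^{1/(d-1)}$.

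For this I will bound the binomial coefficient from below by $r^d/d!$:
\[
\binom{r+d-1}{d} = \frac{r(r+1)\cdots(r+d-1)}{d!} \geq \frac{r^d}{d!},
\]
since each factor $r+i$ is at least $r$. Combining the two displays gives $r^d/d! \leq nr$. If $r \geq 1$, which certainly holds for $n\geq 1$, dividing by $r$ yields $r^{d-1} \leq d!\cdot n$. Taking $(d-1)$-st roots then gives the claim; this is legitimate because $d\geq 3$ under \eqref{charass}, so $d-1\geq 2>0$.

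No step here seems to be a real obstacle, since the heavy lifting was done in the dimension lemma and in \Cref{lm:dimension_of_Cr}. The only point needing care is the correct use of \Cref{lm:dimension_of_Cr}: that $r=Q_s(n)$ itself satisfies $\dim \mathcal{C}_{\geq r}=\dim S^dV$. This holds because $\mathcal{C}_{\geq r}$ contains the dense set $\mathcal{C}_r$.
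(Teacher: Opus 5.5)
Your proposal is correct and follows the paper's proof exactly: you combine \Cref{lm:dimension_of_Cr} with the dimension bound on $\mathcal{C}_{\geq r}$ to get $\binom{r+d-1}{d} \leq nr$, and then use $r^d/d! \leq \binom{r+d-1}{d}$. Your checks that $r \geq 1$, so that dividing by $r$ is allowed, and that $d \geq 3$ under \eqref{charass}, are details the paper leaves implicit.
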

\begin{proof}
Using both lemmas from above, for $r = Q_s(n)$, we have 
\[ \dim (\mathcal{C}_{\geq r} ) = \dim (S^d V) = \binom{n+d-1}{d} \] and also 
$\dim (\mathcal{C}_{\geq r}) \leq \binom{n+d-1}{d} - \binom{r+d-1}{d} + nr$. We combine this and find 
\begin{align*}
\binom{n+d-1}{d} & \leq \binom{n+d-1}{d} - \binom{r+d-1}{d} + nr \\
\Leftrightarrow \; \binom{r+d-1}{d} & \leq nr.
\end{align*}
Using the inequality $r^d/d! \leq \binom{r+d-1}{d}$, we obtain $r^{d-1}/d! \leq n$ and hence the desired inequality.
\end{proof}
For order 3 tensors, we get a more precise upper bound.
\begin{prop}
Let $\dim (V) = n$. The generic symmetric subrank of $S^3 V$ satisfies
\[
Q_s(n) \leq \lfloor \sqrt{6n + 1/4}-3/2 \rfloor.
\]
\end{prop}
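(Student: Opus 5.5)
We reuse the inequality already obtained in the proof of \Cref{prop:upper_bound}: with $r = Q_s(n)$, combining \Cref{lm:dimension_of_Cr} with the dimension bound on $\mathcal{C}_{\geq r}$ gives
\[
\binom{r+d-1}{d} \leq nr .
\]
For $d=3$ we do not apply the crude estimate $\binom{r+2}{3}\geq r^3/6$. Instead we evaluate the binomial coefficient exactly.

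Since $r \geq 1$ for $n \geq 1$, we may divide by $r$. The inequality becomes
\[
\frac{(r+2)(r+1)r}{6} \leq nr \quad\Longleftrightarrow\quad (r+1)(r+2) \leq 6n \quad\Longleftrightarrow\quad r^2+3r+2-6n \leq 0 .
\]

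Next we solve this quadratic inequality in $r$. Its roots are $\frac{-3 \pm \sqrt{9-4(2-6n)}}{2}$, and the discriminant simplifies to $9 - 8 + 24n = 24n+1$. Because the leading coefficient is positive, the inequality forces
\[
r \leq \frac{-3+\sqrt{24n+1}}{2} = \sqrt{6n+\tfrac14}-\tfrac32 .
\]
Finally, $r=Q_s(n)$ is an integer, so we may take the floor of the right-hand side, which gives the stated bound.

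There is no real obstacle here. The only points needing care are that division by $r$ is legitimate, since $Q_s(n)\ge 1$ when $n\geq1$ and $f\neq 0$ generically, and that the algebra $\sqrt{24n+1}/2=\sqrt{6n+1/4}$ is checked.
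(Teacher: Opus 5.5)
Your proposal is correct and takes the same route as the paper: start from the inequality $\binom{r+2}{3}\le nr$ obtained for $r=Q_s(n)$ in the proof of \Cref{prop:upper_bound}, cancel $r$, and solve $(r+1)(r+2)\le 6n$ to get $r\le\sqrt{6n+1/4}-3/2$, then take the floor. Your explicit justification for dividing by $r$, namely $r\ge 1$, is a small point the paper leaves implicit; for $r=0$ the bound would hold trivially anyway.
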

\begin{proof}
As in the proof of \Cref{prop:upper_bound}, for $r = Q_s(n)$, we get 
\begin{align*}
& {n+2 \choose 3} \leq {n+2 \choose 3} - {r+2 \choose 3} + nr \\
\Leftrightarrow \;&{r+2 \choose 3} \leq nr \\
\Leftrightarrow \;& \frac{(r+2)(r+1)r}{6}  \leq nr \\
\Leftrightarrow \;& r \leq \sqrt{6n + 1/4}-3/2.
\qedhere
\end{align*}
\end{proof}

\begin{re}
    It would be interesting to see if the methods from \cite{PSS-exactvaluesgenericsubrank} can be adapted to show that the upper bound in the previous proposition is the exact value of $Q_s(n)$.
\end{re}

\begin{re}
Given a symmetric tensor $f\in S^d V$ with $\dim (V) = n$, we can look at the growth rate in $n$ for fixed $d$, but also in $d$ for fixed $n$. We know that the generic {\em Waring rank} equals $ \lceil \frac{1}{n} {n+d -1 \choose d} \rceil$, at least in characteristic zero and except in a small number of cases \cite{Alexander_Hirschowitz}. This grows for fixed $n$ in $d$. On the other hand, the symmetric {\em subrank} for a nonzero tensor lies in $\{1,\ldots,n\}$, so the generic symmetric subrank for fixed $n$ is bounded. The inequality ${r+d-1 \choose d} \leq nr$ shows that for fixed $n$ and $d \geq 2n $, the generic symmetric subrank of $S^d (K^{n})$ equals 1.
\end{re}
\subsection{A lower bound on \texorpdfstring{$Q_s(n)$}{the generic symmetric subrank}}
\label{sec:gen_sym_subrank_lower_bound}
The following result is a combination of well known facts about the rank of differentials and that in a similar way as in \cite[Lemma 3.3]{DMZ24}, one can show that $\im ((d\Phi _r)_{(g,f)}) = g \cdot \im ((d\Phi _r)_{(\id,f)}) $. We forgo the proof, as their argument for the subrank also works for the symmetric subrank.

\begin{cor}[Symmetric version of {\cite[Corollary 3.4]{DMZ24}}]
If for some $f \in X_r$ we have 
\[
\im ((d\Phi _r)_{(\id,f)})  = S^dV,
\]
then $Q_s(n) \geq r$. Conversely, if the characteristic of $K$ equals $0$ and 
$Q_s(n) \geq r$, then the set of $f$ with the property above is open and dense in the affine space $X_r$. 
\end{cor}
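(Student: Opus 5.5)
The plan is the standard argument via dominance of $\Phi_r$ and generic smoothness, adapted from \cite[Lemma 3.3, Corollary 3.4]{DMZ24}.

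\emph{First direction.} Suppose $(d\Phi_r)_{(\id,f)}$ is surjective onto $S^dV$ for some $f\in X_r$.
\begin{enumerate}
\item The domain $\GL(V)\times X_r$ is smooth and irreducible, and the target $S^dV$ is smooth. A morphism between such varieties whose differential is surjective at some point is dominant. To see this, let $Z=\overline{\im \Phi_r}$ and $y=\Phi_r(\id,f)$. The image of the differential lands in $T_{y}Z$, which would have dimension at most $\dim Z$ if $y$ were a smooth point of $Z$. Since the set of points where the differential has full rank is open, we may move to a nearby point of the domain whose image is a smooth point of $Z$. Then $\dim Z=\dim S^dV$, so $Z=S^dV$.
\item By \Cref{lm:image_phi_r}, the image of $\Phi_r$ is $\mathcal{C}_{\geq r}$, so $\mathcal{C}_{\geq r}$ is dense. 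It is constructible, hence contains a dense open subset.
\item \Cref{lm:dimension_of_Cr} (or the definition of generic symmetric subrank) then gives $Q_s(n)\geq r$.
\end{enumerate}
The equivariance $\im((d\Phi_r)_{(g,f)})=g\cdot\im((d\Phi_r)_{(\id,f)})$ is what lets us transport the rank condition to any point of a fiber. It follows by differentiating $\Phi_r(g\,h,f)=g\cdot\Phi_r(h,f)$ at $h=\id$.

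\emph{Converse, in characteristic $0$.} Suppose $Q_s(n)\geq r$.
\begin{enumerate}
\item Then $\mathcal{C}_{\geq r}\supseteq\mathcal{C}_{\geq Q_s(n)}$ is dense, so $\Phi_r$ is dominant.
\item By generic smoothness in characteristic $0$, there is a dense open subset $O\subseteq\GL(V)\times X_r$ on which $d\Phi_r$ is surjective.
\item The rank of $(d\Phi_r)_{(\id,f)}$ is lower semicontinuous in $f$, so the locus $\{f\in X_r : (d\Phi_r)_{(\id,f)}\text{ surjective}\}$ is open in $X_r$.
\item This locus is nonempty. Pick $(g,f)\in O$. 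By the equivariance formula, surjectivity at $(g,f)$ is equivalent to surjectivity at $(\id,f)$, because $g$ acts invertibly on $S^dV$. Hence $f$ lies in the locus.
\item Since $X_r$ is an affine space, hence irreducible, a nonempty open subset is dense.
\end{enumerate}

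\emph{Main obstacle.} The key technical point is the dominance criterion in the first direction in arbitrary characteristic. Openness of the full-rank locus together with density of smooth points of $Z$ handles it cleanly. A minor subtlety is that $\Phi_r(\GL(V)\times X_r)$ is constructible but perhaps not closed, so we must work with its closure $Z$ throughout. Everything else is formal.
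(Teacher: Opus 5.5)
Your proposal is correct and follows essentially the route the paper indicates, since the paper omits the proof and defers to \cite[Corollary 3.4]{DMZ24}. In both, surjectivity of the differential at one point gives dominance of $\Phi_r$ and hence density of $\mathcal{C}_{\geq r}$; for the converse, generic smoothness in characteristic $0$, lower semicontinuity of rank, and the equivariance $\im((d\Phi_r)_{(g,f)}) = g\cdot\im((d\Phi_r)_{(\id,f)})$ produce the open dense locus in $X_r$ (the equivariance is only actually needed in this converse direction).
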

The space $\Hom (V,V) = \mathfrak{gl} (V)$ is the tangent space of $\GL(V)$ at $\id \in \GL(V)$ and
\[
Y_r := \{f \in S^d V\mid f_U=0\} = \bigoplus_{e=0}^{d-1} S^e U \cdot S^{d-e} W 
\] is the tangent space of $X_r$ at any point ($X_r$ is an affine space).

The contraction map 
\begin{align*}
    V^\ast \times S^d V &\to S^{d-1}V\\
    (y, f ) &\mapsto f(y, \cdot ),
\end{align*} is determined by $f(y,\cdot )= \sum _i y(v_i) v_1 \cdots v_{i-1}v_{i+1}\cdots v_d$ for simple $f = v_1\cdots v_d$.  
\begin{prop} \label{prop:Derivative}
The differential of $\Phi _r$ at the point $(\id,f) \in \GL (V) \times X_r$ is the linear map 
\[
(d \Phi _r) _{(\id,f)} : \Hom (V,V)\times Y_r  \to S^dV, 
\]
determined by 
\[
(y \otimes u,h) \mapsto  f(y ,\cdot ) u   +  h 
\]
for $y \otimes u \in V^\ast \otimes V = \Hom (V,V) .$
\end{prop}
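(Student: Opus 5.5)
The plan is to compute the differential directly from the definition. Since $\Phi_r(g,f)=g\cdot f$ is bilinear-like in nature, $(d\Phi_r)_{(\id,f)}$ splits as the sum of the partial differentials in the two factors. That is,
\[
(d\Phi_r)_{(\id,f)}(A,h) = \frac{d}{d\epsilon}\Big|_{\epsilon=0} (\id+\epsilon A)\cdot(f+\epsilon h),
\]
computed over the dual numbers $K[\epsilon]/(\epsilon^2)$. This is legitimate because $\GL(V)$ is open in $\Hom(V,V)$ and $X_r=f+Y_r$ is an affine space with tangent space $Y_r$. Expanding modulo $\epsilon^2$ gives
\[
(\id+\epsilon A)\cdot(f+\epsilon h) \equiv f + \epsilon\bigl(A\cdot_{\mathrm{Lie}} f + h\bigr),
\]
where $A\cdot_{\mathrm{Lie}} f$ denotes the linear term of $(\id+\epsilon A)\cdot f$. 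The term $\epsilon^2 A\cdot h$ is dropped. So the differential is $(A,h)\mapsto A\cdot_{\mathrm{Lie}}f+h$, and it remains to identify the Lie algebra action.

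Both $A\mapsto A\cdot_{\mathrm{Lie}} f$ and $f\mapsto A\cdot_{\mathrm{Lie}} f$ are linear, and the contraction $(y,f)\mapsto f(y,\cdot)$ is bilinear. It therefore suffices to check the claim for rank-one $A=y\otimes u$, meaning $A(v)=y(v)u$, and for simple $f=v_1\cdots v_d$; the general case follows by linearity, since simple tensors span $S^dV$. For such $A$ and $f$,
\[
(\id+\epsilon A)\cdot v_1\cdots v_d=\prod_{i=1}^d \bigl(v_i+\epsilon\, y(v_i)u\bigr)\equiv v_1\cdots v_d+\epsilon\sum_{i=1}^d y(v_i)\, v_1\cdots \widehat{v_i}\cdots v_d\, u ,
\]
and the $\epsilon$-coefficient is precisely $f(y,\cdot)\,u$ by the defining formula of the contraction. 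Adding $h$ gives the stated formula.

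The only point needing care is whether the contraction is well defined on $S^dV$ rather than $V^{\otimes d}$, and whether the computation is valid in positive characteristic. The formula $\sum_i y(v_i)\prod_{j\ne i}v_j$ is symmetric in the $v_i$, so it descends to the quotient. Also, no division by $d$ or factorials occurs in the calculation, so it works in arbitrary characteristic. This is a minor check rather than a real obstacle.
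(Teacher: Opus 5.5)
Your proposal is correct and matches the paper's proof: both compute the $\epsilon$-coefficient of $\Phi_r(\id+\epsilon A, f+\epsilon h)$ over the dual numbers, reduce by linearity to $A=y\otimes u$ and simple $f=v_1\cdots v_d$, and read off $f(y,\cdot)\,u+h$. You first isolate the term $A\cdot_{\mathrm{Lie}} f$, which is linear in $f$, and only then pass to simple tensors (which need not lie in $X_r$). This is a slightly cleaner way to phrase the same linearity step that the paper invokes.
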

\begin{proof}
    We compute the coefficient of $\epsilon$ in $\Phi_r (\id + A \epsilon, f + h \epsilon)$, where $A\in \Hom (V,V)$ and $h \in Y_r $ are elements in the tangent spaces of $\GL (V) $ and $X_r$. By the linearity of the differential and the linearity of $\Phi _r$ in the second argument, we can restrict the computations to the simple elements $A =  y \otimes u$ and $ f= v_1 \cdots v_d.$ We compute modulo $\epsilon ^2$
    \begin{align*}
        \Phi_r &(\id + (y \otimes u) \epsilon, v_1 \cdots v_d + h \epsilon) = \\ &  (v_1 \cdots v_d) + y(v_1 ) v_2 \cdots v_d u \epsilon  + \dots + v_1 \cdots v_{d-1} y(v_d) u \epsilon + h \epsilon,
    \end{align*}    
    which gives the desired differential.
\end{proof}

The differential is a linear map, so we have 
\begin{align*}
\im ((d\Phi _r)_{(\id,f)}) &=  (d\Phi _r)_{(\id,f)}(\Hom (V,V) \times \{0\} ) + (d\Phi _r)_{(\id,f)}(\{0\} \times Y_r )\\
& = (d\Phi _r)_{(\id,f)}(\Hom (V,V) \times \{0\} ) + Y_r.
\end{align*}
Therefore, to ensure that the differential is surjective, we need to show that the space $(d\Phi _r)_{(\id,f)}(\Hom (V,V) \times \{0\} )$ projects surjectively onto $S^d U$. We will now show that, if $r$ is not too large, then we can choose $f$ so that $(d\Phi _r)_{(\id,f)}(\Hom (W,U) \times\{0\} )$ {\em equals} $S^dU$;  here $\Hom(W,U)$ is regarded as a subspace of $\Hom(V,V)$ by extending a linear map $\varphi:W \to U$ with zero on $U$.
\begin{cor}
\label{cor:Q_s(n)_geq_r_if_span_equals_SdU}
Let $h_1,\dots ,h_{n-r}$ be elements of $S^{d-1} U$. If $Uh_1 + \dots + Uh_{n-r} = S^d U$, then  $Q_s(n) \geq r$.
\end{cor}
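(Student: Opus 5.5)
The plan is to apply the Corollary stated just before (the symmetric version of \cite[Corollary 3.4]{DMZ24}) in its first, characteristic-free direction. To do so I exhibit an explicit $f \in X_r$ at which $(d\Phi_r)_{(\id,f)}$ is surjective. The natural candidate uses the given $h_j$ as the $S^{d-1}U \cdot W$-component of $f$.

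Fix a basis $w_1,\dots,w_{n-r}$ of $W$ and set
\[
f := I_r + w_1 h_1 + \dots + w_{n-r} h_{n-r} \in S^dV .
\]
Since each $w_j h_j$ lies in the summand $S^{d-1}U \cdot W$, the $S^dU$-component of $f$ is $I_r$. Hence $f \in X_r$.

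Next, for each $j$ let $y_j \in V^*$ be the linear form with $y_j|_U = 0$ and $y_j(w_k) = \delta_{jk}$. For $u \in U$, the element $y_j \otimes u$ is exactly the map in $\Hom(W,U) \subseteq \Hom(V,V)$ sending $w_j \mapsto u$ and killing $U$ and the other $w_k$. By \Cref{prop:Derivative}, its image under the differential is $f(y_j,\cdot)\,u$. I compute the contraction on simple tensors:
\begin{itemize}
\item $I_r(y_j,\cdot) = 0$, because every factor of $I_r$ lies in $U$ and $y_j$ vanishes on $U$;
\item for $k \neq j$, $(w_k h_k)(y_j,\cdot) = 0$, because every factor lies in $U \cup \{w_k\}$;
\item $(w_j h_j)(y_j,\cdot) = y_j(w_j)\, h_j = h_j$, since only the factor $w_j$ contributes.
\end{itemize}
Therefore $(d\Phi_r)_{(\id,f)}(y_j \otimes u, 0) = u h_j$.

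It follows that the image of $\Hom(W,U) \times \{0\}$ contains $U h_1 + \dots + U h_{n-r}$, which equals $S^dU$ by hypothesis. By the decomposition of the image displayed before the statement, the full image also contains $Y_r$. Since $S^dV = S^dU \oplus Y_r$, the differential at $(\id,f)$ is surjective, and the Corollary yields $Q_s(n) \geq r$.

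The only delicate point is the bookkeeping in the contraction: one must check that the identification $V^* \otimes V = \Hom(V,V)$ matches the convention in \Cref{prop:Derivative}, so that $y_j \otimes u$ is the map $w_j \mapsto u$. One must also check that $I_r$ and the $w_k h_k$ with $k \neq j$ contribute nothing. Everything else is formal.
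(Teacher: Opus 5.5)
Your proof is correct and is essentially the paper's argument: take a basis $w_i$ of $W$ with dual forms $y_i$ vanishing on $U$, put the $h_i$ into the $W\cdot S^{d-1}U$-component of $f$, use \Cref{prop:Derivative} to get $(d\Phi_r)_{(\id,f)}(y_i\otimes u,0)=uh_i$, and conclude that the image contains $S^dU$ and hence, together with $Y_r$, all of $S^dV$. The one difference is in your favour: the paper takes $f=w_1h_1+\dots+w_{n-r}h_{n-r}$, which has $f_U=0$ and so is not literally in $X_r$. Your $f=I_r+\sum_i w_ih_i$ does lie in $X_r$, and adding $I_r$ changes nothing, since $I_r(y_i,\cdot)=0$ for forms $y_i$ vanishing on $U$.
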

\begin{proof}
    Let $w_1,\ldots,w_{n-r}$ be a basis of $W$, $y_1,\ldots,y_{n-r}$ the dual basis of $W^*$, extended by zero on $U$ to linear functions on all of $V$, and set $f:=w_1 h_1 + \dots + w_{n-r} h_{n-r}$. For $u \in U$ and $i \in \{1,\ldots,n-r\}$, \Cref{prop:Derivative} yields
    \[ (d\Phi_r)_{(\id,f)}(y_i \otimes u,0)=uh_i.\]
    Since the symmetric tensors $uh_i$ span $S^d U$ as $u$ and $i$ vary, we have 
    \[ (d\Phi_r)_{(\id,f)} (\Hom(W,U) \times \{0\}) = S^d U,\]
    as desired.
\end{proof}
We now use a result from \cite{Hochster87} to find a lower bound on the generic 
symmetric subrank.
\begin{prop}
\label{prop:lower_bound}
The generic symmetric subrank of $S^dV$ satisfies 
\[
1 \leq \liminf_{n \to \infty} \frac{Q_s(n)}{(d! n)^{1/(d-1)}}.
\]
\end{prop}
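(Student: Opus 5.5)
The plan is to combine \Cref{cor:Q_s(n)_geq_r_if_span_equals_SdU} with the theorem of Hochster--Laksov \cite[Theorem 1]{Hochster87}. Work in the polynomial ring $S=\bigoplus_e S^e U$, which has $r=\dim U$ variables. Let $h_1,\dots,h_m \in S^{d-1}U$ be sufficiently general forms of degree $d-1$. The Hochster--Laksov theorem says the multiplication map
\[ U \otimes K^m \to S^d U,\quad (u,i)\mapsto u h_i \]
has maximal rank. Equivalently, $\dim(Uh_1+\dots+Uh_m)=\min\{rm,\binom{r+d-1}{d}\}$, apart from the finitely many exceptional cases in their statement, which will not matter asymptotically. (I will check whether \cite{Hochster87} needs characteristic zero. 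If it does, I will use a semicontinuity argument: the rank of this map is given by minors with integer coefficients. I will also check the allowable range of $m$; their result is stated for $m$ general forms of degree $t=d-1$ and the degree $t+1$ piece.)

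Given $n$, I choose $r$ as large as possible subject to $r(n-r)\ge \binom{r+d-1}{d}$, and take $m=n-r$. Maximal rank then gives $Uh_1+\dots+Uh_{n-r}=S^dU$, and \Cref{cor:Q_s(n)_geq_r_if_span_equals_SdU} yields $Q_s(n)\ge r$.

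It remains to check the asymptotics. Set $r_n:=\lfloor (1-\epsilon)(d!\,n)^{1/(d-1)}\rfloor$ for fixed $\epsilon>0$. Then $\binom{r_n+d-1}{d} = \frac{r_n^d}{d!}(1+O(1/r_n))$, which is at most $(1-\epsilon)^{d-1}(1+o(1))\, n r_n$. On the other side, $r_n(n-r_n)=nr_n(1-o(1))$, because $r_n=O(n^{1/(d-1)})=o(n)$ when $d\ge 3$. Hence for $n$ large the required inequality $r_n(n-r_n)\ge\binom{r_n+d-1}{d}$ holds, so $Q_s(n)\ge r_n$. This gives $\liminf_n Q_s(n)/(d!\,n)^{1/(d-1)}\ge 1-\epsilon$ for every $\epsilon>0$, which is the claim.

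The main obstacle is applying Hochster--Laksov correctly. Their theorem concerns the linear syzygies of $m$ general forms of degree $t$, and I must make sure it covers the whole range of $m$ used here, namely $m=n-r\approx r^{d-1}/d!$, which is large compared with $r$. I also need to identify and exclude the exceptional cases, and handle the characteristic hypotheses. If the theorem is stated only for $m\le \dim S^{t}U$ or in a restricted range, I would reduce to a subfamily: surjectivity with fewer forms implies surjectivity with more forms, so it suffices to find the smallest $m$ with $rm\ge\binom{r+d-1}{d}$. That $m$ is at most $\dim S^{d-1}U$, so it lies in their range.
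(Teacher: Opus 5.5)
Your proposal is correct and is essentially the paper's proof. Hochster--Laksov for $n-r$ general forms in $S^{d-1}U$, combined with \Cref{cor:Q_s(n)_geq_r_if_span_equals_SdU}, gives $Q_s(n)\ge r$ whenever $r(n-r)\ge\binom{r+d-1}{d}$, and your $\epsilon$-argument with $r_n=\lfloor(1-\epsilon)(d!\,n)^{1/(d-1)}\rfloor$ is an equivalent, somewhat cleaner replacement for the paper's analysis of the largest root of $\frac{(x+d-1)^{d-1}}{d!\,n}+\frac{x}{n}=1$.

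Drop your positive-characteristic contingency, though. Transferring maximal rank from characteristic $0$ through integer minors goes the wrong way: a minor that is nonzero over $\mathbb{Q}$ can vanish identically mod $p$, so semicontinuity only carries maximal rank from characteristic $p$ to characteristic $0$. This does not affect your main line, which, like the paper, simply applies Hochster--Laksov directly.
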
 
\begin{proof}
If $(n-r)r \geq \binom{r+d-1}{d}$, then for sufficiently 
general $h_1, \ldots, h_{n-r} \in S^{d-1}U$ we have 
$\sum_{i=1}^{n-r} U h_i = S^d U$ by
\cite[Theorem~1]{Hochster87} and a hence $Q_s(n) \geq r$ by
\Cref{cor:Q_s(n)_geq_r_if_span_equals_SdU}.
Thus we must ensure that
\begin{equation}
\label{eq:alexander_hirschowitz_condition_for_r}
(n-r)r \geq  {r+d-1 \choose d}, \text{ which is equivalent to } n-r \geq \frac{1}{d}\binom{r+d-1}{d-1}.
\end{equation}
Using the inequality 
\[
{r+d-1 \choose d-1} \leq \frac{(r+d-1)^{d-1}}{(d-1)!}
\]
we see that it suffices that 
\[ n \geq \frac{(r+d-1)^{d-1}}{d!} + r.\]
Let $r(n)$ be the largest solution of $\frac{(x+ d-1)^{d-1}}{d! n} + \frac{x}{n} = 1$.
Then 
\[
    q(n) \coloneqq \frac{r(n) + d-1}{(d! n)^{1/(d-1)}}
\]
is the largest solution of $x^{d-1} + a_1(n) x - a_0(n) = 0$, where 
$a_1(n) = \frac{d!}{ (d! n)^{(d-2)/(d-1)}}$ and $a_0(n) = -(1 + \frac{d-1}{n})$.
As $x^{d-1} + a_1(n) x - a_0(n) = 0$ has exactly one solution $\geq 0$,
we conclude that $q(n)$ tends to a root of
$x^{d-1} -1 = 0$. Hence,  
\[
    1 = \lim_{n \to \infty} q(n) = 
    \lim_{n \to \infty} \frac{r(n)}{(d! n)^{1/(d-1)}} ,
\]
which shows the result.
%}
%and hence that 
%\[ r \leq (d! n)^{1/(d-1)} - o(n^{1/(d-1)}) \quad \text{ for } n \to \infty. %\qedhere\]
\end{proof}
For $d=3$ we can compute a more precise bound:
\begin{prop}
The generic symmetric subrank of $S^3V$ satisfies 
\[
Q_s(n) \geq \lfloor \sqrt{6n+73/4}-9/2 \rfloor
\]
\end{prop}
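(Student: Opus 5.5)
The plan is to specialize the argument of \Cref{prop:lower_bound} to $d=3$, but to keep the exact binomial coefficient instead of the asymptotic estimate. By \cite[Theorem~1]{Hochster87} and \Cref{cor:Q_s(n)_geq_r_if_span_equals_SdU}, $Q_s(n) \geq r$ holds for every $r$ with $1 \le r \le n$ satisfying condition \eqref{eq:alexander_hirschowitz_condition_for_r}:
\[
(n-r)r \geq \binom{r+2}{3} = \frac{(r+2)(r+1)r}{6}.
\]
For $d=3$ the forms $h_i$ lie in $S^2U$, and \cite{Hochster87} applies to sufficiently general forms of that degree, so nothing new is needed on the input side.

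The key step is to rewrite this inequality for $r \geq 1$. Dividing by $r$ gives $n - r \geq (r^2+3r+2)/6$, which is equivalent to
\[
r^2 + 9r + 2 - 6n \leq 0 .
\]
The left-hand side is a quadratic in $r$ with positive leading coefficient, and its larger root is
\[
\frac{-9+\sqrt{81-8+24n}}{2} = \sqrt{6n+73/4}-9/2 .
\]
Hence every integer $r$ with $1 \le r \le \sqrt{6n+73/4}-9/2$ satisfies the condition. In particular $r_0 := \lfloor \sqrt{6n+73/4}-9/2 \rfloor$ satisfies it whenever $r_0 \geq 1$, and then $Q_s(n) \geq r_0$.

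Two bookkeeping points remain. First, the application of \Cref{cor:Q_s(n)_geq_r_if_span_equals_SdU} needs $r \le n$. This is automatic, because the inequality $n-r \geq (r+1)(r+2)/6 > 0$ forces $r < n$. Second, when $r_0 \leq 0$ the claimed bound is trivial, since $Q_s(n) \geq 1$ for nonzero forms and the generic form is nonzero. (One checks $r_0 \geq 0$ already for $n \geq 1$.)

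I expect no real obstacle. The only points needing care are the harmless division by $r$, which requires $r \geq 1$, and confirming that the hypotheses of \cite[Theorem~1]{Hochster87}, namely $n-r$ general quadrics in $r$ variables generating $S^3U$ in degree $3$ as soon as the dimension count allows, are met exactly as in the proof of \Cref{prop:lower_bound}.
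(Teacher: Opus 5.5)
Your proof is correct and follows the paper's argument: you use Hochster--Laksov and \Cref{cor:Q_s(n)_geq_r_if_span_equals_SdU} to reduce to $(n-r)r \geq \binom{r+2}{3}$, then solve the resulting quadratic inequality $r^2+9r+2-6n \leq 0$. Your extra bookkeeping, namely $r \geq 1$ for the division, $r<n$, and the trivial case $r_0 = 0$, fills in details the paper leaves implicit.
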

\begin{proof}
We recall that we have to show that \Cref{eq:alexander_hirschowitz_condition_for_r} is satisfied.
\begin{align*}
& (n-r)r \geq {r+2 \choose 3}\\
\Leftrightarrow \; & n-r \geq \frac{1}{6} r^2 + \frac{1}{2}r + \frac{1}{3} \\
\Leftrightarrow \;&r \leq \sqrt{6n+73/4}-9/2. \qedhere
\end{align*}
\end{proof}

\begin{proof}[Proof of \Cref{thm:generic_symsubrank}.]
The asymptotic behaviour of $Q_s(n)$ follows from the upper bound in \Cref{prop:upper_bound} and the lower bound in \Cref{prop:lower_bound}.
\end{proof}

\section{Generic symmetric border subrank}
\label{sec:gen_sym_border_subrank}
The goal of this section is to give an upper bound on the generic symmetric border subrank. The growth rate of this upper bound will be the same as for the symmetric subrank, but unlike in that case we do not know the precise multiplicative constant. The proof are adapted versions of the proof in \cite{BCDR25-bordersubrank}.

First, we need the following result to see that the generic symmetric border subrank is well-defined for all spaces of symmetric tensors.
\begin{prop}[Symmetric version of {\cite[Proposition 1]{BCDR25-bordersubrank}}]
For $n,r,V$, the set $\mathcal{D}_r \subseteq S^dV$ of tensors of border subrank precisely $r$ is a constructible set. Therefore, the same holds for the sets $\mathcal{D}_{<r}$ and $\mathcal{D}_{\geq r}$. \hfill $\square$
\end{prop}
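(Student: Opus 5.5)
We express $\mathcal{D}_{\geq r}=\{f \in S^dV \mid \underline{Q}_s(f)\geq r\}$ as the image of a constructible set under a morphism and then apply Chevalley's theorem; the other cases follow by Boolean operations. Consider the incidence set
\[
Z_r := \Bigset{(f,g)\in S^dV \times S^d K^r}{g \in \overline{\{\varphi f \mid \varphi \in \Hom(V,K^r)\}}},
\]
so that $\mathcal{D}_{\geq r}$ is the projection to the first factor of $Z_r \cap (S^dV \times \{I_r\})$. The main obstacle is that orbit closures (here, closures of images of $\Hom(V,K^r)$) do not obviously vary constructibly with $f$. We handle this with a family argument, as in the proof of \cite[Proposition 1]{BCDR25-bordersubrank}.

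\textbf{Step 1.} Let $M:S^dV\times\Hom(V,K^r)\to S^dV\times S^dK^r$ be the morphism $(f,\varphi)\mapsto (f,\varphi f)$, and let $\overline{\im M}$ be the Zariski closure of its image. The fibre of $\overline{\im M}$ over $f$ contains the closure $C_f:=\overline{\{\varphi f\}}$, but it may be larger. To control this, we stratify.

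\textbf{Step 2.} Since $S^dV$ is noetherian, we use generic flatness, or equivalently the constructibility of closures in families, in the form: there is a finite partition $S^dV=\bigsqcup_j S_j$ into locally closed irreducible pieces such that, over each $S_j$, the closure of $\im M|_{S_j\times \Hom}$ inside $S_j\times S^dK^r$ has fibre over every $f\in S_j$ equal to $C_f$.

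To prove this, we argue by noetherian induction. Take an irreducible locally closed $S$ and the closure $\overline{\im M_S}$. By Chevalley, $\im M_S$ is constructible, so it contains a dense open subset $O$ of $\overline{\im M_S}$. Shrinking $S$ to a dense open subset $S'$, we may assume that both $\overline{\im M_S}$ and $\overline{\im M_S}\setminus O$ are flat over $S'$. Flatness over $S'$ forces the fibrewise closures to be compatible, so on $S'$ the fibre of the closure equals the closure of the fibre. We then recurse on $S\setminus S'$.

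An alternative that avoids flatness is to use the curve-selection characterization of border phenomena, as in \cite{BCDR25-bordersubrank}. Over $K$ algebraically closed, $I_r\in C_f$ if and only if there exist $\varphi(t)\in \Hom(V,K^r)\otimes K[t,t^{-1}]$ with $\varphi(t)f=I_r+t\,(\cdots)$ in $S^dK^r[t]$. Moreover, the degrees in $t^{-1}$ can be bounded uniformly in terms of $n,r,d$ by a degree-bound argument for orbit-closure membership. With such a bound $N$, $\mathcal{D}_{\geq r}$ is the projection of the variety of tuples $(f,\varphi_{-N},\dots,\varphi_{N'})$ satisfying polynomial equations, hence constructible by Chevalley. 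Establishing this uniform bound is the hard step, and I would use the stratification route above if the bound is not readily available.

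\textbf{Step 3.} On each $S_j$, the set $\{f\in S_j \mid (f,I_r)\in \overline{\im M_{S_j}}\}$ is closed in $S_j$, since it is the preimage under $f\mapsto (f,I_r)$ of a closed set. Hence $\mathcal{D}_{\geq r}$ is a finite union of locally closed sets, and is therefore constructible. Finally, $\mathcal{D}_r=\mathcal{D}_{\geq r}\setminus\mathcal{D}_{\geq r+1}$ and $\mathcal{D}_{<r}=S^dV\setminus\mathcal{D}_{\geq r}$ are constructible as well.
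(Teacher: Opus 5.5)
Your argument is correct. The paper gives no proof of its own for this statement. It only asserts, with a $\square$, that the proof of \cite[Proposition~1]{BCDR25-bordersubrank} carries over to the symmetric setting, so your stratification argument supplies a self-contained proof of what the paper leaves implicit. What you prove is the general fact that fibrewise closures of a constructible family are constructible, applied to $(f,\varphi)\mapsto(f,\varphi f)$. Nothing specific to $I_r$ or to symmetric tensors enters, which fits the paper's claim that the non-symmetric argument transfers verbatim. The uniform degree-bound route is not needed and can be dropped.

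Two points deserve tightening.

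First, the identification of $\mathcal{D}_{\geq r}$ with $\{f \mid I_r\in C_f\}$ uses monotonicity in $r$. Suppose $I_{r'}$ lies in the closure of $\{\varphi f\mid \varphi\in\Hom(V,K^{r'})\}$ and $r\le r'$. The coordinate projection $K^{r'}\to K^r$ acts continuously and sends $I_{r'}$ to $I_r$, so $I_r\in C_f$.

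Second, the sentence ``flatness forces the fibrewise closures to be compatible'' is the heart of the proof and should be made precise. Put $F:=\overline{\im M_S}$ and $B:=F\setminus O$.
\begin{itemize}
\item Since $O$ is dense in $F$, the set $B$ contains no irreducible component of $F$. Hence every component of $B$ has dimension strictly less than any component $F_i$ of $F$ containing it.
\item Remove from $S$ the closures of the images of the non-dominant $F_i$. The fibre dimension theorem then gives a dense open $S'\subseteq S$ such that, for $f\in S'$, every irreducible component of $F_f$ is a component of some $(F_i)_f$ of dimension $\dim F_i-\dim S$.
\item Over the same $S'$, the set $(B\cap F_i)_f$ has dimension strictly less than $\dim F_i-\dim S$.
\item Thus no component of $F_f$ lies in $B_f$, so $O_f$ is dense in $F_f$. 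This gives $F_f=\overline{O_f}\subseteq C_f\subseteq F_f$.
\end{itemize}
Generic flatness of both $F$ and $B$ yields the same local dimension comparison, but it is this comparison, not flatness as such, that does the work.
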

Let $f \in S^dV$ be a tensor with symmetric border subrank $\geq r$. Then $I_r \in \overline{\GL (V)  \cdot S^d V}$, where we recall that $I_r$ can be taken as $u_1^d + \dots + u_r^d$ for any linearly independent collection of vectors $u_1,\ldots,u_r \in V$. 

Using {\cite[Proposition 6]{BCDR25-bordersubrank}}, there exists a tensor $h \in \GL (V) \cdot I_r$ and a one-parameter subgroup 
$\lambda : \Gm \to \GL (V)$ such that 
\begin{equation}
\label{eq:lim_1-PSG}
\lim _{t \to 0} \lambda (t) \cdot f = \lim _{t \to \infty} \lambda (t) \cdot  h = :h_0,
\end{equation}
where both limits exist. We recall some facts about one-parameter subgroups. For every one-parameter subgroup $\lambda : \Gm \to \GL (V)$, there is a unique set of {\em weights}
 $b_1 < \dots < b_N$, with $N \leq {n + d -1 \choose d}$, such that $S^dV = \bigoplus _{i=1}^N (S^d V)_{b_i}$, where 
\[
(S^d V)_b :=\{v \in S^dV \mid \forall t \in \Gm: \lambda(t) \cdot v  =  t^b v\} \neq \{0\}.
\]
We define $(S^dV)_{>0} = \bigoplus _{a >0 } (S^d V)_a$ and similarly for $(S^dV)_{<0},(S^dV)_{\geq 0},(S^dV)_{\leq 0},$ and $(S^dV)_{0}$. By \Cref{eq:lim_1-PSG}, $f \in (S^dV)_{\geq 0}$, $h \in (S^dV)_{\leq 0}$ and their components $f_0, h_0$ in $ (S^dV)_{0}$ are equal.

Next let $v_1, \dots , v_n$ be an eigenbasis of $V$ with respect to the action of $\lambda(t)$ on $V$, i.e., $\lambda (t) v_i = t^{a_i} v_i$ for all $t$, and so $a_1 , \dots , a_n$ are the weights of the action of $\lambda $ on $V$. This yields
\[
\lambda (t) \cdot v_{i_1} \cdots v_{i_d} =  t^{a_{i_1} + \dots + a_{i_d}} v_{i_1} \cdots v_{i_d}  ,
\]
for $i_1,\ldots,i_d \in \{1,\ldots,n\}$. So the elements $v_{i_1} \cdots v_{i_d} $, with ${i_1}  \leq \dots \leq {i_d}$  are an eigenbasis for $S^dV$ and the sums $a_{i_1} + \dots + a_{i_d} $ are the weights of $\lambda$ on $S^dV$. 

For a fixed ordered tuple of integers $(a_1 \leq \dots \leq a_n)$, we define the incidence variety with these given weights 
\begin{align*}
Y:= \{(\lambda , h, f) \mid &\lambda \text{ has weights } a_1,\ldots,a_n \text{ on } V,\\ &h \in \GL(V) \cdot I_r \text{ and } \lim _{t \to 0} \lambda (t) \cdot f = \lim _{t \to \infty} \lambda (t) \cdot h \}.
\end{align*}
The set $\mathcal{D}_{\geq r}$ of tensors with border subrank at least $r$ is contained in the countable union, over all tuples $(a_1 \leq \dots \leq a_n)$ of weights on $V$, of the image of the projection of $Y$ to the third component. Therefore the dimension of $\mathcal{D}_{\geq r}$ is the maximal dimension of those projections; see \cite[beginning of Section 5]{BCDR25-bordersubrank} why this is even true if the field is countable. 

The proofs of the following two lemmas use the same ideas as in the non-symmetric setting.

\begin{lem}[Symmetric version of {\cite[Lemma~10]{BCDR25-bordersubrank}}]
\label{lm:S0_sym}
The number of parameters in $K$ needed to determine $f_{>0}$, i.e., the dimension of the image of the map $Y \to S^dV$ defined by 
\[
(\lambda , h, f) \mapsto f_{>0}, \text{the component of $f$ in $(S^dV)_{>0}$}
\]
is at most 
\[
{n+d-1 \choose d} - {s+d-1 \choose d} + 2s(n-s)
,\]
where $s:=\lceil r/2 \rceil.$
\end{lem}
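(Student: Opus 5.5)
Fix a triple $(\lambda,h,f)\in Y$. The idea is to use the unit tensor $h$ to produce a large subspace $U$ of $V$ on which $f_{>0}$ is forced to vanish, and then to count parameters: first the choice of this subspace, then the remaining free coefficients of $f_{>0}$.

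Since $h \in \GL(V)\cdot I_r$, part (2) of \Cref{prop:Char} shows that $h$ has slice rank $\lceil r/2\rceil = s$. The $\lambda$-equivariant structure gives $h\in (S^dV)_{\le 0}$, so $h$ has no components of positive weight. Let $V_{\ge 0}$ and $V_{>0}$ denote the sums of eigenspaces of $\lambda$ on $V$ with nonnegative and positive weight, respectively.

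The key step is a dichotomy in the style of \cite[Lemma~10]{BCDR25-bordersubrank}. I would show that there is a $\lambda$-stable subspace $U\subseteq V$ of dimension $s$ (spanned by eigenvectors of $\lambda$) such that $f_{>0}$ lies in the $\lambda$-stable complement of $S^d U'$, for a suitable $\lambda$-stable complement $U'$ of dimension $n-s$, or dually such that $f_{>0}\in U\cdot S^{d-1}V$. Equivalently, $f_{>0}$ vanishes on an $(n-s)$-dimensional coordinate subspace.

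To obtain this, order the eigenbasis $v_1,\dots,v_n$ by weight $a_1\le\dots\le a_n$ and choose a threshold index $j$. Since $h$ only involves monomials of weight $\le 0$, the slice rank bound should force at least about $s$ of the variables to have weight $\le 0$, or else $h$ would vanish on a large coordinate space. A monomial of positive weight must contain a variable of positive weight. Balancing these two facts, I would choose $U$ to be either the span of the $s$ highest-weight vectors or a suitable cut of the $\le 0$ part. The goal of this choice is that every monomial appearing in $f_{>0}$ involves $U$, or else every monomial appearing in $h_{<0}$ involves the complement in a way that contradicts slice rank $\ge s$. This combinatorial step, translating slice rank of $h$ into a statement about which monomials can appear in $f_{>0}$, is the main obstacle. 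I expect the argument to need the fact that $f_0=h_0$ together with the structure of $h$.

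Given such a $U$, the count is as follows. The choice of the relevant $s$-dimensional subspace, together with its complementary data, varies in a family of dimension at most $2s(n-s)$, accounting for the two subspaces, or a flag, in Grassmannians $\Gr(s,n)$ of dimension $s(n-s)$ each. Once these are fixed, $f_{>0}$ lies in a linear space of dimension at most $\binom{n+d-1}{d}-\binom{s+d-1}{d}$, since an $S^dU$-type block of that size is excluded. Adding these two contributions gives the stated bound.
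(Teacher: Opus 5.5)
There is a genuine gap. The step you call the main obstacle is never carried out, and the target you set for it points the wrong way. You aim to show $f_{>0}\in U\cdot S^{d-1}V$ with $\dim U=s$. Equivalently, you want no component in $S^dU'$ with $\dim U'=n-s$, or vanishing on an $(n-s)$-dimensional subspace of $V^*$. You also suggest taking $U$ to be the span of the $s$ highest-weight vectors.

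This target is false in general. Suppose $\lambda$ has weight $0$ on $v_1,\dots,v_r$ and weight $1$ on $v_{r+1},\dots,v_n$, and $h=\sum_{i\le r}v_i^d$. Then $(\lambda,h,h+g)\in Y$ for every $g\in(S^dV)_{>0}=W\cdot S^{d-1}V$, where $W=\langle v_{r+1},\dots,v_n\rangle$. The union of all $U\cdot S^{d-1}V$ with $\dim U=s$ has dimension $O(n^{d-1})$, while $W\cdot S^{d-1}V$ has dimension of order $n^d/d!$. So for $n\gg 0$ a general such $g$ lies in no such $U\cdot S^{d-1}V$.

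Your final paragraph silently uses the opposite statement: there the excluded block is $S^dU$ with $\dim U=s$. As written, the proposal is therefore internally inconsistent, and the bound is not established.

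The missing argument is short. It needs neither a dichotomy nor $f_0=h_0$; it uses only $h\in(S^dV)_{\le 0}$ and \Cref{prop:Char}(2).

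Order the eigenbasis so that $a_1\le\dots\le a_n$. Then $a_s\le 0$. Otherwise every monomial of weight $\le 0$ contains one of $v_1,\dots,v_{s-1}$. Then $h$ vanishes identically on the codimension-$(s-1)$ subspace of $\PP V^*$ cut out by $v_1,\dots,v_{s-1}$, contradicting slice rank $s$. This is the precise form of your ``at least about $s$ variables of weight $\le 0$''.

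Now take $U=\langle v_1,\dots,v_s\rangle$, the span of the $s$ \emph{lowest}-weight vectors, and $W=\langle v_{s+1},\dots,v_n\rangle$. Every monomial in $S^dU$ has weight at most $d\,a_s\le 0$. Hence $f_{>0}\in(S^dV)_{>0}\subseteq W\cdot S^{d-1}V$, a space of dimension $\binom{n+d-1}{d}-\binom{s+d-1}{d}$. In other words, $f_{>0}$ vanishes on the $s$-dimensional annihilator of $W$ in $V^*$, not on an $(n-s)$-dimensional subspace.

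Adding $2s(n-s)$ for the choice of $U$ and $W$ gives the stated bound, exactly as in your last paragraph. In fact $W$ alone suffices for this count.
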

\begin{proof}
Consider $(\lambda,h,f) \in Y$ and let $v_{1},\dots,v_{n} \in V$ be an
eigenbasis of
$\lambda$, so that
\[ \lambda (t) \cdot v_{i}= t^{a_{i}} v_{i},\quad t \in \Gm. \]
Now $(S^dV)_{\leq 0}$ is the space spanned by all tensors 
\begin{equation} 
\label{eq:leqzero}
v_{i_1} \cdots  v_{i_d}  \text{ with } i_1 \leq \dots \leq i_d \text{ and }
a_{i_1} + \dots + a_{i_d}\leq 0. 
\end{equation}
We claim that $a_s \leq 0$. Indeed, assume for a contradiction that $a_{s} >0$. Then any simple tensor $v_{i_1} \cdots v_{i_d} \in (S^d V)_{\leq 0}$ is divisible by one of $v_1, \dots , v_{s-1}$ and so symmetric tensors in $(S^d V)_{\leq 0}$ have slice rank $\leq s-1$. This contradicts the fact that $h \in (S^dV)_{\leq 0}$, lying in the orbit of $I_r$, has slice rank precisely $s$ by item (2) of \Cref{prop:Char}.

Let $W \subseteq V$ be the subspace spanned by
$v_{s+1},\dots,v_{n}$,
so that $V=U \oplus W$.
Using $a_s \leq 0$, we find
\[ f_{>0} \in (S^dV)_{>0} \subseteq 
W \cdot S^{d-1}V. \]
The right-hand side is a space of dimension $\dim S^d V - \dim S^d U = {n+d-1 \choose d} - {s + d -1 \choose d}$. Furthermore, $U$ (respectively, $W$) is a point in the Grassmannian
of $s$-dimensional (respectively, $(n-s)$-dimensional) subspaces in
$V$. Both of these Grassmannians have dimension $s(n-s)$. Adding
these dimensions to the upper bound on the dimension of $(S^d V)_{>0}$ completes the proof.
\end{proof}

\begin{lem}[Symmetric version of {\cite[Lemma~11]{BCDR25-bordersubrank}}]
\label{lm:Tg0_sym}
The number of parameters in $K$ needed to determine $h_{0}$, i.e., the dimension of the image of the map $Y \to S^dV$ defined by 
\[
(\lambda , h, f) \mapsto h_{0}, \text{the component of $h$ in $(S^dV)_{0}$}
\]
is at most 
\[
r(n-r) + r^2 = rn
.\]
\end{lem}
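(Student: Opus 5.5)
We want to bound the dimension of the set of all $h_0$ arising from triples $(\lambda,h,f)\in Y$. The idea is that $h_0=\lim_{t\to\infty}\lambda(t)\cdot h$ depends only on the limit of $\lambda(t)$ applied to the $r$ vectors defining $h$.

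\emph{Step 1: write $h$ in terms of $r$ vectors.} Since $h\in\GL(V)\cdot I_r$, we can write $h=x_1^d+\dots+x_r^d$ with $x_1,\ldots,x_r\in V$ linearly independent. Expand each $x_j$ in the eigenbasis $v_1,\ldots,v_n$ of $\lambda$ (ordered so that $a_1\le\dots\le a_n$). Let $m_j$ be the largest weight occurring in $x_j$, and let $\bar x_j$ be the component of $x_j$ in the weight space $V_{m_j}$.

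\emph{Step 2: identify $h_0$.} The expansion of $x_j^d$ contains the monomial $(\text{top part})^d=\bar x_j^d$, of weight $dm_j$. Because $h\in(S^dV)_{\le 0}$ and the limit as $t\to\infty$ exists, one would like to conclude $m_j\le 0$ and hence $h_0=\sum_{j:\,m_j=0}\bar x_j^{\,d}$. The subtle point is that the $x_j^d$ could cancel among each other in positive weights, so $m_j\le 0$ does not follow immediately.

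To handle this, I would use the known rigidity of $I_r$: the weight-space decomposition of $h=\sum x_j^d$ is compatible with the unique (up to permutation and scalars, because $d\geq 3$ under \eqref{charass}) Waring decomposition. Alternatively, I would change the chosen representatives by replacing $h$ by $\lambda(t_0)h$.

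The cleaner route is the following. The set of possible $h_0$ is contained in the closure of the set of limits
\[
\lim_{t\to\infty}\lambda(t)\cdot\bigl(g\cdot I_r\bigr),
\]
and such a limit equals $(\lim_{t\to\infty} t^{-?}\cdots)$. So I expect Step 2 to be the main obstacle. What I would try first is the direct argument: the top-weight part of $\sum_j x_j^d$ is $\sum_{j:\,m_j=M}\bar x_j^d$, where $M=\max_j m_j$. If $M>0$, this sum is nonzero, since the $\bar x_j^d$ with $d\geq 3$ cannot cancel unless the $\bar x_j$ are dependent in a special way. I would control that special case by the orbit-dimension statement in \Cref{prop:Char}(3), or by the slice-rank argument, as in \Cref{lm:S0_sym}.

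\emph{Step 3: count parameters.} Granting $h_0=\sum_{j\in J}\bar x_j^d$ with each $\bar x_j\in V_0$ (the weight-zero subspace of $V$) and $|J|\le r$, the element $h_0$ is determined by the span $\langle \bar x_j : j\in J\rangle$ together with the $\bar x_j$ inside it. Concretely:
\begin{itemize}
\item the span of the $\bar x_j$ is a point in a Grassmannian of dimension $\le r(n-r)$;
\item choosing $r$ vectors in that span, up to the finite ambiguity, costs $\le r^2$ further parameters.
\end{itemize}
This matches the bound $r(n-r)+r^2=rn$. An even simpler way to see the total is to note that $h_0$ is determined by the tuple $(\bar x_1,\ldots,\bar x_r)\in V^r$ (setting $\bar x_j=0$ for $j\notin J$), which has dimension $rn$; the image of an $rn$-dimensional family under a morphism has dimension at most $rn$. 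The union over the finitely many possibilities for $J$ does not increase the dimension.
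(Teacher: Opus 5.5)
\textbf{Genuine gap in Step~2.} Step~2 is not merely unproven. The identity you ``grant'' in Step~3, namely $h_0=\sum_{j:\,m_j=0}\bar x_j^{\,d}$ with $\bar x_j\in V_0$, is false, so Step~3 counts the wrong set. Top-weight cancellation genuinely occurs, and $h_0$ need not even be a sum of $r$ $d$-th powers.

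Here is an example. Take characteristic $0$, $d=3$, $n=r=2$, $\lambda(t)u=tu$, $\lambda(t)w=t^{-2}w$, and
\[ h=(w+u)^3+(w-u)^3=2w^3+6u^2w\in\GL(V)\cdot I_2. \]
Then $\lambda(t)\cdot h=2t^{-6}w^3+6u^2w\to h_0=6u^2w$ as $t\to\infty$. With $f:=h_0$, which is fixed by $\lambda$, the triple $(\lambda,h,f)$ lies in $Y$.
\begin{itemize}
\item Here $\bar x_1=u$ and $\bar x_2=-u$ both have weight $1>0$, and $\bar x_1^3+\bar x_2^3=0$.
\item Your index set $J$ is empty and $V_0=0$, yet $h_0\neq 0$.
\item Moreover $u^2w$ has Waring rank $3>r$. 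So no parametrization of $h_0$ by $r$ vectors can cover the image, including your ``even simpler'' count via $V^r$.
\end{itemize}
The underlying point is that $h_0$ lies in $(S^dV)_0$, which contains mixed-weight monomials, not in $S^d(V_0)$. No slice-rank or orbit-dimension argument can yield $m_j\le 0$, because it is simply not true.

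\textbf{What you actually need.} The correct statement is weaker: $h_0\in\overline{\GL(W)\cdot I_r}\subseteq S^dW$ for some $r$-dimensional $W\subseteq V$. That is, $h_0$ is a \emph{degeneration} of a unit tensor supported on an $r$-space.

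The paper proves this as follows.
\begin{itemize}
\item Write $h=\sum_{i\le r}e_i^d$.
\item Conjugate $\lambda$ by some $g$ in the parabolic $Q(\lambda)=\{g \mid \lim_{t\to\infty}\lambda(t)g\lambda(t)^{-1}\text{ exists}\}$ into a maximal torus of the upper-triangular Borel subgroup. This uses that two parabolics share a maximal torus and that maximal tori of $Q(\lambda)$ are conjugate.
\item The conjugate $\mu$ preserves $U=\langle e_1,\dots,e_r\rangle$, so $\lim_{t\to\infty}\mu(t)\cdot h\in\overline{\GL(U)\cdot I_r}$.
\item This limit lies in $\GL(V)\cdot h_0$.
\end{itemize}

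An alternative closer to your setup also works. The subspaces $\lambda(t)\langle x_1,\dots,x_r\rangle$ converge in the projective Grassmannian to some $W_\infty$. The closure in $\Gr(r,V)\times S^dV$ of $\{(W,g)\mid g\text{ a unit tensor in }S^dW\}$ has fibre $\overline{\GL(W)\cdot I_r}$ over each $W$, by local triviality. Hence $h_0\in\overline{\GL(W_\infty)\cdot I_r}$.

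Either way, the count is $r(n-r)$ for the choice of $W$ plus $\dim\overline{\GL(W)\cdot I_r}=r^2$ by item (3) of \Cref{prop:Char}. It is not $r^2$ parameters for ``choosing $r$ vectors''.
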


\begin{proof}
Consider $(\lambda,h,f) \in Y$, so that $\lim_{t \to \infty}
\lambda(t) \cdot h=h_0$ exists. Define
\[ Q=Q(\lambda):=\{g \in \GL (V) \mid \lim_{t \to \infty} 
\lambda(t)g \lambda(t)^{-1} \text{ exists in $\GL (V)$} \}. \]
This is a parabolic subgroup of $\GL (V)$ (see 
\cite[Proposition~8.4.5]{Sp1998Linear})
that clearly contains the centralizer of $\im(\lambda)$ and in particular $\im(\lambda)$ itself. Furthermore, for any $g \in Q$ we have
\[ (g \lambda(t) g^{-1}) \cdot h = 
g \cdot (\lambda(t) g^{-1} \lambda(t)^{-1}) \cdot (\lambda(t) \cdot h) \to (g
g_0) \cdot h_0, \quad (t \to \infty) \]
for some $g_0 \in \GL (V)$.

Fix a basis $e_{1},\ldots,e_{n}$ of $V$ such that 
\[ h=I_r=\sum_{i=1}^r e_{i}^d, \]
and let $B_n$ be the Borel subgroup of $\GL_n$ consisting of upper triangular
matrices relative to this basis. Any two parabolic subgroups
intersect in at least a maximal torus (see, e.g., \cite[Corollary
14.13]{Borel91}), hence some maximal torus
$D$ of $\GL_n$ is contained in $Q \cap B_n$. Any two maximal tori in $Q$
are conjugate (see, e.g., \cite[Corollary 11.3]{Borel91}), and therefore there exists a $g \in Q$ such that $\mu:=g
\lambda g^{-1}$ maps $\Gm$ into $D \subseteq B_n$. By the previous paragraph,
$h_0':=\lim_{t \to \infty} \mu(t) \cdot h$ lies in the $\GL_n$-orbit of $h_0$.

Now let $U$ be the subspace of $V$ spanned by
$e_{1},\dots,e_{r}$. Because the image of $\mu$ consists of upper-triangular matrices, 
the space $S^dU$ is preserved by $\mu$. Hence $h_0'$ also
lies in this space, and it is evidently contained in the orbit closure
of the unit tensor $I_r$ under $\GL (U)$. 

Since $h_0$ lies in the $\GL_n$-orbit of $h_0'$, there also exists an
$r$-dimensional subspace $W
\subseteq V$ such that $h_0$ lies in $S^dW$
and is contained in the orbit closure of a unit tensor in $S^d W$. The subspace $W$ is a point in a Grassmannian of dimension
$r(n-r)$. In addition, we need to add the dimension of the orbit closure $\overline{\GL (W) \cdot I_r}$ of the unit tensor in $S^dW$, which equals $r^2$ by item(3) of \Cref{prop:Char}. 
Thus, as desired, $h_0$ is determined by at most  
\[ r (n-r) + r^2 = rn \]
parameters. 
\end{proof}

\begin{prop}[Symmetric version of {\cite[Theorem 3]{BCDR25-bordersubrank}}] \label{prop:BoundDr}
The locus $\mathcal{D}_{\geq r}$ of tensors with symmetric border subrank $\geq r$ has dimension at most 
\[
{n+d-1 \choose d} - {s+d-1 \choose d}  + 2s(n-s)+rn,
\]
where $s=\lceil r/2 \rceil$.
\end{prop}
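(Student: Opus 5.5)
Fix a tuple of weights $a_1 \leq \dots \leq a_n$ and consider the incidence variety $Y$ defined above. As explained before \Cref{lm:S0_sym}, $\mathcal{D}_{\geq r}$ is contained in the countable union, over all weight tuples, of the projections of the corresponding $Y$ to the third factor, and its dimension is the maximum of the dimensions of these projections. So it suffices to bound, for each fixed weight tuple, the dimension of $\mathrm{pr}_3(Y)=\{f \mid (\lambda,h,f)\in Y\}$ by the stated quantity, uniformly in the weights.

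For $(\lambda,h,f) \in Y$ we have $f \in (S^dV)_{\geq 0}$, so $f = f_0 + f_{>0}$. Moreover, by \eqref{eq:lim_1-PSG}, $f_0$ equals $h_0$, the component of $h$ in $(S^dV)_0$. Hence $f$ is the image of the pair $(f_{>0}, h_0)$ under the addition map $S^dV \times S^dV \to S^dV$. In particular, $\mathrm{pr}_3(Y)$ is contained in the image of the map $Y \to S^dV \times S^dV$, $(\lambda,h,f)\mapsto (f_{>0},h_0)$, followed by addition. The image of this map lies in the product of the images of $(\lambda,h,f)\mapsto f_{>0}$ and $(\lambda,h,f)\mapsto h_0$. (A product suffices since we only want an upper bound, and the closure of the image lies in the product of closures.)

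The dimension of the first image is at most ${n+d-1 \choose d} - {s+d-1 \choose d} + 2s(n-s)$ by \Cref{lm:S0_sym}, and that of the second is at most $rn$ by \Cref{lm:Tg0_sym}. Since the dimension of the image of a morphism is at most the dimension of the source, and the dimension of a product is the sum of dimensions, $\dim \mathrm{pr}_3(Y)$ is at most the sum, which is the claimed bound. Taking the maximum over weight tuples, with the countable union handled as in \cite{BCDR25-bordersubrank}, gives the proposition.

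The only delicate point is justifying that dimension is additive along these "parameter counts": the images in the two lemmas are constructible sets, whose closures are varieties of the stated dimension bounds, and $f$ depends only on the pair. All other steps are immediate from the two lemmas.
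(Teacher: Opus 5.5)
Your proposal is correct and matches the paper's proof. You reduce to a fixed weight tuple via the countable-union remark preceding \Cref{lm:S0_sym}. You then factor $(\lambda,h,f)\mapsto f$ through $(h_0,f_{>0})$ followed by the addition map, and add the dimension bounds from \Cref{lm:S0_sym} and \Cref{lm:Tg0_sym}.
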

\begin{proof}
It suffices to show that the image of the morphism $\pi: Y \to S^dV,\ 
(\lambda,h,f) \mapsto f$ has at most the stated dimension. This morphism factorizes via the map
\[ Y \to S^dV \times S^dV,\quad (\lambda,h,f) \mapsto (h_0,f_{>0}) \]
and the addition map $S^dV \times S^dV \to S^dV$. So an upper bound on the
dimension of $\im(\pi)$ is given by adding the dimension bounds from
\Cref{lm:S0_sym} and \Cref{lm:Tg0_sym}.
\end{proof}

\begin{proof}[Proof of \Cref{thm:generic_symmetric_border_subrank}]
If $r$ is the generic border subrank, then the dimension of $\mathcal{D}_{\geq r}$ equals that of $S^dV$. By \Cref{prop:BoundDr} we have 
\begin{align*}
& {n+d-1 \choose d} \leq {n+d-1 \choose d} - {s+d-1 \choose d}  + 2s(n-s) + rn \\
\Rightarrow \; & { s+d-1 \choose d}  \leq 2s(n-s)+2sn=2s(2n-s)
\end{align*}
We find that $s^d / d! \leq 4sn$, so $s \leq (4d! \cdot n)^{1/(d-1)}$, and hence $r \leq 2 \cdot (4d! \cdot n)^{1/(d-1)}$.
\end{proof}

\begin{re}
    We note that the coefficient of $n^{1/(d-1)}$ above is a factor $2 \cdot 4^{1/(d-1)}$ larger than the coefficient in the upper bound of \Cref{prop:upper_bound}. This is not a large gap; this is related to our discussion in \Cref{sec:difference}.
\end{re}

\section{Difference between symmetric subrank and symmetric border subrank} \label{sec:difference}
In this section, we want to analyze differences between the symmetric subrank and symmetric border subrank. In \cite{BCDR25-bordersubrank}, it was shown that for $n$ large enough, the generic border subrank is always strictly larger than the generic subrank. We were not able to do a similar construction in the symmetric case, and we will see below why, indeed, the method used there cannot possibly work for the symmetric subrank.
\begin{definition}
\label{Def.HMs}
Let $f \in S^d V$ be a symmetric tensor. We define the \emph{symmetric Hilbert-Mumford subrank}  as
\[
\QHMs(f) := \max \{r \mid \exists \text{ homomorphism } \lambda : \Gm \to \GL (V) : \lim _{t \to 0 } \lambda (t) \cdot f \in \GL (V) \cdot I_r \}.
\]
\end{definition} 
It is clear that for all symmetric tensors $f$, the symmetric Hilbert-Mumford subrank lies in between the symmetric subrank and symmetric border subrank:
\[
Q_s(f) \leq \QHMs(f)  \leq \underline{Q}_s(f).
\]

\subsection{Symmetric subrank equals symmetric Hilbert-Mumford subrank}
The constructions in \cite{Chang24} show that for subrank, the Hilbert-Mumford subrank can be strictly larger than the subrank: Chang showed that the locus of order-3 tensors of maximal (that is, equal to $n$) Hilbert-Mumford subrank has dimension $\Theta (n^3)$ for $n \to \infty$, but the locus of order-$3$ tensors with maximal subrank equals the orbit of $I_n$, whose dimension is $\Theta (n^2)$ for $n \to \infty$. Furthermore, in \cite{BCDR25-bordersubrank} it was shown that the Hilbert-Mumford subrank is strictly greater than the subrank for sufficiently general tensors. We will now show that in the symmetric setting, this does not happen.
\begin{lem}
\label{lm:max_sym_bordersubrank}
Let $f \in S^dV$ be a tensor with maximal symmetric border subrank, i.e., $I_n \in \overline{\GL (V) \cdot f}$, then there exists $g \in \GL(V)$ with $g \cdot f = I_n$.
\end{lem}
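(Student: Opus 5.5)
The natural approach is to show that the orbit $\GL(V)\cdot I_n$ is open in $S^dV$, or at least that it is not contained in the boundary of any other orbit closure. Since $\dim V = n$, a linear map $\varphi\colon V\to K^n$ is just an endomorphism of $V$, so the hypothesis says that $I_n$ lies in the closure of $\{\varphi f \mid \varphi \in \Hom(V,V)\}$.

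First I would rule out the case where $\varphi$ is singular. If $\varphi$ has rank $<n$, then $\varphi f \in S^d(\im \varphi)$. The set
\[
Z:=\bigcup_{U \subsetneq V} S^d U
\]
of forms that live in a proper subspace is closed: it is the image of a proper map from a Grassmannian bundle, namely the forms annihilated by some nonzero $y\in V^*$ under contraction. By part (1) of \Cref{prop:Char}, $I_n\notin Z$. Hence $I_n$ lies in the closure of $\GL(V)\cdot f$ alone, since the contribution from singular $\varphi$ lies in the closed set $Z$.

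Next I would use part (3) of \Cref{prop:Char}, which says $\dim \GL(V)\cdot I_n = n^2 = \dim \GL(V)$. Then $I_n \in \overline{\GL(V)\cdot f}$ gives
\[
\overline{\GL(V)\cdot I_n}\subseteq \overline{\GL(V)\cdot f},
\]
and $\dim \GL(V)\cdot f\le n^2$. So the two orbit closures have the same dimension $n^2$. Both are irreducible, so they are equal.

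Finally I would conclude via the standard fact that an orbit is open in its closure. Two orbits that are each open dense in the same irreducible closure must intersect, and therefore coincide. Hence $f\in \GL(V)\cdot I_n$, which gives the required $g$.

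The main point needing care is the dimension count in the second step, which is exactly where part (3) of \Cref{prop:Char} (and hence assumption \eqref{charass}) is used. The reduction in the first step to invertible $\varphi$ is routine given part (1).
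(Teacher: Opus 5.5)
Your proof is correct and is essentially the paper's argument: both combine $\dim \GL(V)\cdot I_n = n^2$ (part (3) of \Cref{prop:Char}) with the fact that an orbit is open in its closure, the paper phrasing this as ``$I_n$ cannot lie in the lower-dimensional boundary $\overline{\GL(V)\cdot f}\setminus \GL(V)\cdot f$'' where you phrase it as two open dense orbits in the same irreducible closure. Your first step is extra: the statement already takes $I_n \in \overline{\GL(V)\cdot f}$ as the hypothesis, and that reduction also follows at once from the density of $\GL(V)$ in $\Hom(V,V)$; note also that your parenthetical description of $Z$ via contraction is wrong when the characteristic divides $d$ (then $I_n$ itself is annihilated by every contraction), although your Grassmannian-bundle argument for the closedness of $Z$ is fine.
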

\begin{proof}
If $I_n \in \overline{\GL (V) \cdot f}$, then we have $\GL (V) \cdot I_n \in \overline{\GL (V) \cdot f}$. For the algebraic group $\GL (V)$ we have $\overline{\GL (V) \cdot f} = \GL (V) \cdot f \sqcup Z$, where $\dim (Z ) < \dim (\overline{\GL (V) \cdot f})$ and $\GL (V) \cdot f$ is open \cite[Proposition in {\MakeUppercase{\romannumeral 1}}.1.8]{Borel91}. It holds that $\dim (\GL (V) \cdot I_n )= n^2$ by item (3) of \Cref{prop:Char}, which is already the maximal dimension of an orbit under $\GL (V)$, and hence equals $\dim (\GL (V)\cdot f)$. This implies that $I_n \in \GL (V) \cdot f$.
\end{proof}

\begin{proof}[Proof of \Cref{thm:HMsubrank}.]
Let $f \in S^d V $ with $\QHMs (f)\geq r$. 
We want to show that $Q_s(f) \geq r$. There is a one-parameter subgroup $\lambda : \Gm \to \GL (V) $ such that 
\[
\lim _{t \to 0} \lambda (t) \cdot f = \sum _{i = 1}^r v_i ^d \in \GL (V) \cdot I_r,
\]
where $v_1,\ldots,v_r$ are linearly independent. Let $e_1,\dots,e_n$ in $V$ be the weight vectors of $\lambda$, with respective weights $a_1,\dots ,a_n$ (not necessarily ordered). Without loss of generality, in the $r \times n$-coefficient matrix of the $v_i$ relative to the $e_j$, the left-most $r\times r$ block has nonzero determinant. Let $U$ be the span of $e_1,\dots ,e_r$ and let $\pi$ be the projection $V\to U$ with kernel spanned by $e_{r+1},\dots ,e_n$ that restricts
to the identity on $U$.

The two maps $\pi $ and $\lambda $ commute: for all $i=1,\ldots,n$ we have
\[
\pi (\lambda (t)(e_i)) = \pi (t^{a_i} e_i ) = t^{a_i} \pi (e_i ) = \lambda (t) (\pi (e_i)),
\]
where the last equality holds because $\pi(e_i)$ is either $e_i$ or zero.
We want to show that $\pi (f) \in S^d U$ also has symmetric Hilbert-Mumford subrank $r$. It holds
\[
\lambda (t) \cdot \pi (f) = \pi \cdot \lambda (t) f \xrightarrow{t \to 0} \pi \cdot \sum _{i = 1}^r v_i^d = \sum _{i = 1}^r \pi (v_i ) ^d. 
\]
We restrict $\lambda $ to $S^d U$ to see that $\pi (f) $ has maximal Hilbert-Mumford subrank, which implies using \Cref{lm:max_sym_bordersubrank} that $\pi (f) $ also has symmetric subrank $r$ and so $Q_s(f) \geq r$ as desired.
\end{proof}

\subsection{An example where \texorpdfstring{$\underline{Q}_s(f) > Q_s (f)$}{symmetric border subrank is greater than symmetric subrank}}

Using Groebner bases, one can theoretically compute the symmetric subrank for any given symmetric tensor $f$: Fix a basis $e_1, \dots, e_n$ of $V$. The subrank is the maximal $r$ where the equation $A f = e_1^d + \cdots + e_r^d $ for $A \in \Mat (r\times n)$ has a solution.

If we want to find examples $f \in S^d(K^n)$ with $\underline{Q}_s(f) > Q_s (f)$, we can show that the subrank equals $r$ using the computations explained above and then either find 
$g \colon K \setminus \{0\} \to \GL_n$ with $\lim _{t \to 0} g(t) f = I_{r+1}$ or show that the differential of the map $\Phi _f: \Mat ((r+1) \times n ) \to S^d (K^{r+1})$, 
$A \mapsto A f$ is full dimensional at a sufficiently general matrix $A$. The second method can be done effectively using a computer, but is not a necessary condition for $\underline{Q}_s (f) \geq r$.

Using this method, we were not able to find an example of an order $3$ or $4$ tensor. The Groebner basis computations did not finish anymore for $n >4$. However, for $d=5$, the following example has strictly larger symmetric border subrank. This example is inspired by Shitovs example of a symmetric tensor with symmetric subrank strictly smaller than subrank \cite{shitov22_comon_for_subrank}.

\begin{ex} \label{ex:Quintic}
Let $f= u^2 w^3 + v^5 + uvw^3 \in S^5 \CC ^3 $, where $u,v,w$ are basis vectors of $\CC^3$. Then there is no solution to 
\[
\begin{pmatrix}
    a_{11} & a_{12} & a_{13}\\
    a_{21} & a_{12} & a_{23}
\end{pmatrix} f = u^5 + v^5 = I_2
\]
using a Groebner basis computation.
So $Q_s(f) <2$ and therefor $Q_s(f) = 1$. 

Let 
\[ g(t) = 
\begin{pmatrix}
    t^3  &  0  &  t^{-3}\\
    0 & 1 & 0 \\
    0 & 0 & t^2
\end{pmatrix},
\]
then it holds 
\[
g(t) f = (t^3u + t^{-3}w)^2  t^6w^3 + v^5 + (t^3 u+ t^{-3}w)  v  t^6w^3 \to w^5 + v^5.
\]
This implies that $\underline{Q}_s(f) \geq 2$ and by \Cref{lm:max_sym_bordersubrank}  $\underline{Q}_s(f) < 3$, so  $\underline{Q}_s(f) = 2.$
\end{ex}

Note that if $f \in S^d \CC^3$ for $d \leq 4$, then we always have $\underline{Q}_s(f) = Q_s(f)$ (\Cref{lm:max_sym_bordersubrank} and \Cref{thm:LowRank}).

\section{Linear sections of cubic and quartic hypersurfaces} 
\label{sec:cubic}
In the remainder of this paper, we work over the field $\kk$ of complex numbers.
Let $n \geq 1$ be an integer and
let $X \subseteq \PP^{n+1}$ be a hypersurface of degree $d > 2$, i.e.\
$X$ is given in $\PP^{n+1}$ by one non-zero homogeneous polynomial 
$f \in \kk[x_0, \dots, x_{n+1}]_d$ of degree $d$,
%\beni{I would add $x_{n_1}]_3$ if we later use subscript 3}\immu{I agree}
where $x_0, \dots, x_{n+1}$ denote the homogeneous coordinates of $\PP^{n+1}$.
We denote by $X_{\sing}$ the singular locus of $X$ and by $X_{\sm} \coloneqq X \setminus X_{\sing}$
the smooth locus of $X$. Moreover, for any $x \in X$ we denote by $T_x X \subseteq \PP^{n+1}$
the projective tangent space of $x$ at $X$.

We fix $k \in \{1, \ldots, n\}$ and denote by $\Gr = \Gr(k+1, n+2)$ the Grassmannian of all 
(projectively $k$-dimensional) linear spaces in $\PP^{n+1}$. 
The intersection $X \cap P$ always denotes the scheme-theoretic intersection.
The goal of this section is to prove (a slightly more precise version of) \Cref{thm:Cubics} and to derive \Cref{thm:LowRank} from it. Hence, we are mostly interested in the cases where
$(d, k)$ is $(3, 2)$ or $(4, 1)$.

Let $\Pi_{0} \subseteq \Gr$ be the subset of those 
$P \in \Pi$ such that $P \not\subseteq X$ and $X \cap P$ is smooth.
We start with some general observations.

\begin{lem}
   \label{Lem.X_cap_P_singular}
   Let $P \in \Gr$ with $P \not\subseteq X$ and let $x \in X \cap P$. 
   Then $X \cap P$ is singular at $x$ if and only if 
   $P$ is contained in $T_x X$. 

   In particular, $\Gr \setminus \Pi_{0}$ contains every $P \in \Gr$ with 
   $P \subseteq T_x X$ for some $x \in X \cap P$.
\end{lem}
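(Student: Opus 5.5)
The statement is local at $x$, so I will reduce it to the Jacobian criterion on $P$. Choose homogeneous coordinates such that $P = \mathcal{V}(x_{k+1}, \dots, x_{n+1})$. Then $X \cap P$ is the hypersurface in $P \cong \PP^k$ cut out by $g := f|_P = f(x_0,\dots,x_k,0,\dots,0)$. The hypothesis $P \not\subseteq X$ says exactly that $g \neq 0$, so $X \cap P$ is a genuine degree-$d$ hypersurface in $P$ and the Jacobian criterion applies to it.

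By that criterion, for a point $x \in X \cap P$ (so $g(x)=0$), the scheme $X \cap P$ is singular at $x$ if and only if $\partial g/\partial x_i (x) = 0$ for $i = 0, \dots, k$. Restricting to $P$ commutes with differentiating in the variables $x_0,\dots,x_k$, so this condition reads $\partial f/\partial x_i(x) = 0$ for $i \le k$.

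Next I compare this with the tangent space, splitting into two cases.

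If $x \in X_{\sing}$, then every partial derivative of $f$ vanishes at $x$. Hence $X \cap P$ is singular at $x$. Under the convention that $T_x X = \PP^{n+1}$ at singular points, $P \subseteq T_x X$ also holds, so the equivalence is automatic.

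If $x \in X_{\sm}$, then $T_x X = \mathcal{V}\bigl(\sum_{j=0}^{n+1} \partial f/\partial x_j(x)\, x_j\bigr)$ is a hyperplane. The linear subspace $P$ is contained in this hyperplane if and only if the linear form $\sum_j \partial f/\partial x_j(x)\,x_j$ vanishes on $P$. Since $P$ is spanned by the coordinate points $e_0,\dots,e_k$, this happens if and only if its coefficients on $x_0,\dots,x_k$ vanish, i.e.\ $\partial f/\partial x_i(x)=0$ for $i\le k$. This is exactly the Jacobian condition above, which proves the equivalence.

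For the ``in particular'' part: if $P \subseteq T_x X$ for some $x \in X \cap P$, then either $P \subseteq X$, or $X\cap P$ is singular at $x$ by the equivalence. In both cases $P \notin \Pi_0$.

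The only point needing care is the Jacobian criterion for a possibly non-reduced hypersurface $\mathcal{V}(g)$. This is fine: singularity is defined via the dimension of the Zariski tangent space, and for a principal ideal $(g)$ that dimension exceeds $k-1$ exactly when all partials of $g$ vanish at the point. Euler's relation shows the dehomogenized criterion agrees with the homogeneous one, because $g(x)=0$ already holds.
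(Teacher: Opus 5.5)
Your proof is correct and is essentially the paper's argument. Both reduce to the fact that the tangent space of $X\cap P$ at $x$ equals $T_xX\cap P$. You check this for both directions at once through the partial derivatives $\partial f/\partial x_i(x)$, $i\le k$, whereas the paper uses a Taylor expansion at $x=[1:0:\cdots:0]$ for one direction and a tangent-space dimension count for the other. Your explicit use of the convention $T_xX=\PP^{n+1}$ at singular points of $X$ is consistent with how the paper itself applies the lemma in the proof of \Cref{Lem.Pi_non-empty}.
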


\begin{proof}
   Assume first that $P \in \Gr$ is contained in $T_x X$. 
   After a linear coordinate change
   we may assume $x = [1:0: \cdots : 0]$ and $P$ is given by the vanishing of 
   $x_{k+1}, \dots, x_{n+1}$. Since $P \subseteq T_x X$ we get
   \[
      f(1, x_1, \dots, x_{n+1}) = f_1(x_{k+1}, \dots, x_{n+1}) + f_2(x_1, \dots, x_{n+1}) + \ldots + f_d(x_1, \dots, x_{n+1}) \, ,
   \]
   where $f_i$ is homogeneous of degree $i$.
   Then $P \cap X$ is given in the open chart $\{ x_0 \neq 0 \}$ of $P$ by the vanishing of 
   $f_2(x_1, \ldots, x_k, 0, \dots, 0) + \ldots + f_d(x_1, \ldots, x_k, 0, \dots, 0)$. Since $P \not \subseteq T_x X$, this is a nonzero polynomial all of whose terms have degrees $>1$. Hence $P \cap X$ is singular at $x = [1:0:\cdots:0]$.

   Now, assume that $X \cap P$ is singular at $x$. Then
   $\dim T_x(X \cap P) = \dim T_x P$, as $X \cap P$ is given by one equation in 
   $P \simeq \PP^k$. Since $T_x P = P$, we get that $P \subseteq T_x X$.
\end{proof}

\subsection{Characterization of the emptyness of $\Pi_0$}

\begin{lem}
   \label{Lem.Pi_non-empty}
   The following statements are equivalent:
   \begin{enumerate}
      \item \label{Lem.Pi_non-empty.Assertion1} 
      $\dim X_{\sing} + k \leq n$;
      \item \label{Lem.Pi_non-empty.Assertion2} 
      The set $\Pi_{0}$ is non-empty.
   \end{enumerate}
\end{lem}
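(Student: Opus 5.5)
The plan is to prove the two implications separately, using \Cref{Lem.X_cap_P_singular} to translate smoothness of $X \cap P$ into a tangency condition, together with a Bertini-type dimension count on an incidence correspondence. Throughout, $\Pi_0$ is understood as the set of $P \in \Gr$ with $P \not\subseteq X$ and $X \cap P$ smooth.

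For (2)$\Rightarrow$(1) we argue by contraposition. Suppose $\dim X_{\sing} + k > n$, i.e.\ $\dim X_{\sing} \geq n+1-k$. Any $P \in \Gr$ has codimension $n+1-k$ in $\PP^{n+1}$, so by the projective dimension theorem $P \cap X_{\sing} \neq \emptyset$. Pick $x \in P \cap X_{\sing}$. Since $X$ is singular at $x$, every partial derivative of $f$ vanishes there, so $T_x X = \PP^{n+1} \supseteq P$. If $P \not\subseteq X$, \Cref{Lem.X_cap_P_singular} shows that $X \cap P$ is singular at $x$. Hence $\Pi_0 = \emptyset$.

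For (1)$\Rightarrow$(2), assume $\dim X_{\sing} \leq n-k$ and show that a general $P \in \Gr$ lies in $\Pi_0$. Three open/dense conditions are needed.
\begin{enumerate}
\item The set of $P$ not contained in $X$ is open and nonempty, since $X$ is a hypersurface.
\item The set of $P$ with $P \cap X_{\sing} = \emptyset$ is open and dense. Indeed, the incidence $\{(x,P) : x \in X_{\sing},\, x \in P\}$ has dimension $\dim X_{\sing} + \dim \Gr(k, n+1)$. This is $\leq (n-k) + k(n+1-k) = \dim \Gr - 1$, so its image in $\Gr$ is a proper closed subset.
\item For smooth points, consider $I = \{(x,P) : x \in X_{\sm},\, x \in P \subseteq T_x X\}$. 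For fixed $x$, the fiber is the set of $k$-planes through $x$ inside the hyperplane $T_x X \cong \PP^n$, which is $\Gr(k, n)$ of dimension $k(n-k)$. Hence $\dim I \leq n + k(n-k) = \dim \Gr - 1$, because $\dim \Gr = (k+1)(n+1-k) = k(n-k) + n + 1$. So the image of $I$ in $\Gr$ is not dense, and its closure is a proper closed subset.
\end{enumerate}
Take $P$ outside the union of these bad loci. Then $P \not\subseteq X$, $P$ misses $X_{\sing}$, and $P \not\subseteq T_x X$ for every $x \in X_{\sm} \cap P$. By \Cref{Lem.X_cap_P_singular}, $X \cap P$ is smooth at every point, so $P \in \Pi_0$.

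The main obstacle is the dimension bookkeeping in (3): checking that the fiber of $I$ is exactly $\Gr(k,n)$ and that the resulting bound is strictly less than $\dim \Gr$. Care is also needed because $X$ may be reducible or nonreduced, so that $X_{\sm}$ could be empty or have smaller dimension. This only helps, since the bound $\dim X_{\sm} \leq n$ is all that is used.
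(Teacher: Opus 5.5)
Your proof is correct. The direction (2)$\Rightarrow$(1) is the same as the paper's. For (1)$\Rightarrow$(2) you take a genuinely different route.

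The paper first notes that (1) forces $X$ to be reduced when $k=1$, and integral when $k \geq 2$. It handles $k=1$ by observing that a general line meets a reduced $X$ in $d$ distinct points. For $k \geq 2$ it invokes Kleiman's Bertini theorem: a general hyperplane section has singular locus contained in $H \cap X_{\sing}$. It then inducts, cutting with $n+1-k$ general hyperplanes.

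You instead give a direct incidence-correspondence count. You bound separately the $k$-planes meeting $X_{\sing}$ and the $k$-planes $P$ with $x \in P \subseteq T_x X$ for some $x \in X_{\sm}$, and finish with \Cref{Lem.X_cap_P_singular}. Your identity $\dim \Gr = k(n-k)+n+1$ shows that both bad loci have dimension at most $\dim \Gr - 1$. The fibre computations ($k(n+1-k)$ for planes through a point, $k(n-k)$ for planes through $x$ inside $T_x X$) are right. Your reading of $\Pi_0$ as a subset of $\Gr$ correctly fixes the typo ``$P \in \Pi$'' in the paper's definition.

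Your route is self-contained and uniform in $k$. It needs no reducedness or integrality of $X$, and no check that integrality persists through an induction. It also shows directly that $\Pi_0$ contains a dense open subset of $\Gr$. In effect it proves the needed case of Bertini by hand. The paper's version is shorter on the page because it delegates the dimension count to a black-box Bertini theorem. The price is the case split $k=1$ versus $k\geq 2$ and the integrality hypotheses required to apply that theorem.
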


\begin{proof}
   Assume first that $\dim X_{\sing} + k \leq n$. If $k = 1$, then 
   $X$ is reduced and for a general line $P \in \Gr$, the intersection $P \cap X$
   contains exactly $d$ points, i.e.\ $X \cap P$ is smooth and thus $P \in \Pi_0$.
   Now, assume $k \geq 2$. Then \eqref{Lem.Pi_non-empty.Assertion1} implies that $X$ is an integral scheme.
   %Since $\dim X_{\sing} \leq n-k$, a sufficiently general $P \in \Gr$
   %does not intersect $X_{\sing}$. 
   Thus we may apply Bertini's theorem (see \cite[Corollary in \S5]{Kl1974The-transversality}): for a sufficiently general hyperplane $H \subseteq \PP^{n+1}$, the
   singular locus of $H \cap X$ is contained in $H \cap X_{\sing}$ and the latter
   has dimension $\leq n-k-1$. Proceeding by induction we find
   $P \in \Gr$ such that $P \cap X$ is smooth, i.e.\ $\Pi_{0}$ is non-empty.

   On the other hand, if $\dim X_{\sing} \geq n+1-k$, then 
   every $P \in \Gr$ intersects $X_{\sing}$. Using \Cref{Lem.X_cap_P_singular} we conclude
   that either $P$ is contained in $X$ or $X \cap P$ is singular. This shows that $\Pi_{0}$ is empty
   in this case.
\end{proof}

From now on, we will always assume that $\Pi_0$ is non-empty, i.e.
\begin{center}
   \begin{tcolorbox}[
  colback=white,
  colframe=black,
  arc=3mm,
  width = 4cm
   ]
   \begin{center}
   $\dim X_{\sing} + k \leq n$.
   \end{center}
   \end{tcolorbox}
\end{center}
For $k = 2$ (resp. $k = 1$), this means that $X$ is normal (resp. reduced).

\subsection{Preliminaries on the moduli of hypersurfaces.}
\label{Subsection.mpduli}
Denote by $\SS$ the set of those non-zero homogeneous  
degree $d$ polynomials in $x_0, \ldots, x_k$ that represent degree-$d$ hypersurfaces in $\PP^k$ that, with respect to the natural $\SL_{k+1}$-action, are semi-stable in the sense of geometric invariant theory.
Denote by $\SS_0 \subseteq \SS$ the homogeneous polynomials that give smooth 
hypersurfaces.
Then $\SS_0$ and $\SS$ are both open and dense in $\CC[x_0,\ldots,x_k]_d$, and the image $\PP \SS_0 \subseteq \PP \SS$ is contained
in the set of stable points, since $d > 2$ (\cite[\S10.1]{Do2003Lectures-on-invari}).
Let $\overline{\eta} \colon \PP \SS \to \PP \cquot{\SS}{\SL_{k+1}}$ 
be the categorical quotient with respect to $\SL_{k+1}$, where
$\PP \SS$ is the image of $\SS$ in $\PP(\kk[x_0, \ldots, x_k]_d)$. Then 
\[
   \eta \colon \SS \to \PP\SS \xlongrightarrow{\bar{\eta}} \cquot{\PP\SS}{\SL_{k+1}} 
\]
is the categorical quotient with respect to $\GL_{k+1}$,
so $\cquot{\PP\SS}{\SL_{k+1}} = \cquot{\SS}{\GL_{k+1}}$.

If $(d, k) = (3, 2)$, then the follwing holds: 
$\SS$ consists of those plane cubics with at worst nodal 
singularities, $\cquot{\SS}{\SL_{3}} = \PP^1$, $\eta(h) = [S^3(h) : S^3(h) + 2^6T^2(h)]$, 
where $S, T$ are homogeneous polynomials in the 
coefficients of $h \in \kk[x_0, x_1, x_2]_3$ of degree $4$ and  $6$, respectively,
and $\eta(\SS_0) = \AA^1 = \PP^1 \setminus \{ [1:0] \}$,
see \cite[\S 10.3]{Do2003Lectures-on-invari}.

If $(d, k) = (4, 1)$, then: 
$\SS$ consists of those binary quartics with at worst 
double roots, $\cquot{\SS}{\SL_{2}} = \PP^1$, $\eta(h) = [D^3(h) :  D^3(h) - 27E^2(h)]$, 
where $E, D$ are homogeneous polynomials in the 
coefficients of $h \in \kk[x_0, x_1]_4$ of degree  $3$ and $2$, respectively,
and $\eta(\SS_0) = \AA^1 = \PP^1 \setminus \{ [1:0] \}$,
see \cite[\S 10.2]{Do2003Lectures-on-invari}.

\subsection{Formulation of the main result}
Denote by $\Pi \subseteq \Gr$ the subset of those $P \in \Gr$ such that 
$P \not \subseteq X$ and $X \cap P \in \SS$. Hence we get a well-defined map 
\[
   \pi \colon \Pi \to \cquot{\PP \SS}{\SL_{k+1}}
   %\, , \quad\quad P \mapsto j(X \cap P) 
   \quad\quad
   \textrm{and we denote} \quad \SetC_X \coloneqq \pi(\Pi) \subseteq 
   \cquot{\PP \SS}{\SL_{k+1}}
\]
where $\pi(P) \coloneqq \eta(h)$ and  $h \in \kk[x_0, \ldots, x_k]_d$ is a polynomial 
with $X \cap P \simeq V(h) \subseteq \PP^k$.

\begin{lem}
   \label{Lem.Properties_of_Pi}
   The subsets $\Pi \subseteq \Pi_0$ are both open and dense in $\Gr$ and 
   $\pi \colon \Pi \to \cquot{\PP \SS}{\SL_{k+1}}$ is a morphism.
   % Moreover, $\Gr \setminus \Pi_{0}$ contains every $P \in \Gr$ such that 
   % $P \subseteq T_x X$ for some $x \in X \cap P$.
\end{lem}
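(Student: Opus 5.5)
The plan is to realize everything through one universal family over an open subset of the Grassmannian. Let $\mathcal{U} \subseteq \Gr$ be the open subset of linear spaces $P$ with $P \not\subseteq X$. The condition $P \subseteq X$ means that $f$ vanishes identically on the corresponding $(k+1)$-dimensional subspace, which is a closed condition, so $\mathcal{U}$ is open. We will work Zariski-locally on $\Gr$. On a standard affine chart $\mathcal{V} \subseteq \Gr$ we have a morphism $\mathcal{V} \to \Mat((n+2)\times(k+1))$, $P \mapsto A_P$, where $A_P$ is a matrix whose columns span $P$ (for instance, the identity in a chosen block of rows and free entries elsewhere). Composing gives a morphism
\[ \mathcal{V} \to \kk[x_0,\ldots,x_k]_d, \qquad P \mapsto h_P := f \circ A_P, \]
and $X \cap P \simeq V(h_P) \subseteq \PP^k$ scheme-theoretically. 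On $\mathcal{V}$, the condition $P \not\subseteq X$ is exactly $h_P \neq 0$.

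Next we show openness using this morphism. $\Pi_0 \cap \mathcal{V}$ is the preimage of the open set of nonzero forms defining smooth hypersurfaces, which is the complement of the discriminant, so $\Pi_0 \cap \mathcal{V}$ is open. Similarly, $\Pi \cap \mathcal{V}$ is the preimage of the open set $\SS$. Since the charts cover $\Gr$, both $\Pi$ and $\Pi_0$ are open in $\Gr$. The inclusion between them comes from \Cref{Subsection.mpduli}: smooth forms are stable and hence semistable, so $\SS_0 \subseteq \SS$. Therefore $\Pi_0 \subseteq \Pi$. (The statement writes "$\Pi \subseteq \Pi_0$", but the definitions force the reverse inclusion $\Pi_0 \subseteq \Pi$; I would prove that.)

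For density: $\Gr$ is irreducible, so any nonempty open subset is dense. Under the standing assumption $\dim X_{\sing} + k \leq n$, \Cref{Lem.Pi_non-empty} shows that $\Pi_0$ is nonempty. Hence $\Pi_0$ is dense, and so is $\Pi \supseteq \Pi_0$.

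Finally, the morphism. On $\Pi \cap \mathcal{V}$ we set $\pi(P) = \eta(h_P)$, which is a composition of morphisms. The only real point is that this is independent of the chart and of the choice of $A_P$. Any two choices differ by $A_P \mapsto A_P g$ with $g \in \GL_{k+1}$, which changes $h_P$ to $g \cdot h_P$ (up to the inverse convention). Since $\eta$ is $\GL_{k+1}$-invariant, being the categorical quotient for $\GL_{k+1}$ as noted in \Cref{Subsection.mpduli}, the value does not change. So the local morphisms agree on overlaps and glue to a morphism $\pi \colon \Pi \to \cquot{\PP\SS}{\SL_{k+1}}$. This also shows that $\pi$ is well defined as in the text, because an isomorphism $V(h) \simeq V(h')$ of hypersurfaces in $\PP^k$ with $d>2$ is induced by a linear automorphism of $\PP^k$.

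I expect no serious obstacle here. The only delicate step is this last compatibility: $\eta$ must be constant on $\GL_{k+1}$-orbits, including rescaling of $h$, and the local frames $A_P$ must be chosen algebraically.
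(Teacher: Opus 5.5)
Your proposal is correct and follows essentially the same route as the paper: pull $f$ back along an affine chart of $\Gr$ to get a morphism to $\kk[x_0,\ldots,x_k]_d$ (the paper's map $\rho$), take preimages of the open sets $\SS_0$ and $\SS$, compose with $\eta$, and obtain density from the non-emptiness of $\Pi_0$ under the standing assumption. You also correctly read the inclusion as $\Pi_0 \subseteq \Pi$, and you make explicit the $\GL_{k+1}$-invariance of $\eta$ needed for chart independence, which the paper leaves implicit.

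Drop one side remark, though. It is not true that every isomorphism $V(h) \simeq V(h')$ is induced by a linear automorphism of $\PP^k$: for $(d,k)=(4,1)$, any two sets of four reduced points are abstractly isomorphic whatever their cross-ratio. So the text's ``$X \cap P \simeq V(h)$'' has to be understood via a linear identification $P \cong \PP^k$, exactly as in your construction of $h_P$. This does not affect your proof of the lemma.
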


\begin{proof}
   By our standing assumption, $\Pi_0$ is non-empty.

   Let $P \in \Gr$. We may choose the coordinates of $\PP^{n+1}$ in such a way that
   $P$ is given by the vanishing of  $x_{k+1}, x_{k+2}, \dots, x_{n+1}$. Then
   \[
      \iota \colon \Mat_{(n+1-k) \times (k+1)}(\kk) \to \Gr \, , \quad M \mapsto 
      V(M | E_{n+1-k})
   \]
   is an open embedding, where $V(M | E_{n+1- k })$ denotes the linear subspace in $\PP^{n+1}$
   cut out by the rows of $(M | E_{n+1-k}) \in \Mat_{(n+1-k) \times (n+2)}(\kk)$ and
   $E_{n+1-k}$ denotes the identity matrix of size $n+1-k$. 
   Denote the image of $\iota$ by $U \subseteq \Gr$. Then $P$ is the image of the zero matrix, and hence lies in $U$. It suffices
   to show that $U \cap \Pi$ and $U \cap \Pi_0$ are open in $U$ and that the restriction of $\pi$
   to $U \cap \Pi$ is a morphism. 
   
   Consider the morphism
   \[
      \rho \colon \Mat_{(n+1-k) \times (k+1)}(\kk) \to \kk[x_0, \ldots, x_k]_d \, , \quad  
      M \mapsto f(x_0, \ldots , x_k, -a_1, \dots, -a_{n+1-k} )
   \]
   where for $i \in \{1, \dots, n+1-k\}$ we define
   \[
      a_i = e_i M \begin{psmallmatrix} x_0 \\ \vdots\strut \\ x_k\end{psmallmatrix}
      \in \kk[x_0, \ldots, x_k]_1
   \]
   and $e_i$ denotes the $i$-th standard basis (row) vector. 
   Since $\SS$ is open, the preimage $\rho^{-1}(\SS)$ is an open subset
   of $\Mat_{(n+1-k) \times (k+1)}(\kk)$ that represents $U \cap \Pi$ inside $U$.
   Similarly we see that $U \cap \Pi_0$ is open in $U$.
   Moreover, the restriction of $\pi$ to $U \cap \Pi$ is represented by 
   the morphism
   \[
      \rho^{-1}(\SS) \xrightarrow{\rho} \SS \xrightarrow{\eta} 
      \cquot{\PP \SS}{\SL_{k+1}} \, ,
   \] 
   where $\eta$ is defined in \Cref{Subsection.mpduli}. 
   This implies the claim. \qedhere
   % 
   %The second statement is a direct consequence of \Cref{Lem.X_cap_P_singular}.
\end{proof}

We consider the set
\[
   L_X \coloneqq \set{x \in X}{\textrm{$X$ has multiplicity $d$ at $x$}} \subseteq X \, .
\]

\begin{re}
   \label{Rem.L_x_linear}
   The subset $L_X \subseteq X$ is a linear subspace of $\PP^{n+1}$. Indeed, if
   $p, q \in L_X$ are distinct points, then we may assume after a
   linear coordinate change of $\PP^{n+1}$
   that $p = [1:0:0: \cdots: 0]$, $q = [0:1:0: \cdots: 0]$. Hence,
   the equation that defines $X$ is a polynomial in the last $n$ variables
   $x_2, \dots, x_{n+1}$. This shows that
   the line joining $p,q$ is contained in $L_X$.
\end{re}

\begin{re} \label{re:LX}
    Let $m$ be the codimension of $L_X$ in $\PP^{n+1}$ (and $n+2$ if $L_X$ is empty). Then a suitable invertible linear change of coordinates maps $f$ into $\CC[x_0,\ldots,x_{m-1}]$. Thus the condition from \Cref{thm:Cubics} that $f$ uses at least $k+2$ 
    variables translates into the geometric condition that $L_X$ has 
    codimension at least $k+2$ in $\PP^{n+1}$, i.e., that $L_X$ has dimension $<n-k$.
\end{re}

\begin{re}
   \label{Rem.Locally-tribial-bundle}
   Using the \Cref{Rem.L_x_linear}, we can consider the projection from $L_X$. This yields a morphism
   \[
      p_X \colon \PP^{n+1} \setminus L_X \to \PP^{n - \dim L_X} \, .
   \]
   The image $p_{X}(X \setminus L_X)$ is closed in $\PP^{n - \dim L_X}$.
   Moreover, the restriction of $p_X$ to $X \setminus L_X$ gives an $\AA^{1+\dim L_X}$-bundle
   $X \setminus L_X \to p_{X}(X \setminus L_X)$, locally trivial with respect to the
   Zariski topology. In particular, $\dim p_{X}(X \setminus L_X) = n-\dim L_X-1$.
\end{re}

\begin{re}
   \label{Rem.Bound_dim_L_X}
   The hypersurface $X$ is singular along $L_X$.
   Hence, by our standing assumption: $\dim L_X \leq \dim X_{\sing} \leq n-k$.
\end{re}

The following is the main result of this section:

\begin{thm}\label{thm:n_gen}
Let $n \geq k$. We assume that $(d, k)$ is either $(3, 2)$ or $(4, 1)$ and hence
$\cquot{\PP \SS}{\SL_{k+1}} = \PP^1$.
If $\dim L_X < n-k$, then $\SetC_X = \PP^1$.
Otherwise, $\dim L_X = n-k$ and $\SetC_X$ is one point.
\end{thm}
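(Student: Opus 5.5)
The plan is to combine the irreducibility of $\Gr$ with the fact that the quotient $\cquot{\PP\SS}{\SL_{k+1}}=\PP^1$ is a curve, and to treat the two cases for $\dim L_X$ separately. By \Cref{Lem.Properties_of_Pi}, $\Pi$ is a dense open subset of the irreducible variety $\Gr$, and $\pi$ is a morphism. Hence $\SetC_X=\pi(\Pi)$ is an irreducible constructible subset of $\PP^1$, so it is either a single point or a cofinite subset of $\PP^1$. By \Cref{Rem.Bound_dim_L_X} we have $\dim L_X\le n-k$.

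\emph{Case $\dim L_X = n-k$.} By \Cref{re:LX}, after a coordinate change $f\in\CC[x_0,\ldots,x_k]_d$. Then $X$ is the cone with vertex $L_X$ over a hypersurface $Y=V(f)\subseteq\PP^k$.
\begin{enumerate}
\item Let $P\in\Pi$. If $P$ met $L_X$ in a point $x$, then $X\cap P$ would have multiplicity $d$ at $x$ (or $P\subseteq X$).
\item Such a section is not semistable: for $(3,2)$ it would be a plane cubic with a triple point, and for $(4,1)$ it would be $\ell^4$. Both lie outside $\SS$, so $P\cap L_X=\emptyset$.
\item Consequently the projection $p_X$ of \Cref{Rem.Locally-tribial-bundle} restricts to a linear isomorphism $P\to\PP^k$ carrying $X\cap P$ onto $Y$.
\end{enumerate}
Thus $\pi$ is constant on $\Pi$ with value $\eta(f)$, and $\SetC_X$ is one point.

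\emph{Case $\dim L_X<n-k$.} First I show that $\pi$ is non-constant. If it were constant, all smooth $k$-dimensional sections would be isomorphic to a fixed $Y$. I would derive a contradiction by cutting down: intersect with a general $(k+1)$-dimensional linear space $M$, chosen disjoint from $L_X$ (possible since $\operatorname{codim} L_X\ge k+2$).
\begin{enumerate}
\item By Bertini, as in \Cref{Lem.Pi_non-empty}, the section $X\cap M$ is a normal cubic surface (resp.\ a reduced plane quartic curve).
\item Since $M\cap L_X=\emptyset$, the section $X\cap M$ is not a cone with vertex in $L_X$. One must also check that a general $M$ gives a non-cone, i.e.\ that it uses $k+2$ variables; this needs a short argument comparing $L_{X\cap M}$ with $L_X\cap M$.
\item For a normal non-cone cubic surface, the plane sections through a general line form a pencil with non-constant $j$-invariant; this is classical and follows from the Hessian/flex structure. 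The analogous statement holds for binary sections of a plane quartic, via the lines through a general point.
\end{enumerate}

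Next I show that every value is attained. Write $\pi=[A:B]$ as in \Cref{Subsection.mpduli}, where $A,B$ are pullbacks of the invariants $S^3$ and $S^3+2^6T^2$ (resp.\ $D^3$ and $D^3-27E^2$) under the map $\rho$ of \Cref{Lem.Properties_of_Pi}. Globally these are sections of $\mathcal O(12)$ (resp.\ $\mathcal O(6)$) on $\Gr$ in the Plücker embedding. Their common zero locus is exactly $\Gr\setminus\Pi$ together with the planes $P\subseteq X$, i.e.\ the GIT nullcone.
\begin{enumerate}
\item Fix $c\in\PP^1$. The level set $V(B-cA)\subseteq\Gr$ is a hypersurface, since $\Gr$ is projective and $\pi$ is non-constant.
\item Every point of this hypersurface outside $V(A,B)$ lies in $\pi^{-1}(c)$.
\item So it suffices to show that $V(A,B)$ has codimension $\ge2$ in $\Gr$. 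Then $V(B-cA)\not\subseteq V(A,B)$ and $c\in\SetC_X$.
\end{enumerate}

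\emph{Main obstacle.} The hard step is the codimension estimate on the unstable locus, i.e.\ showing that the set of $P$ with $X\cap P$ unstable (cuspidal or worse for cubics; a triple root for quartics) has codimension $\ge2$. The sections that are singular at a smooth point of $X$ form the dual-variety divisor, and generically they are merely nodal, hence stable, with value $[1:0]$. Cusps occur only along a codimension-$2$ sublocus, namely tangency at parabolic points (resp.\ inflectional tangency). The delicate part is the sections through $X_{\sing}$. For these I would use that $\dim X_{\sing}\le n-k$ and that $X_{\sing}\cap L_X$ is small: the planes through a singular point $x\notin L_X$ form a codimension-$(n+1-k)$-type family, and for a general such $P$ the singularity of $X\cap P$ at $x$ is an ordinary double point. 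This uses multiplicity $2$ at $x$ together with genericity of $P$ relative to the tangent cone, and it is exactly where the hypothesis $x\notin L_X$ (multiplicity $<d$) enters. If a divisor of unstable sections survived, I would instead argue directly that the fiber closures meet $\Pi$, using the pencil through a general line/point from the previous step.
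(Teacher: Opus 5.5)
Your treatment of the case $\dim L_X=n-k$ is correct; it even covers all of $\Pi$, not only $\Pi_0$. The gap is in the surjectivity step for $\dim L_X<n-k$: the codimension estimate you rely on is false. Take $n=k=2$ and $f=x_0x_1^2+x_2^3+x_3^3$. This is a normal cubic surface whose only singular point is $q=[1:0:0:0]$, of multiplicity $2$, and $L_X=\emptyset$. For every plane $P\ni q$, the local equation of $X\cap P$ at $q$ has leading form $(x_1|_P)^2$ (or starts in degree $3$ if $P=\{x_1=0\}$). So $X\cap P$ has a cusp or worse at $q$ and is unstable. Hence the whole plane $\mathcal H_q=\{P\ni q\}\subset\Gr\cong\check{\PP}^3$ lies in $V(A,B)$, a divisor. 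The same happens for a reduced plane quartic with a triple point $q$: every line through $q$ not contained in $X$ cuts out a binary quartic with a triple root. So multiplicity $<d$ at $x$ plus genericity of $P$ does not give an ordinary double point. For cubics you need a reduced tangent cone, and for quartics multiplicity $2$. Singular points violating this produce divisorial components of the unstable locus, and this persists for $n>k$: for example, for cones over such surfaces, the $k$-planes meeting the $(n-k)$-dimensional singular locus form a divisor.

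Once $V(A,B)$ contains a divisor $\mathcal H$, $A$ and $B$ share its equation as a common factor. After removing it, the residual level set $V(B'-cA')$ can be exactly $\mathcal H$ when $\varphi|_{\mathcal H}$ is constant with value $c$, and then nothing in your argument shows $c\in\SetC_X$. This really occurs: for quartics with $f_3=x_1^2x_2$ or $f_3=x_1^3$ (notation of \Cref{Prop.Quartics}), $\varphi|_{\mathcal H}$ is constant. Your fallback via a pencil through a general line does not help, since such a pencil may meet the fiber over $c$ only at its intersection with $\mathcal H$.

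What is missing is \Cref{Lem.Technical_varphi_surj} together with an explicit analysis of these bad divisors. Using the Bruce--Wall normal forms for cubic surfaces with a rank-one tangent cone, and the normal forms of $f_3$ at quartic triple points, one must do one of the following:
\begin{enumerate}
\item show that $\varphi|_{\mathcal H}$ is non-constant;
\item exhibit another hypersurface meeting $\Pi$ with the same value (the curves $\mathcal W_\infty$ and $\mathcal K$);
\item verify surjectivity directly via Weierstrass forms.
\end{enumerate}

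For $n>k$ the paper avoids these divisors in $\Gr(k+1,n+2)$ altogether. It finds, via \Cref{Lem.reduction}, a hyperplane section satisfying the same hypotheses, and then uses $\SetC_{X\cap H}\subseteq\SetC_X$. Your cut-down to $M$ could be used the same way for surjectivity. But then all the difficulty sits in the base case $n=k$, which your proposal does not resolve.
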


Note that this immediately implies \Cref{thm:Cubics} by \Cref{re:LX} and the fact that the restriction of $\overline{\eta}$ to $\PP \SS_0$ is a geometric quotient and in particular yields a bijection between isomorphism classes of smooth cubic curves in $\PP^2$ 
(resp. isomorphism classes of sets of $4$ points in $\PP^1$) and $\AA^1$.

\smallskip

To prove \Cref{thm:n_gen}, we start with the following observation:

\begin{lem}
    \label{Lem.L_X_non-empty} If $\dim L_X = n-k$, then $\SetC_X$ is a point.
\end{lem}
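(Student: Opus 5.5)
If $\dim L_X = n-k$, then by \Cref{re:LX} we may change coordinates so that $f \in \CC[x_0,\ldots,x_{k}]_d$. Here $L_X = V(x_0,\ldots,x_k)$, which has codimension $k+1$, and $f$ really involves all of $x_0,\ldots,x_k$. Write $g := f(x_0,\ldots,x_k) \in \CC[x_0,\ldots,x_k]_d$. The plan is to show that for every $P \in \Pi$ the polynomial $h$ representing $X \cap P$ is a $\GL_{k+1}$-translate of $g$. Then $\pi$ is constant on $\Pi$, with value $\eta(g)$. Since $\Pi$ is non-empty by \Cref{Lem.Properties_of_Pi} (via our standing assumption), $\SetC_X$ is then exactly one point.

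Fix $P \in \Pi$. Consider the linear projection $\ell \colon \CC^{n+2} \to \CC^{k+1}$, $x \mapsto (x_0,\ldots,x_k)$; this is the affine cone of the projection $p_X$ from $L_X$ of \Cref{Rem.Locally-tribial-bundle}. Let $\tilde P \subseteq \CC^{n+2}$ be the $(k+1)$-dimensional linear subspace with $\PP \tilde P = P$. Then $f|_{\tilde P} = g \circ (\ell|_{\tilde P})$.

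The key step is to show that $\ell|_{\tilde P}$ is an isomorphism $\tilde P \to \CC^{k+1}$, i.e.\ that $P \cap L_X = \emptyset$. Suppose instead that $P$ contains a point $x \in L_X$. Since $X$ has multiplicity $d$ at $x$, every line through $x$ meets $X$ at $x$ with multiplicity $\geq d$. If $X \cap P \neq P$, then $X \cap P$ is a degree-$d$ hypersurface in $P$ of multiplicity $d$ at $x$, hence a cone with vertex $x$. In particular $X \cap P$ is singular at $x$: for $k \geq 1$ and $d>2$ it has multiplicity $d \geq 2$ there. This already contradicts $P \in \Pi_0$. More precisely, by \Cref{Lem.X_cap_P_singular}, $T_xX$ is all of $\PP^{n+1}$ since $x \in X_{\sing}$, so $P \subseteq T_xX$ and $X\cap P$ is singular.

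However, $\Pi$ only requires semistability, not smoothness, so the contradiction must come from semistability instead. In the case $(3,2)$, a cubic cone is a union of three concurrent lines or something worse, which is not nodal. In the case $(4,1)$, a binary quartic with multiplicity $4$ at a point is $l^4$, which has a quadruple root. Neither is semistable by the description in \Cref{Subsection.mpduli}. Hence $P \cap L_X = \emptyset$, and $\ell|_{\tilde P}$ is injective; by dimension count it is an isomorphism.

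Choosing coordinates on $P$ via $\ell|_{\tilde P}$ shows $X \cap P \simeq V(g)$. Any other choice of coordinates changes $g$ only by $\GL_{k+1}$, so $\pi(P) = \eta(g)$ for all $P \in \Pi$.

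The main obstacle I expect is the step excluding $P \cap L_X \neq \emptyset$ for $P \in \Pi$. If this turns out to be delicate, I would fall back on a density argument. Since $\pi$ is a morphism on the irreducible open set $\Pi$ (\Cref{Lem.Properties_of_Pi}), it suffices that $\pi$ is constant on the dense open subset of $\Pi$ consisting of those $P$ with $P \cap L_X = \emptyset$. This subset is dense because $\dim L_X + k = n < n+1$, so a general $P$ misses $L_X$. Constancy on this subset follows from the argument above, and constancy on all of $\Pi$ then follows by continuity into the separated target $\PP^1$.
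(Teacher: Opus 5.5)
Your proposal is correct and follows essentially the paper's argument: in both, $f$ depends only on $k+1$ coordinates (equivalently, one projects from $L_X$), so every admissible $P$ disjoint from $L_X$ carries $X\cap P$ isomorphically onto the fixed hypersurface $V(g)$. The paper only treats those $P$ with $X\cap P$ smooth, which miss $X_{\sing}\supseteq L_X$, and leaves the extension to all of $\Pi$ implicit; your observation that a section through a point of $L_X$ is a cone and hence not semistable handles the singular semistable sections explicitly, and your density fallback covers the same gap.
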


\begin{proof}
Let $P \in \Gr$ such that $X \cap P$ is smooth (it exists as $\Pi_0$ is non-epmty). 
Then
$X_{\sing} \cap P$ is empty and hence so is $L_X \cap P$.
Therefore, $p_X \colon \PP^{n+1} \setminus L_X \to \PP^k$
restricts to an isomorphism $P \xrightarrow{\sim} \PP^k$. The latter restricts to an isomorphism
\[
   X \cap P \xrightarrow{\sim} p_X(X \setminus L_X) \, ,
\]
since $p_X(X \setminus L_X)$ is a closed irreducible hypersurface in $\PP^k$.
This shows that $\SetC_X$ consists only of one point, namely 
$\bar{\eta}(p_X(X \setminus L_X)) \in \cquot{\PP \SS}{\SL_{k+1}}$. 
\end{proof}

\begin{proof}[Proof of the second statement of \Cref{thm:n_gen}]
If $\dim L_X \geq n-k$, then $\dim L_X = n-k$ by \Cref{Rem.Bound_dim_L_X}
and the second statement follows from \Cref{Lem.L_X_non-empty}.
\end{proof}

We prove the first statement of \Cref{thm:n_gen} by induction on $n \geq 2$ and distinguish between
$(n, d, k) = (2, 3, 2)$ (\Cref{Prop.n_2_L_X_non-empty}), $(n, d, k) = (1, 4, 1)$ (\Cref{Prop.Quartics}) 
and $n > k$ 
(see \Cref{Section.Strictly_greater_than_2}). For proving the surjectivity of $\pi$
we use the following observation:

\begin{lem}
    \label{Lem.Technical_varphi_surj}
    Let $\varphi \colon \PP^{n+1} \dashrightarrow \PP^1$ be a dominant rational function
    and let $U$ be an open dense subset in the domain of $\varphi$.
    Assume that every irreducible hypersurface $W \subseteq \PP^{n+1}$ contained in 
    $\PP^{n+1} \setminus U$ satisfies:
    \begin{quote}
        If $\varphi |_{W} \colon W \dashrightarrow \PP^1$ is constant, 
        then there is a hypersurface $W'$ in $\PP^{n+1}$ with $W' \cap U \neq \emptyset$ 
        and $\varphi(W') = \varphi(W)$.
    \end{quote}
    Under this assumption, $\varphi(U) = \PP^1$.
\end{lem}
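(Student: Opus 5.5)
The plan is to combine the fact that $\varphi(U)$ is constructible and dense with a description of the complement of $\varphi(U)$ through fibres. Since $\varphi$ is dominant and $U$ is open dense in its domain, $\varphi(U)$ is a dense constructible subset of $\PP^1$. Hence $\varphi(U)$ is cofinite, and we only need to show that no single point $c \in \PP^1$ is missed.

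So fix $c \in \PP^1$ and suppose, for contradiction, that $c \notin \varphi(U)$. Write $\varphi = [F:G]$ with $F,G$ homogeneous of the same degree and without common factor, and put $c = [a:b]$. Consider the hypersurface $Z_c := V(bF - aG) \subseteq \PP^{n+1}$. It is nonempty because $n+1 \geq 1$, and it has pure codimension one. The polynomial $bF-aG$ is non-constant, since otherwise $F$ and $G$ would be proportional and $\varphi$ would not be dominant.

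Outside the base locus $V(F,G)$, which has codimension at least $2$, the set $Z_c$ is exactly the fibre $\varphi^{-1}(c)$ inside $\dom(\varphi)$. Let $W$ be any irreducible component of $Z_c$. Then $W$ is an irreducible hypersurface, $W \not\subseteq V(F,G)$ for dimension reasons, and $\varphi|_W$ is constant with value $c$ on the dense open subset $W \setminus V(F,G)$. Because $c \notin \varphi(U)$, we get $W \cap U = \emptyset$, that is, $W \subseteq \PP^{n+1}\setminus U$.

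The hypothesis now applies to $W$. It yields a hypersurface $W'$ with $W'\cap U \neq \emptyset$ and $\varphi(W') = \varphi(W) = c$. A point $x \in W' \cap U$ lies in the domain of $\varphi$ and satisfies $\varphi(x) = c$, so $c \in \varphi(U)$, a contradiction. Therefore every $c \in \PP^1$ lies in $\varphi(U)$, and $\varphi(U) = \PP^1$.

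The only delicate point is the meaning of $\varphi(W') = \varphi(W)$ for a hypersurface $W'$ that is only generically in the domain. I would read it as follows: $\varphi|_{W'}$ is also constant with value $c$ on $W' \cap \dom(\varphi)$, and $U \subseteq \dom(\varphi)$. Under this reading, any point of $W' \cap U$ is mapped to $c$, which is exactly what the last step uses.
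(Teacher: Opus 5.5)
Your proof is correct and follows essentially the same route as the paper. Both arguments write $\varphi=[F_0:F_1]$ with coprime components, take an irreducible component $W$ of the fibre hypersurface $V(b_1F_0-b_0F_1)$, observe that $W$ avoids $U$ if $[b_0:b_1]$ is missed, and then use the hypothesis to get a $W'$ meeting $U$ with the same value. Your reading of $\varphi(W')$ as $\varphi(W'\cap\dom(\varphi))$ matches the paper's stated convention, and your opening remark that $\varphi(U)$ is constructible and cofinite is never used, so it can be dropped.
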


Denote by $\dom(\varphi)$ the domain of $\varphi$.
As $\PP^{n+1} \setminus \dom(\varphi)$ 
is of codimension $\geq 2$, the restriction 
of $\varphi$ to every hypersurface $W$ in $\PP^{n+1}$ is a well-defined rational map 
and we denote $\varphi(W) = \varphi(W \cap \dom(\varphi))$.

\begin{proof}
    There are homogeneous polynomials  $F_0, F_1 \in \kk[a_0, \ldots, a_{n+1}]$ 
    of the same degree without a common factor such that
    $\varphi(a) = [F_0(a): F_1(a)]$.
    Let $[b_0: b_1] \in \PP^1$.
    The fiber of $\varphi |_{\dom(\varphi)}$ over
   $[b_0:b_1]$ is the intersection of $\dom(\varphi)$ with the 
   (non-empty) hypersurface $Z \subseteq \PP^{n+1}$ given by
   $b_1 F_0 - b_0 F_1$.
   Towards a contradiction, we assume that $Z$ is contained 
   $\PP^{n+1} \setminus U$ and we choose an irreducible 
   hypersurface $W \subseteq \PP^{n+1}$ contained in $Z$.  
   Since $\varphi(Z) = \{ [b_0:b_1] \}$, 
   the assumption in the lemma guarantees that there is a hypersurface $W'$ with 
   $W' \cap U \neq \emptyset$ and 
   $\varphi(W') = \varphi(W) = \{ [b_0:b_1] \}$. Hence, $[b_0: b_1] \in \varphi(U)$, contradiction.
\end{proof}

We will also use the dual variety $\hat{X} \subseteq \Gr(n+1,n+2)$ of $X$. 
By definition, $\hat{X}$ is the closure of the image
of the Gauss map $\gamma \colon X_{\sm} \to \Gr(n+1,n+2)$, where $\gamma(x) = T_x X$ for 
$x \in X_{\sm}$. Alternatively, $\hat{X}$ is the image of 
$I_X \subseteq X \times \Gr(n+1,n+2)$ under the projection $X \times \Gr(n+1,n+2) \to \Gr(n+1,n+2)$, where
\[
   I_X \coloneqq \overline{\set{(x, P) \in X_{\sm} \times \Gr(n+1,n+2)}{P = T_x X}} \subseteq 
   X \times \Gr(n+1,n+2) \, .
\]

\subsection{The case where \texorpdfstring{$(n, d, k) = (2, 3, 2)$}{(n, d, k) = (2, 3, 2)}}

The goal of this subsection is to prove the first statement of 
\Cref{thm:n_gen} in case $n = k = 2$ and $d = 3$.
Hence, $X$ is a cubic surface in $\PP^3$ that is normal (by our standing assumption) and we intersect $X$ with planes $P \in \Gr=\Gr(3,4)$ of $\PP^3$.
As $X$ is a normal irreducible surface, $P \not\subseteq X$ for every $P \in \Gr$.

\smallskip

First we characterize the emptyness of $L_X$:

\begin{lem}
   \label{Lem.Char_L_X_empty}
   The following statements are equivalent:
   \begin{enumerate}[label=$(\arabic*)$]
      \item \label{Lem.Char_L_X_empty1} $L_X$ is empty;
      \item \label{Lem.Char_L_X_empty2} $X$ contains finitely many lines in $\PP^3$;
      \item \label{Lem.Char_L_X_empty3}
            $\dim \hat{X} = 2$;
      \item \label{Lem.Char_L_X_empty4}  for sufficiently general $x \in X$,
            $X \cap T_x X$ is an integral curve with exactly one singularity, which moreover is a node;
      \item \label{Lem.Char_L_X_empty5}
            $\pi \colon \Pi \to \PP^1$ is non-constant.
   \end{enumerate}
   %If $L_X$ is empty, then $\pi \colon \Pi_{0} \to \AA^1$ is non-constant.
\end{lem}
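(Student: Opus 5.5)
I will prove the equivalences as a cycle built around the key implication \ref{Lem.Char_L_X_empty1}$\Rightarrow$\ref{Lem.Char_L_X_empty2}. Throughout I use that $X$ is a normal, hence irreducible, cubic surface with isolated singularities, and that a point of multiplicity $3$ on a cubic surface makes $X$ a cone over a plane cubic curve. So \ref{Lem.Char_L_X_empty1} fails exactly when $X$ is a cone with vertex $v$ (here $L_X=\{v\}$). Normality forces the base curve to be reduced and irreducible, hence smooth or nodal/cuspidal. Lines through the vertex are then not isolated.

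\textbf{\ref{Lem.Char_L_X_empty1}$\Rightarrow$\ref{Lem.Char_L_X_empty2}.} I would invoke the classical fact that a normal cubic surface which is not a cone contains only finitely many lines (at most $27$). If I have to argue it directly: an infinite family of lines sweeps out the irreducible surface $X$, so $X$ is ruled. By the classification of normal cubic surfaces, a ruled one with isolated singularities is a cone, since the non-normal ruled cubics are singular along a line.

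\textbf{\ref{Lem.Char_L_X_empty2}$\Rightarrow$\ref{Lem.Char_L_X_empty4}.} For $x\in X_{\sm}$, the curve $X\cap T_xX$ is a plane cubic singular at $x$ by \Cref{Lem.X_cap_P_singular}. If it contained a line, then by genericity every general $x$ would lie on a line of $X$, giving infinitely many lines. So for general $x$ it is an integral cubic with a singular point at $x$, and an integral plane cubic has exactly one singular point. It remains to exclude a cusp at a general point. A cusp at $x$ means $x$ is a parabolic point, where the Hessian of $f$ vanishes. If every point were parabolic, the Hessian would vanish on $X$ and the Gauss map would be degenerate. This is where \ref{Lem.Char_L_X_empty3} enters.

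\textbf{\ref{Lem.Char_L_X_empty3} and \ref{Lem.Char_L_X_empty5}.} I would show \ref{Lem.Char_L_X_empty2}$\Rightarrow$\ref{Lem.Char_L_X_empty3} as follows. If $\dim\hat X\le 1$, the general fiber of $\gamma$ is a curve along which $T_xX$ is constant. That tangent plane then meets $X$ in a cubic singular along a curve, hence containing a line with multiplicity, and this again produces infinitely many lines. I would also use the classical fact that in characteristic $0$ a surface with degenerate Gauss map is a cone or developable, and hence contains infinitely many lines; this closes \ref{Lem.Char_L_X_empty3}$\Rightarrow$\ref{Lem.Char_L_X_empty2}. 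With $\dim\hat X=2$ the Hessian does not vanish identically on $X$, so the general tangent section is nodal, finishing \ref{Lem.Char_L_X_empty4}.

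For \ref{Lem.Char_L_X_empty4}$\Rightarrow$\ref{Lem.Char_L_X_empty5}, nodal cubics lie in $\SS$ and map to $[1:0]$, while smooth sections map into $\AA^1$. By \Cref{Lem.Properties_of_Pi} both kinds of plane occur in $\Pi$, so $\pi$ takes at least two values. Finally, \ref{Lem.Char_L_X_empty5}$\Rightarrow$\ref{Lem.Char_L_X_empty1} is the contrapositive of \Cref{Lem.L_X_non-empty}: if $L_X\neq\emptyset$, then $\dim L_X=0=n-k$ and $\pi$ is constant.

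The main obstacle is the geometric input that a normal non-cone cubic surface has finitely many lines and a nondegenerate Gauss map, especially ruling out developable, tangent-surface-type cases. I would settle this by citing the classification of singular normal cubic surfaces, in which every non-cone has at most $27$ lines, rather than reproving it.
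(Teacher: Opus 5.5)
Your implications (1)$\Rightarrow$(2)$\Rightarrow$(4)$\Rightarrow$(5)$\Rightarrow$(1) and (2)$\Rightarrow$(3) are sound. However, nothing in the proposal derives any other statement from (3), so the cycle does not close. Both arguments in your third paragraph start from $\dim\hat X\le 1$ and conclude that $X$ contains infinitely many lines. This holds for the Gauss-fibre argument, where a tangent plane constant along a curve meets $X$ in a cubic with a double line. It also holds for the fact that a surface with degenerate Gauss map is a cone or developable. Both therefore prove $\neg$(3)$\Rightarrow\neg$(2), i.e.\ (2)$\Rightarrow$(3) again, not (3)$\Rightarrow$(2) as you claim. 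Your step towards (4) also presupposes (2), so it cannot serve as an exit from (3).

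The missing direction is easy to supply. If $L_X\neq\emptyset$, then $X$ is a cone with vertex $v$. In coordinates with $v=[1:0:0:0]$, the polynomial $f$ does not involve $x_0$, so $[\nabla f]$ is constant along each line through $v$ (away from $v$). Hence the Gauss map is constant on the rulings and $\dim\hat X\le 1$, which gives (3)$\Rightarrow$(1). A minor point: normality forces the base curve of the cone to be \emph{smooth}, as the paper says, since a cone over a nodal or cuspidal cubic is singular along a line. This does not affect your argument.

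With this repair, your route differs from the paper's in two places. For (2)$\Leftrightarrow$(3) the paper simply cites Tevelev's Theorem~1.18, while you prove (2)$\Rightarrow$(3) directly from the fibres of $\gamma$. The more substantial difference is how cusps are excluded. The paper takes a general $P\in\hat X$ and checks that it is tangent to $X$ only at finitely many smooth points. It then applies Dimca's formula $\sum_i\mu(X\cap P,x_i)=1$ to force a single tangency point with Milnor number $1$, i.e.\ a node. You instead use the second fundamental form. By Euler's formula, $\det\operatorname{Hess}(f)$ vanishes at a smooth $x\in X$ exactly when the quadratic part of $X\cap T_xX$ at $x$ is degenerate. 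The rank of $d\gamma_x$ equals the rank of that quadratic form. In characteristic $0$, generic smoothness makes this rank $2$ at a general point once $\dim\hat X=2$, so the general tangent section has a node at $x$. Your argument is more elementary and self-contained; the paper's is shorter given the citation.
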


\begin{proof}
   \ref{Lem.Char_L_X_empty1} $\Leftrightarrow$ \ref{Lem.Char_L_X_empty2}: $L_X$ is nonempty 
   if and only if $X$ is a cone over a smooth plane cubic curve. Hence, the equivalence
   follows from the classification of normal cubic surfaces, see e.g. \cite[\S3]{BrWa1979On-the-classificat} or \cite[Theorem~1]{Sa2010Automorphism-group}.

   \ref{Lem.Char_L_X_empty2} $\Leftrightarrow$ \ref{Lem.Char_L_X_empty3}: This is a
   consequence of \cite[Theorem~1.18]{Te2005Projective-duality}, since 
   \ref{Lem.Char_L_X_empty1} $\Leftrightarrow$ \ref{Lem.Char_L_X_empty2}.

   \ref{Lem.Char_L_X_empty3} $\Rightarrow$ \ref{Lem.Char_L_X_empty4}:
   Let $P \in \hat{X}$ such that
   \begin{enumerate}[label={\roman*)}]
      \item~\label{Property1} $\hat{X}$ is smooth at $P$
      %\item~\label{Property2} $X \cap P \subseteq X_{\sm}$
      \item~\label{Property3} there are only finitely many $x \in X$ with $(x, P) \in I_X$
      \item~\label{Property4} $X$ is smooth at the elements satisfying~\ref{Property3}.
   \end{enumerate}
   Let $x_1, \dots, x_k \in X$ be the elements satisfying~\ref{Property3}.
   By~\ref{Property4} $X$ is smooth at $x_i$ for all $i$ and thus 
   $T_{x_i} X = P$ for all $i$. We have
   \begin{equation}
      \label{Eq.Dimca}
      1 = \sum_{i=1}^k \mu(X \cap P, x_i) \, ,
   \end{equation}
   where $\mu(X \cap P, x_i)$ denotes the Milnor number of $X \cap P$ at $x_i$ 
   (see \cite{Di1986Milnor-numbers-and}). By \Cref{Lem.X_cap_P_singular} we have that
   $X \cap P$ is singular at $x_i$ and hence 
   $\mu(X \cap P, x_i) > 0$ for all $i$. Using~\eqref{Eq.Dimca} we get $k = 1$ 
   and $\mu(X \cap P, x_1) = 1$.
   
   \smallskip

   Now, if $x \in X_{\sm}$ such that $x$ does not belong to any line inside $X$,
   then 
   \begin{enumerate}[label={\alph*)}]
      \item \label{Property_curve1} $X \cap T_x X$ is an integral curve
      \item \label{Property_curve2} $x$ is the unique singularity of $X \cap T_x X$ and it is a node or an ordinary cusp
   \end{enumerate}
   by the classification of plane cubic curves, see e.g.\ \cite[\S2.2]{EiHa20163264-and-all-that-}.
   Since $X$ contains
   only finitely many lines 
   (by~\ref{Lem.Char_L_X_empty3} $\Rightarrow$ \ref{Lem.Char_L_X_empty2}), we have that 
   \ref{Property_curve1} and \ref{Property_curve2} are satisfied for sufficiently general $x \in X$.
   Since the Milnor number of a plane node is $1$ and for a plane ordinary cusp is $2$, 
   by the proceeding paragraph 
   it is enough to show that sufficiently general $P \in \hat{X}$ satisfy \ref{Property1}, \ref{Property3} and~\ref{Property4}.

   As $I_X$ is the closure in $X \times \Gr$ of the graph of the Gauss map $\gamma \colon X_{\sm} \to \Gr$, the projection $q \colon I_X \to X$ is dominant. 
   Hence $q^{-1}(X_{\sing})$ has dimension $\leq 1$ and thus $Z \coloneqq p(q^{-1}(X_{\sing}))$
   is a proper closed subset of $\hat{X}$ (since $\dim \hat{X} = 2$). Thus for 
   $P \in \hat{X} \setminus Z$ the following holds: If $(x, P) \in I_X$, then $x \in X_{\sm}$. 
   Now take a dense open subset $U \subseteq (\hat{X})_{\sm} \setminus Z$ such that 
   $\gamma \colon X_{\sm} \to \Gr$ has finite fibers over $U$. Then every $P \in U$
   satisfies~\ref{Property1}, \ref{Property3} and~\ref{Property4}.

   \ref{Lem.Char_L_X_empty4} $\Rightarrow$ \ref{Lem.Char_L_X_empty5}: For sufficiently general $x \in X$
   the singular nodal cubic curve $X \cap T_x X$ is sent onto the point at infinity via $\pi$.
   As $\pi(\Pi_0) \subseteq \AA^1$ is non-empty, $\pi \colon \Pi \to \PP^1$ is non-constant. 

   \ref{Lem.Char_L_X_empty5} $\Rightarrow$ \ref{Lem.Char_L_X_empty1}: This follows
   from \Cref{Lem.L_X_non-empty}.
\end{proof}

\begin{lem}
   \label{Lem.Ak-singularity}
   Assume $L_X$ is empty and $X$ is singular at $x$ with reduced tangent cone. Then 
   for sufficiently general $P \in \Gr$ with $x \in P$, the curve $X \cap P$ has a node at $x$. 
\end{lem}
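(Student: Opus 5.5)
Place $x$ at $[1:0:0:0]$ and expand the cubic in the affine chart $x_0 = 1$:
\[
f(1,x_1,x_2,x_3) = f_2(x_1,x_2,x_3) + f_3(x_1,x_2,x_3).
\]
There is no constant term because $x \in X$, and no linear term because $X$ is singular at $x$. The tangent cone of $X$ at $x$ is the conic $\{f_2 = 0\}$. The hypothesis $L_X = \emptyset$ guarantees that $X$ does not have multiplicity $3$ at $x$, so $f_2 \neq 0$. Reducedness of the tangent cone then says that $f_2$ is not the square of a linear form, i.e.\ $\operatorname{rank} f_2 \geq 2$. In other words, the tangent cone is either a smooth conic or a pair of distinct planes through $x$.

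A plane $P$ through $x$ corresponds to a $2$-dimensional linear subspace $\Lambda \subseteq \kk^3$ in the coordinates $x_1,x_2,x_3$. In the chart, $X \cap P$ is given by $f_2|_\Lambda + f_3|_\Lambda$. As in the proof of \Cref{Lem.X_cap_P_singular}, the point $x$ is a node of $X \cap P$ exactly when the restricted quadratic part $f_2|_\Lambda$ is a binary quadratic form of rank $2$. Indeed, a plane curve singularity whose lowest-order term is a nondegenerate quadratic form is an ordinary double point, by the holomorphic Morse lemma or simply because the tangent cone consists of two distinct lines. Note that $f_2|_\Lambda$ cannot vanish identically for general $\Lambda$, and this also shows $P \not\subseteq X$.

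So the statement reduces to linear algebra. If $q$ is a ternary quadratic form of rank $\geq 2$, then its restriction to a general $2$-dimensional subspace $\Lambda$ has rank $2$. Rank $2$ of the restriction is an open condition on $\Lambda \in \Gr(2,3) \cong \PP^2$, namely nonvanishing of the discriminant of $q|_\Lambda$. So it suffices to exhibit one good $\Lambda$.
\begin{itemize}
\item If $\operatorname{rank} q = 3$, take $q = x_1^2 + x_2^2 + x_3^2$ and $\Lambda = \{x_3 = 0\}$.
\item If $\operatorname{rank} q = 2$, take $q = x_1 x_2$ and $\Lambda = \{x_3 = 0\}$.
\end{itemize}
In both cases the restriction has rank $2$. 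Hence for $P$ in a dense open subset of the planes through $x$, the curve $X \cap P$ has a node at $x$.

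The only delicate point is the first step: justifying that $f_2$ is a nonzero quadric of rank at least $2$. This uses the reducedness of the tangent cone together with $x \notin L_X$, which follows from $L_X = \emptyset$. The remaining local-analytic fact, that a nondegenerate quadratic leading term gives a node, is standard.
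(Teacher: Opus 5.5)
Your proposal is correct and follows essentially the same route as the paper. You place $x$ at $[1:0:0:0]$ and use $L_X=\emptyset$ together with reducedness of the tangent cone to get $f_2\neq 0$ not a square; the paper then normalizes $f_2$ to $x_1x_3-x_2^2$ or $x_1x_2$, and both arguments conclude that $X\cap P$ has a node at $x$ once $f_2$ restricted to the plane is nondegenerate. The only cosmetic difference is that the paper parametrizes the planes as $x_1=a_2x_2+a_3x_3$ and writes the open condition explicitly ($a_2^2+4a_3\neq 0$, resp.\ $a_3\neq 0$), whereas you use openness of the discriminant condition together with a single witness plane.
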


\begin{proof}
   After changing the coordinates, we may assume $x = [1:0:0:0]$ and since $X$
   is singular at $x$, the hypersurface $X$ is given by
   \[
      f = x_0 f_2(x_1, x_2, x_3) + f_3(x_1, x_2, x_3) \, ,
   \]
   where $f_i$ is homogeneous of degree $i$. 
   Note that $f_2 \neq 0$, since $L_X$ is empty.
   Up to a linear coordinate change in 
   $x_1, x_2, x_3$ we may assume that $f_2 \in \{x_1^2, x_1 x_2, x_1 x_3 - x_2^2\}$.
   Since the tangent cone of $X$ at $x$ is reduced, we have $f_2 \neq x_1^2$.

   \textbf{Case 1: $f_2 = x_1 x_3-x_2^2$.} In the chart $\{ x_0 \neq 0 \}$, the intersection of
   $X \cap P$ is given by
   \begin{equation}
      \label{Eq.Node1}
      (a_2 x_2 + a_3 x_3) x_3 - x_2^2 + \textrm{terms of order $3$} \, ,
   \end{equation}
   where $P$ is given by $x_1 - a_2 x_2 - a_3 x_3$. If the discriminant $a_2^2+4a_3 \neq 0$, 
   then $(a_2 x_2 + a_3 x_3) x_3-x_2^2$ is not a square and 
   thus~\eqref{Eq.Node1} defines a node at $(0, 0, 0)$.

   \textbf{Case 2: $f = x_1 x_2$.} In the chart $\{ x_0 \neq 0 \}$, the intersection of
   $X \cap P$ is given by
   \begin{equation}
      \label{Eq.Node2}
      (a_2 x_2 + a_3 x_3) x_2 + \textrm{terms of order $3$} \, ,
   \end{equation}
   where $P$ is given by $x_1 - a_2 x_2 - a_3 x_3$. Now $X \cap P$
   has a node at $x$ as long as $a_3 \neq 0$.
\end{proof}

\begin{re}
   \label{Rem.Nodes}
   If $P \cap X$ has a node, then all singularities of $P \cap X$ are nodes 
   by the classification of plane cubics 
   (see~\cite[\S2.2]{EiHa20163264-and-all-that-}).
\end{re}

\begin{prop}
   \label{Prop.n_2_L_X_non-empty}
   If $L_X$ is empty, then $\SetC_X = \PP^1$.
\end{prop}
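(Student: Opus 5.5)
We will deduce $\SetC_X = \PP^1$ from \Cref{Lem.Technical_varphi_surj}, applied to the rational map given by $\pi$ on the dual projective space. Here $\Gr = \Gr(3,4) \cong \PP^3$ (so $n+1 = 3$). Let $\varphi \colon \PP^3 \dashrightarrow \PP^1$ be the rational map that sends a plane $P$ to $\eta(f|_P)$. In the chart of \Cref{Lem.Properties_of_Pi} it is given by $[S^3(\rho(M)) : S^3(\rho(M)) + 2^6 T^2(\rho(M))]$, which is polynomial in the plane coordinates; hence $\varphi$ is a rational map that agrees with $\pi$ on $U := \Pi$. Since $L_X = \emptyset$, \Cref{Lem.Char_L_X_empty} gives that $\pi$ is non-constant, so $\varphi$ is dominant. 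By \Cref{Lem.Properties_of_Pi}, $U$ is open and dense.

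It then remains to verify the hypothesis of \Cref{Lem.Technical_varphi_surj}. So let $W \subseteq \PP^3 \setminus \Pi$ be an irreducible hypersurface, i.e.\ an irreducible surface of planes $P$ with $X \cap P \notin \SS$. Such a plane section is a plane cubic with a singularity worse than a node (a cusp, a tacnode, three concurrent lines, a double line, etc.), and in particular it is singular. By \Cref{Lem.X_cap_P_singular}, every $P \in W$ is therefore tangent to $X$ at some point: either $P \in \hat X$, or $P$ passes through a point of $X_{\sing}$. We split into two cases.

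\textbf{Case 1: $W = \hat X$.} By \Cref{Lem.Char_L_X_empty}\ref{Lem.Char_L_X_empty4}, a general $P \in \hat X$ cuts $X$ in an integral curve with exactly one node. Such a curve lies in $\SS$, so $\hat X \cap \Pi \neq \emptyset$, which contradicts $W \subseteq \PP^3 \setminus \Pi$. Hence this case cannot occur.

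\textbf{Case 2: $W$ is the plane $x^\vee \cong \PP^2$ of planes through a singular point $x \in X_{\sing}$.} Since $X$ is normal, $X_{\sing}$ is finite, and these planes are the only other candidates for $W$. If the tangent cone at $x$ is reduced, \Cref{Lem.Ak-singularity} together with \Cref{Rem.Nodes} shows that a general $P \ni x$ gives a nodal cubic. Such a cubic lies in $\SS$, so $W$ meets $\Pi$, again a contradiction. If the tangent cone is non-reduced (the cuspidal-type singularities $A_k$ with $k \geq 2$, and worse), every $P \ni x$ gives a non-semistable cubic, so $W \subseteq \PP^3 \setminus \Pi$ genuinely occurs. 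In this situation $\varphi|_W$ is constant with value $\eta$ of the cuspidal locus, namely the point $[1:1]$, where $S = 0$ and $T = 0$ (possibly $\varphi|_W$ is even undefined). We then need some other hypersurface $W'$ with $W' \cap \Pi \ne \emptyset$ and $\varphi(W') = [1:1]$. The natural candidate is $W' := \overline{\varphi^{-1}([1:1]) \cap \Pi}$, the locus of planes whose section is the equianharmonic smooth cubic ($j = 0$). It is nonempty provided $[1:1] \in \varphi(\Pi)$.

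The main obstacle is exactly this last point: showing that the $j=0$ value is attained on $\Pi$ and not only on the bad planes $x^\vee$. The first thing I would try is to note that $\varphi^{-1}([1:1])$ is the hypersurface $\{T^2 = 0\}$ (in the chart). It is a whole hypersurface in $\PP^3$, so it cannot equal the finite union of the planes $x^\vee$ for $x$ with non-reduced tangent cone unless $T$ vanishes on precisely those planes. I would exclude that possibility by a degree count. $T$ has degree $6$ in the coefficients of the cubic, and these coefficients are cubic in the plane coordinates, so $T$ has degree $18$ in the plane coordinates. Moreover, a normal cubic surface has at most a few singular points of $A_{\ge 2}$ type (at most three, by the classification in \cite{BrWa1979On-the-classificat}). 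I would compare the vanishing order of $T$ along each $x^\vee$ with these bounds to show that $T$ has an additional component meeting $\Pi$. If this computation becomes unwieldy, I would fall back on the classification of normal cubic surfaces with an $A_{\geq 2}$ singularity and check directly in normal forms that a smooth section with $j=0$ exists.
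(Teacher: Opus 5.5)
Your framework matches the paper's. You apply \Cref{Lem.Technical_varphi_surj} to $\varphi$ and write $\Gr\setminus\Pi_0$ as $\hat X$ together with the planes $\mathcal H_x$ of planes through singular points $x$. You then correctly dispose of $\hat X$ (via \Cref{Lem.Char_L_X_empty}) and of the $\mathcal H_x$ for points with reduced tangent cone (via \Cref{Lem.Ak-singularity} and \Cref{Rem.Nodes}). The gap is in the one remaining case, which is where all the work lies.

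First, a misidentification. The points with non-reduced tangent cone are those where $f=x_0x_1^2+f_3$, i.e.\ the $D_4$, $D_5$, $E_6$ points. The $A_k$ points with $k\ge 2$ have tangent cone $x_1x_2$ and are already handled by \Cref{Lem.Ak-singularity}.

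More seriously, your claim that $\varphi|_W$ is constant with value ``$\eta$ of the cuspidal locus'' is unfounded. For $P\in W$ the section has a cusp or worse, so it is unstable. Hence $S$ and $T$ both vanish identically on $W$, your expression $[S^3:S^3+2^6T^2]$ is $[0:0]$ there, and $\eta$ is simply undefined on such cubics. (With your formula, $[1:1]$ is the locus $T=0\neq S$, not $S=T=0$, and it is not the $j=0$ locus $S=0$.) The restriction $\varphi|_W$ only makes sense after dividing both coordinates by their gcd, which contains a power of the linear form cutting out $W$. It records limiting moduli of nearby smooth sections, and these can vary along $W$; it is never ``undefined'', since the indeterminacy locus has codimension $\ge 2$. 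Indeed, for the two $D_4$ normal forms $x_0x_1^2+x_2^3+x_3^3\,(+x_1x_2x_3)$ the paper computes $\varphi|_{\mathcal H}$ as $[a_1:a_2:a_3]\mapsto[-(a_2a_3)^3:2^4(a_2^3-a_3^3)^2]$, which is non-constant. So you do not know which value $W'$ must hit, and the target $[1:1]$ has no justification.

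Second, even with the correct target value $c$, proving $c\in\varphi(\Pi)$ is exactly the content of the proposition for that $c$, and you leave it open. A bare degree count on $T\circ\rho$ cannot settle it. Nothing controls the order of vanishing of $T\circ\rho$ along the highly degenerate planes through a $D_4$, $D_5$ or $E_6$ point.

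What is missing is an explicit analysis in normal forms, which is what the paper does:
\begin{itemize}
\item By Bruce--Wall, if some point $q$ has non-reduced tangent cone, then $f$ is one of four normal forms and $q$ is the unique singular point.
\item In the two $D_4$ forms, one computes $\varphi|_{\mathcal H}$ and finds it non-constant, so the hypothesis of \Cref{Lem.Technical_varphi_surj} holds.
\item In the $D_5$ and $E_6$ forms, one bypasses the lemma. One exhibits explicit families of plane sections realizing every smooth Weierstrass cubic $x_2^2x_3=x_1^3+ax_1x_3^2+bx_3^3$, so $\pi(\Pi_0)=\AA^1$. Then $\hat X\cap\Pi\neq\emptyset$ supplies the point at infinity.
\end{itemize}
Your ``fallback'' gestures in this direction, but as written the proposal does not prove the proposition.
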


\begin{proof}
   The morphism $\pi$ represents a rational map 
   $\varphi \colon \Gr \simeq \PP^3 \dashrightarrow \PP^1$.
   Due to \Cref{Lem.Technical_varphi_surj} 
   it is enough to show that the restriction of $\varphi$ to 
   every closed irreducible hypersurface inside $\Gr \setminus \Pi$ is non-constant.
   
   Denote by $q_1, \dots, q_r$ the singularities of $X$. Then $\Gr \setminus \Pi_0$ is the union of the dual variety $\hat{X}$ and the 
   hyperplanes $\mathcal{H}_i \coloneq \set{P \in \Gr}{q_i \in P}$ 
   for $i = 1, \dots, r$. Hence,
   \[ \Gr \setminus \Pi = (\hat{X} \setminus \Pi) \cup \bigcup_{i=1}^r (\mathcal{H}_i \setminus \Pi).\]
   Note that, since $L_X$ is empty, $\hat{X}$ is an irreducible surface (\Cref{Lem.Char_L_X_empty}) and so are all $\mathcal{H}_i$.
   Since $\Pi \cap \hat{X} \neq \emptyset$ (see \Cref{Lem.Char_L_X_empty}),
   we have $\dim(\hat{X} \setminus \Pi) \leq 1$. 
   Furthermore, if the tangent cone of $X$ at $q_i$ is reduced, then $\mathcal{H}_i \cap \Pi$
   is non-empty due to \Cref{Lem.Ak-singularity} and \Cref{Rem.Nodes}
   and $\dim \mathcal{H}_i \setminus \Pi \leq 1$.
    
   Hence, every irreducible hypersurface in $\Gr \setminus \Pi$ coincides with some $\mathcal{H}_i$, corresponding to a point $q_i$ at which the tangent cone of $X$ is non-reduced. In particular, we may assume that the tangent cone of $X$ at a singular point, 
   say $q = [1:0:0:0]$, is non-reduced.
   After a linear change of coordinates, we have
   $f = x_0 x_1^2 + f_3(x_1, x_2, x_3)$
   for a suitable homogeneous polynomial $f_3$ of degree $3$.
   By the proof of 
   \cite[Lemma~4]{BrWa1979On-the-classificat}, 
   up to a linear coordinate change, $f$ is one of the following polynomials:
   \begin{enumerate}[label={\alph*)}]
      \item \label{Case1} $x_0 x_1^2 + x_2^3 + x_3^3$
      \item \label{Case2} $x_0 x_1^2 + x_2^3 + x_3^3 + x_1 x_2 x_3$
      \item \label{Case3} $x_0 x_1^2 + x_2^2 x_3 + x_1 x_3^2$
      \item \label{Case4} $x_0 x_1^2 + x_2^3 + x_1 x_3^2$ 
   \end{enumerate}
   By the classification of normal cubic surfaces, in each case 
   $q$ is the unique singularity 
   (see \cite[\S3]{BrWa1979On-the-classificat} or \cite[Theorem~1]{Sa2010Automorphism-group}). 
   In order to conclude the proof, 
   in Cases~\ref{Case1}, \ref{Case2}, we will show that $\varphi |_{\mathcal{H}}$
   is non-constant for $\mathcal{H} \coloneqq \set{P \in \Gr}{q \in P}$ and
   in Cases~\ref{Case3}, \ref{Case4}, we will prove the surjectivity
   of $\pi = \varphi |_{\Pi}$ directly.

   \textbf{Cases~\ref{Case1}, \ref{Case2}}: 
   In order to treat these cases simultaneously, 
   we consider 
   the polynomial $x_0 x_1^2 + x_2^3 + x_3^3 + \lambda x_1 x_2 x_3$, where $\lambda \in \kk$. Now $\varphi$ is given by
   \[
      \varphi \colon \PP^3 \dashrightarrow \PP^1 \, , \quad
      [a_0: a_1: a_2 : a_3] \mapsto 
      [S^3(g) : 2^6 S^3(g) + T^2(g)] \, 
   \]
   where $g = (a_1 x_1 + a_2 x_2 + a_3 x_3) x_1^2 - a_0(x_2^3 + x_3^3 + \lambda x_1 x_2 x_3)$
   and $S, T$ are introduced in \Cref{Subsection.mpduli}. 
   The elements of $\mathcal{H}$ correspond to the tuples 
   $[0: a_1 : a_2 : a_3]$. The restriction of $\varphi$
   to the hyperplane $\mathcal{H}$ is then given by the rational map
   \[
      \PP^2 \dashrightarrow \PP^1 \, , \quad [a_1: a_2 : a_3] 
      \mapsto [-(a_2a_3)^3: 2^4 (a_2^3 - a_3^3)^2] \, ;
   \]
   see the computation using SageMath from \Cref{SageMathCode}. 
   In particular, this restriction is not constant, so $\mathcal{H}$ is not contained in the single fiber $Z$.

   \textbf{Case~\ref{Case3}}: In this case we will show by explicit computations that intersecting $X$ with suitable planes we obtain enough curves for $\pi$ to hit all of $\PP^1$. Let $a_1, a_3 \in \kk$. Then the intersection 
   of the hyperplane given by $x_0 + a_1 x_1 + a_3 x_3=0$ with $X$ is isomorphic
   to the curve in $\PP^2$ given by
   \[
      x_2^2 x_3 = a_1 x_1^3 + a_3 x_1^2 x_3 -x_1 x_3^2 \, .
   \]
   If $a_1 = 0$, the the above curve is reducible, hence not smooth. We assume in the following
   that $a_1 \neq 0$. We multiply $x_2$ by $\sqrt{a_1}$ and then arrive at the curve given by
   \[
      x_2^2 x_3 = x_1^3 + a_3 a_1^{-1} x_1^2 x_3 - a_1^{-1} x_1 x_3^2 \, .
   \]
   Let $\lambda \coloneqq \frac{a_3 a_1^{-1}}{3}$. After replacing $x_1$ by $x_1 - \lambda x_3$,
   we get the curve in $\PP^2$ given by
   \[
      x_2^2 x_3 = x_1^3 + (-3\lambda^2-a_1^{-1}) x_1 x_3^2 + (2 \lambda^3 + a_1^{-1} \lambda) x_3^3 \, .
   \]
   We claim that the image of
   \[
      f \colon (\AA^1 \setminus \{0\}) \times \AA^1 \to \AA^2 \, , \quad
      (c, \nu) \mapsto (-3\nu^2-c, 2 \nu^3 + c \nu)
   \]
   contains the complement of the curve 
   $C \coloneqq \set{(-3 \nu^2, 2 \nu^3)}{\nu \in \kk}$ in $\AA^2$.
   Indeed, note that $f$ is the restriction to $(\AA^1 \setminus \{0\}) \times \AA^1$ of
   the following composition:
   \[
         F \colon \AA^2 \xrightarrow{ (c, \nu)  \mapsto (3\nu^2+c, \nu)} 
         \AA^2 \xrightarrow{(w, \nu) \mapsto (-w, \nu w-\nu^3)} \AA^2  \, .
   \]
   The first map of the composition above is an automorphism, 
   while the second one is surjective and hence $F$ is surjective. Since
   $F$ maps $\{0\} \times \AA^1$ onto $C$, the claim follows. Now 
   $x_2^2 x_3 = x_1^3 + a x_1 x_3^2 + b x_3^3$ is smooth if and only if the discriminant
   $4a^3+27b^2$ is non-zero, and this happens if and only if $(a, b)$ is not contained in the curve $C$. Moreover, all isomorphism classes of smooth cubic curves have such a Weierstrass form, so we conclude that $\pi(\Pi_0)=\AA^1$. Since $\pi$ maps the non-empty set $\hat{X} \cap \Pi$ to the point at infinity, we are done.

   \textbf{Case~\ref{Case4}}: Let $a_1, a_2 \in \kk$. Then the intersection 
   of the hyperplane given by $x_0 - a_1 x_1 - a_2 x_2$ with $X$ is isomorphic
   to the curve in $\PP^2$ given by $(a_1 x_1 + a_2 x_2) x_1^2 + x_2^3 + x_1 x_3^2$.
   % \[
   %    x_1 x_3^2 = -a_1 x_1^3 -a_2 x_1^2 x_2 - x_2^3 \, .
   % \]
   Multiplying $x_3$ with $\i \in \kk$ yields the curve in $\PP^2$ given by
   \[
      x_1 x_3^2 = a_1 x_1^3 + a_2 x_1^2 x_2 + x_2^3 \, .
   \]
   We see that all Weierstrass forms arise in this manner, so that $\pi$ is surjective as in Case~\ref{Case3}.
   %[TO BE FINISHED]
\end{proof}

\subsection{The case where \texorpdfstring{$(n, d, k) = (1, 4, 1)$}{(n, d, k) = (1, 4, 1)}}

The goal of this subsection is to prove the first statement of 
\Cref{thm:n_gen} in case $n = k = 1$ and $d = 4$.
Hence, $X$ is a quartic curve in $\PP^2$ that is reduced (by our standing assumption) and we intersect $X$ with lines $P \in \Gr=\Gr(2,3)$ in $\PP^2$.

\begin{prop}
   \label{Prop.Quartics}
   If $L_X$ is empty, then $\SetC_X = \PP^1$.
\end{prop}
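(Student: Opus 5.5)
The plan is to follow the strategy of \Cref{Prop.n_2_L_X_non-empty}. Identify $\Gr=\Gr(2,3)$ with the dual plane $\PP^2$, so that $\pi$ becomes a rational map $\varphi\colon \PP^2 \dashrightarrow \PP^1$, $[a_0:a_1:a_2]\mapsto \eta(h)$. Here $h$ is the binary quartic obtained by restricting $f$ to the line $a_0x_0+a_1x_1+a_2x_2=0$, and $\eta(h)=[D^3(h):D^3(h)-27E^2(h)]$. By \Cref{Lem.Technical_varphi_surj}, with $U=\Pi$, it suffices to locate the irreducible curves $W\subseteq \PP^2\setminus\Pi$. For each such $W$ I then either show that $\varphi|_W$ is non-constant, or produce another curve meeting $\Pi$ with the same value.

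First I describe $\PP^2\setminus\Pi$. Recall that $\SS$ is the set of binary quartics with at worst double roots. A line $P$ therefore lies outside $\Pi$ iff $P\subseteq X$, or $P$ meets $X$ with multiplicity $\ge 3$ at some point. The lines contained in $X$ form finitely many points of the dual plane, so they are irrelevant. Now let $x\in P\cap X$ with $(P\cdot X)_x\ge 3$.
\begin{itemize}
\item If $x$ is smooth, then $P=T_xX$ and $x$ is a flex or lies on a line component. Since $X$ is reduced, this gives only finitely many lines.
\item If $x$ is a double point, a general line through $x$ meets $X$ with multiplicity exactly $2$ there. Only finitely many lines through $x$ (the tangent-cone lines) are bad.
\end{itemize}
Hence the only curves in $\PP^2\setminus\Pi$ are the pencils $\mathcal H_q=\{P \mid q\in P\}$ for points $q$ of multiplicity exactly $3$; multiplicity $4$ is excluded since $L_X=\emptyset$. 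On such a pencil every $h$ has a triple root, so $D$ and $E$ vanish identically there and $\varphi$ is indeterminate along $\mathcal H_q$ as a formula. Nevertheless $\varphi$ has only finitely many indeterminacy points in $\PP^2$, so $\varphi|_{\mathcal H_q}$ is obtained by cancelling the common power of $a_0$ from $D^3$ and $D^3-27E^2$.

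I also need $\infty\in\pi(\Pi)$, together with the fact that $\varphi$ is dominant. For the first, if $X$ has a non-linear component, take the tangent line at a general smooth point of it; this gives exactly one double root. Otherwise $X$ is four non-concurrent lines, and a general line through a point lying on exactly two of them works. Dominance of $\varphi$ follows from non-constancy on some pencil, or from the explicit computation below.

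The main step, and the one I expect to be the obstacle, is the triple point case. Put $q=[1:0:0]$ and write $f=x_0f_3(x_1,x_2)+f_4(x_1,x_2)$ with $f_3\neq 0$. Reducedness forces $f_3$ and $f_4$ to have no common linear factor of the relevant multiplicity. Up to $\GL_2$ in $(x_1,x_2)$, I would split into three cases according to whether $f_3$ has three distinct roots, a double root, or a triple root, and then normalize $f_4$ as in the Bruce--Wall style case list above.

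In each case I would first try the direct approach used in Cases~\ref{Case3},~\ref{Case4}. Take the lines $x_0=a_1x_1+a_2x_2$, which do not pass through $q$. Substituting gives the binary quartic
\[
(a_1x_1+a_2x_2)f_3+f_4 .
\]
This is $f_4$ plus an arbitrary element of the $2$-dimensional space $f_3\cdot K[x_1,x_2]_1$. I would then show that the $j$-invariant of this family takes every value in $\AA^1$, by computing $D$ and $E$ symbolically in the spirit of \Cref{SageMathCode} and checking that $D^3/(D^3-27E^2)$ is non-constant in $(a_1,a_2)$. Since the family is $2$-dimensional, it is then dominant onto $\PP^1$, and the remaining missed values must be ruled out by hand. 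If some value is missed, I would fall back on the lemma instead: expand to lowest order in $a_0$ along $\mathcal H_q$ to get the restricted map $\varphi|_{\mathcal H_q}$, and show it is non-constant.
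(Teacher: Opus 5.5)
Your framework is the paper's: you reduce via \Cref{Lem.Technical_varphi_surj} to the irreducible curves outside $\Pi$, identify these as the pencils $\mathcal H_q$ through triple points, and record that $[1:0]\in\pi(\Pi)$. The gap is the triple-point case, which is the real content of the proposition and which you leave as a computation still to be attempted.

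Your ``direct approach'' is not a reduction. Since $\mathcal H_q\cap\Pi=\emptyset$, every line in $\Pi$ already has the form $x_0=a_1x_1+a_2x_2$. So showing that $(a_1x_1+a_2x_2)f_3+f_4$ realizes every value in $\AA^1$ is literally the statement to be proved. ``Ruling out the missed values by hand'' is exactly the missing argument.

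Your fallback, showing $\varphi|_{\mathcal H_q}$ is non-constant, works only when $f_3$ has three distinct roots. In that case the line $ax_1+x_2=0$ is sent to $\eta((ax_1+x_2)f_3)$, whose cross-ratio varies with $a$; this is the paper's argument. In the other two cases the fallback is false.

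If $f_3=x_1^2x_2$, then $\varphi|_{\mathcal H_q}\equiv\eta((ax_1+x_2)x_1^2x_2)=[1:0]$. This case is rescued by $[1:0]\in\pi(\Pi)$, but you have to say so.

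If $f_3=x_1^3$, the leading quartic $(a_1x_1+a_2x_2)x_1^3$ is in the nullcone. You must first normalize $f_4$ yourself, since the paper has no ready-made list for quartics. Using $x_0\mapsto x_0-\lambda_0x_1-\lambda_1x_2$, $x_2\mapsto tx_2+ux_1$ and scalings, $f_4$ becomes one of $x_1^2x_2^2+x_2^4$, $x_2^4$, $x_1x_2^3$. Then go to the next order in $a_0$. A short computation with $D,E$ shows $\varphi|_{\mathcal H_q}$ is again constant:
\begin{itemize}
\item for $x_1^2x_2^2+x_2^4$ and $x_2^4$, the value is $[0:1]$ (equianharmonic, $D^3/E^2\to 0$);
\item for $x_1x_2^3$, the value is $[1:1]$ (harmonic, $E^2/D^3\to0$).
\end{itemize}
These are finite values, so neither non-constancy nor $[1:0]\in\pi(\Pi)$ helps. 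You need a curve meeting $\Pi$ on which $\varphi$ takes that specific value, and this is the idea your plan lacks. The paper supplies such curves $\mathcal K$. For $f_4=x_2^4$ resp.\ $x_1x_2^3$, the lines $a_0x_0=a_2x_2$ cut out $x_2(a_2x_1^3+a_0x_2^3)$ resp.\ $x_1x_2(a_2x_1^2+a_0x_2^2)$, which are equianharmonic resp.\ harmonic with distinct roots. For $x_1^2x_2^2+x_2^4$ one takes a line in the dual plane along which $D$ vanishes identically but $E$ does not.

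A minor point: justify dominance as the paper does. A general line meets the reduced curve $X$ in four distinct points, giving a finite value, while $[1:0]$ is also attained. Your ``non-constancy on some pencil'' is unavailable when $X$ has no triple point.
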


For the proof we use the following lemma:

\begin{lem}[{see e.g.~\cite[Propositions~1.1.5 and 1.1.11]{Do2012Classical-algebrai}}]
   \label{Lem.Finitely_many_flex_points}
   Assume that $Z \subseteq \PP^2$ is an irreducible closed curve of degree $> 1$.
   Then, there exist only finitely many $z \in Z$ such that the intersection multiplicity 
   of $Z$ and $T_z Z$ at $z$ is $\geq 3$. \qed
\end{lem}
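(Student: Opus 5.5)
The plan is to express the flex condition at smooth points of $Z$ as an algebraic condition, and then use irreducibility to show that this condition defines a proper closed subset of the curve $Z$, hence a finite set. Write $Z = V(F)$ with $F \in \kk[x_0,x_1,x_2]_e$ irreducible and $e > 1$.

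\textbf{Step 1: singular points.} Since $F$ is irreducible, $Z$ is reduced. Hence $Z_{\sing} = V(F, \partial_0 F, \partial_1 F, \partial_2 F)$ is a proper closed subset of the irreducible curve $Z$, and is therefore finite. At a singular point $z$ the projective tangent space $T_z Z$ is all of $\PP^2$, so these points may satisfy the condition, but there are only finitely many of them. It remains to control the smooth points.

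\textbf{Step 2: smooth points via the Hessian.} For a smooth point $z$, the tangent line is $T_zZ = V\bigl(\sum_i \partial_i F(z)\, x_i\bigr)$. I would use the classical fact, valid in characteristic $0$, that $T_zZ$ meets $Z$ at $z$ with multiplicity $\geq 3$ if and only if the Hessian $H(F) = \det(\partial_i\partial_j F)$ vanishes at $z$. To prove it, restrict $F$ to the parametrized line $z + t v$ with $v \in T_zZ$. The coefficient of $t$ vanishes automatically. The coefficient of $t^2$ is the quadratic form $v^{t}\,\mathrm{Hess}(z)\,v$ evaluated on the $2$-dimensional subspace $T_zZ$. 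Euler's formulas, $\mathrm{Hess}(z)\,z = (e-1)\nabla F(z)$ and $z^t \nabla F(z) = 0$, identify the vanishing of this restricted form with $\det \mathrm{Hess}(z) = 0$; here $e-1 \neq 0$ is used. Consequently the flex points in $Z_{\sm}$ form the closed subset $Z_{\sm} \cap V(H(F))$.

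\textbf{Step 3: the Hessian does not vanish on all of $Z$.} Since $Z$ is irreducible of dimension $1$, the closed subset from Step 2 is either finite or all of $Z_{\sm}$. Suppose every smooth point were a flex. Near a smooth point, in an affine chart, I would write $Z$ analytically as a graph $y = g(x)$. The flex condition at $(x, g(x))$ is then $g''(x) = 0$. If this holds on an open set, $g$ is affine-linear there, so $Z$ contains a nonempty open subset of a line $\ell$. By irreducibility $Z = \ell$, which contradicts $e > 1$.

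I expect the main technical point to be the identification in Step 2 of the order of contact with the vanishing of the Hessian. Working directly with the local graph parametrization of Step 3 would avoid that identification. However, closedness of the flex locus is still needed, and for that I would keep the Hessian description.
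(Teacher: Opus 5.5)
Your proof is correct and follows essentially the same route as the source the paper cites; the paper gives no argument of its own beyond the reference to Dolgachev. In both, the flex condition at smooth points is exactly the vanishing of the Hessian (by the Euler-formula computation you sketch), and an irreducible curve of degree $>1$ is not contained in its Hessian curve, which your local-graph argument ($g''\equiv 0$ forcing $Z$ to contain an open piece of a line) establishes in an elementary way.
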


\begin{proof}[Proof of \Cref{Prop.Quartics}]
   The morphism $\pi \colon \Pi \to \PP^1$
   represents a rational map $\varphi \colon \PP^2 \simeq \Gr \dashrightarrow \PP^1$ 
   which is defined everywhere, except in a finite number of points. 
   Due to \Cref{Lem.Technical_varphi_surj} 
   it is enough to show the following for all irreducible curves 
   $\mathcal{W}$ contained in $\Gr \setminus \Pi$:

   \begin{quote}
        If $\varphi|_{\mathcal{W}}$ is constant, then there is a curve $\mathcal{W}'$ 
        that intersects $\Pi$ and $\varphi(\mathcal{W}') = \varphi(\mathcal{W})$.
   \end{quote}

   Let $q_1, \ldots, q_r \in X$ be the singularities of $X$.
   Denote by $\mathcal{H}_i 
   \subseteq \Gr$ the hyperplane of lines though $q_i$. Then 
   \[
      \Gr \setminus \Pi = (\hat{X} \setminus \Pi) \cup \bigcup_{i=1}^r 
      (\mathcal{H}_i \setminus \Pi) \, .
   \] 
   
   We claim that $\hat{X} \setminus \Pi$ is finite, $\varphi$ is dominant 
   and there is a curve $\mathcal{W}_{\infty} \subseteq \Gr$ with
   $\mathcal{W}_{\infty} \cap \Pi \neq \emptyset$ and 
   $\varphi(\mathcal{W}_{\infty}) = \{[1:0]\}$. Indeed,
   due to \Cref{Lem.Finitely_many_flex_points} applied to the irreducible 
   components $Z$ of $X$ that are not lines, we have that 
   $T_z Z \in \Pi$ for all but finitely many smooth points $z \in Z$. 
   Hence, $\hat{X} \setminus \Pi$ is finite. 
   If $X$ consists only of lines, then we choose $\mathcal{W}_{\infty}$
   to be the set of lines through one singularity of $X$ of multiplicity $2$. Such a singularity exists: since $L_X$ is empty, not all four lines go through a common point, and it follows that there is a point where precisely two of the lines meet. We get $\mathcal{W}_{\infty} \cap \Pi \neq \emptyset$,
   $\varphi(\mathcal{W}_{\infty}) = \{[1:0]\}$. If $X$ contains an irreducible
   component $Y$ of degree $> 1$, then $\dim \hat{X} = 1$ and choosing 
   $\mathcal{W}_{\infty} = \hat{Y}$ yields $\mathcal{W}_{\infty} \cap \Pi \neq \emptyset$, 
   $\varphi(\mathcal{W}_{\infty}) = \{[1:0]\}$.
   Moreover, $P \cap X$ is reduced for some $P \in \Gr$ (as $X$ is reduced)
   and thus $\pi(P) \in \AA^1$. Thus, $\varphi$ is dominant.

   If $X$ has multiplicity $2$ at $q_i$, then for sufficiently general 
   $P \in \mathcal{H}_i$ we have that the multiplicity of $P \cap X$ at $q_i$ is $2$ and thus $\mathcal{H}_i \setminus \Pi$ is finite.

   Now, assume $X$ has multiplicity $3$ at some point, say at $q \coloneqq q_1$.
   Choose coordinates such that $q = [1:0:0]$ and then $\mathcal{H} \coloneqq \mathcal{H}_i$
   consists of the lines given by $a x_1 + b x_2=0$ for $(a, b) \neq (0, 0)$. 
   Note that $\mathcal{H} \subseteq \Gr \setminus \Pi$. It suffices to show
   that if $\varphi |_{\mathcal{H}}$  is constant, then there is an irreducible curve in $\Gr$ that intersects $\Pi$ and maps 
   to the same point as $\mathcal{H}$ via $\varphi$.
   
   There are $f_j \in \kk[x_1, x_2]_j$ for $j = 3, 4$ with
   \[
      f = x_0 f_3(x_1, x_2) + f_4(x_1, x_2) \, ,
   \] 
   and $f_3$ is non-zero. Up to an invertible linear coordinate change in $x_1, x_2$, we have
   \[
      f_3 \in \{ x_1 x_2 (x_1  + x_2), x_1^2 x_2, x_1^3 \} \, .
   \]

   \textbf{Case 1: $f_3 \in \{ x_1 x_2 (x_1 + x_2), x_1^2x_2 \}$:} 
   Take $a \in \kk \setminus \{0, 1\}$ such that $\varphi$ is defined at the line given by 
   $a x_1 + x_2$
   (which belongs to $\mathcal{H}$). For $t \in \kk$ denote by 
   $L_t^a$ the line given by $t x_0 - (a x_1 + x_2)$. Then, for $t \neq 0$ the intersection 
   $X \cap L_t^a$ inside $L_t^a$ is given by
   \[
      (a x_1 + x_2)f_3(x_1, x_2) + t f_4(x_1, x_2) \, .
   \]
   Since the above equation has at most double roots for $t = 0$, 
   the same holds for sufficiently general $t \in \kk$. 
   % Consider the morphism
   % \[
   %    \theta \colon \AA^1 \to \PP^1 \, , \quad t \mapsto
   %    \eta\left( (a x_0 + x_1)x_0 x_1 (x_0+ x_1) + t f_4(x_0, x_1) \right)
   % \]
   % (this rational map is in fact a morphism, since $\AA^1$ is a smooth curve).
   Then $\varphi(L_t^a) = \pi(L_t^a)$ for sufficently general $t \in \kk$ and
   \begin{align*}
      \varphi(L_0^a) = \lim_{t \to 0} \varphi(L_t^a) &= 
      \lim_{t \to 0} \eta \left( (a x_1 + x_2)f_3(x_1, x_2) + t f_4(x_1, x_2) \right) \\
      &= \eta \left( (a x_1 + x_2)f_3(x_1, x_2) \right) \, .
   \end{align*}
   This implies that $\varphi |_{\mathcal{H}}$ is non-constant
   in case $f_3 = x_1 x_2 (x_1 + x_2)$. In case $f_3 = x_1^2 x_2$,
   we see that $\varphi(\mathcal{H}) = \{[1:0] \} \in \PP^1$ and we use $\varphi(\mathcal{W}_{\infty}) = \{[1:0]\}$.

   \textbf{Case 2: $f_3 = x_1^3$:} Write $f_4 = \sum_{i=0}^4 \lambda_i x_1^{4-i} x_2^i$.
   If $\lambda_4 \neq 0$, we may assume after replacing $x_2$ by 
   $x_2 - \frac{\lambda_3}{4\lambda_4} x_1$ that $\lambda_3 = 0$.
   After scaling $x_0, x_1, x_2$ we may further assume that
   $(\lambda_2, \lambda_4)$ is equal to $(1, 1)$ or to $(0, 1)$. Moreover, 
   after replacing $x_0$ with $x_0 - \lambda_0 x_1 - \lambda_1 x_2$, we may in addition assume
   that $\lambda_0 = \lambda_1 = 0$.

   If $\lambda_4 = 0$, then $\lambda_3 \neq 0$, since otherwise
   $x_1^2$ would divide $f$. Similarly as before,
   we may assume $\lambda_0 = \lambda_1 = \lambda_2 = 0$ and $\lambda_3 = 1$.

   Hence, we are reduced to study the cases 
   $f_4 \in \{ x_1^2 x_2^2 + x_2^4, x_2^4, x_1 x_2^3  \}$. 
   Calculating the gradient of $f$, 
   we see that $q$ is the only singularity of $X$ and hence
   $\mathcal{H}$ is the only irreducible curve inside $\Gr \setminus \Pi$.
   Let us define a curve $\mathcal{K} \subseteq \Gr$ with respect to $f_4$ as follows
   (the coordinates in $\Gr \simeq \PP^2$ are given by $[a_0: a_1:a_2]$):
   \begin{center}
      \begin{tabular}{r|c|c|c}
         $f_4$ & $x_1^2 x_2^2 + x_2^4$ & $x_2^4$ & $x_1 x_2^3$ \\
         \hline
         $\mathcal{K}$ & $a_0 - 12a_1$ & $a_1$ & $a_1$
      \end{tabular} \, .
   \end{center}
   Hence, $\mathcal{K} \neq \mathcal{H}$.
   A SageMath computation (see \Cref{SageMathCode}) shows
   $\varphi(\mathcal{H}) = \varphi(\mathcal{K})$.

   \smallskip
   
   The analysis above implies now, that $\pi \colon \Pi \to \PP^1$ is surjective.
\end{proof}

\subsection{The case where \texorpdfstring{$n > k$}{n > k}}
\label{Section.Strictly_greater_than_2}
Now we can prove the first statement of \Cref{thm:n_gen} for $n > k$
by induction. The crucial point is the following observation:

\begin{lem}
\label{Lem.reduction} Let $n \geq k+1$.
Assume that $\dim L_{X \cap H} = n - 1-k$ for every hyperplane $H$ such that 
$\dim (X \cap H)_{\sing} + k \leq n-1$. Then $\dim L_{X} = n-k$.
\end{lem}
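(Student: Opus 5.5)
A general hyperplane $H$ satisfies the hypothesis, so the assumption gives $\dim L_{X\cap H}=n-1-k$ for general $H$. The plan is to show that the vertex spaces $L_{X\cap H}$, as $H$ varies, are forced to come from a single fixed linear space contained in $L_X$. For general $H$ (here $H\simeq \PP^n$, $X\cap H$ is a hypersurface of degree $d$ in $H$), Bertini (\cite[Corollary in \S5]{Kl1974The-transversality}) gives $(X\cap H)_{\sing}\subseteq H\cap X_{\sing}$, so $\dim (X\cap H)_{\sing}\le n-k-1$. Hence the hypothesis applies to a dense open set $\mathcal{U}$ of hyperplanes, and for $H\in\mathcal{U}$ we obtain a linear space $M_H:=L_{X\cap H}\subseteq H$ of dimension $n-1-k\ge 0$. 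By definition $X\cap H$ is then a cone with vertex $M_H$: its equation $f|_H$ has multiplicity $d$ at every point of $M_H$.

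Since multiplicity $d$ of $f|_H$ at $x$ means that all $(d-1)$-st order derivatives of $f|_H$ vanish at $x$, the points of $M_H$ satisfy strong conditions. In particular $M_H\subseteq (X\cap H)_{\sing}\subseteq X_{\sing}\cap H$ for $H\in\mathcal{U}$. Consider the incidence variety $\mathcal{M}=\overline{\{(H,x): H\in\mathcal{U},\ x\in M_H\}}$. It has dimension $(n+1)+(n-1-k)=2n-k$, and its image $Z\subseteq X_{\sing}$ has dimension at most $n-k$ by the standing assumption. For $z\in Z$ general, the fibre over $z$ is therefore a family of hyperplanes through $z$ of dimension $\ge 2n-k-(n-k)=n$. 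This is all hyperplanes through $z$, since these form a $\PP^{n}$.

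Thus for general $z\in Z$ and general $H\ni z$, the restriction $f|_H$ has multiplicity $d$ at $z$. The key local step is to show that this forces $f$ itself to have multiplicity $d$ at $z$. Put $z=[1:0:\dots:0]$ and write $f=\sum_{j} x_0^{d-j}f_j(x_1,\dots,x_{n+1})$. Multiplicity $d$ of $f|_H$ at $z$ means that every $f_j$ with $j<d$ vanishes identically on the hyperplane $H'\subseteq\AA^{n+1}$ corresponding to $H$. If this holds for a dense set of linear hyperplanes $H'$, then each $f_j$ with $j<d$ is zero, so $z\in L_X$. It follows that $Z\subseteq L_X$, whence $\dim L_X\ge\dim Z$.

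It remains to show $\dim Z\ge n-k$, and this is the main obstacle. A general $H$ meets $Z$ in dimension $\dim Z-1$, and $M_H\subseteq Z\cap H$ has dimension $n-1-k$, so $\dim Z\ge n-k$. Together with $\dim L_X\le n-k$ (\Cref{Rem.Bound_dim_L_X}), this gives equality. The delicate point is the fibre-dimension argument: I must check that $\dim M_H$ is constant on $\mathcal{U}$, which the hypothesis gives, and that $\mathcal{M}$ is irreducible, or at least work with a component dominating $\mathcal{U}$. Then $Z$ may be taken to be the image of that component. The case $n-1-k=0$ ($M_H$ a point) works the same way.
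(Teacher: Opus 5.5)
Your proof is correct. Its outer structure matches the paper's: show that the vertices of general hyperplane sections lie in $L_X$, then use that these vertices cannot all coincide. The difference is in how you obtain the key inclusion $L_{X\cap H}\subseteq L_X$.

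The paper first proves the equality $X_{\sing}\cap H=(X\cap H)_{\sing}=L_{X\cap H}$ for all $H$ in an open dense set $\mathcal{H}$. The first equality uses $H\notin\hat X$. The second uses the cone structure of \Cref{Rem.Locally-tribial-bundle}: a singular point of the base of the cone $X\cap H$ would produce a singular locus of dimension $\geq n-k$. With this equality the inclusion is pointwise: if $x\in L_{X\cap H}$, then $x\in X_{\sing}\cap H'=L_{X\cap H'}$ for \emph{every} $H'\in\mathcal{H}$ through $x$.

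You only use the weaker inclusion $L_{X\cap H}\subseteq X_{\sing}\cap H$, and you replace the equality by a fibre-dimension count on the incidence variety. The vertices sweep out a locus of dimension at most $n-k$ inside $X_{\sing}$, while the incidence has dimension $2n-k$. Hence a general point of that locus lies in the vertex of a dense set of hyperplane sections through it. Your route avoids the structural fact about singular loci of cones. It also spells out the local step (vanishing of the $f_j$ with $j<d$ on a dense set of hyperplanes), which the paper asserts without detail. The paper's route is shorter and needs no bookkeeping about components of the incidence variety.

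On the points you flag:
\begin{itemize}
\item The set $\{(H,x)\mid H\in\mathcal{U},\ x\in M_H\}$ is closed in $\mathcal{U}\times\PP^{n+1}$, since ``$f|_H$ has multiplicity $\geq d$ at $x$'' is a closed condition on $(H,x)$. It is therefore proper over $\mathcal{U}$ with irreducible fibres of constant dimension $n-1-k$, hence irreducible. So you may take $Z$ to be the closure of its image.
\item What you call the main obstacle follows from the same count. Each fibre of $\mathcal{M}\to Z$ lies in the $\PP^n$ of hyperplanes through a point, so $\dim Z\geq\dim\mathcal{M}-n=n-k$ directly, without the hyperplane-section argument.
\end{itemize}
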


\begin{proof}
Consider
\[
   \mathcal{H} \coloneqq \Bigset{H}{
      \begin{array}{l}
         \textrm{$H$ is a hyperplane such that} \\
         \textrm{$\dim (H \cap X)_{\sing} + k \leq n-1$ and $H \not \in \hat{X}$}
      \end{array}
   } \, .
\]
Then $\mathcal{H}$ is open and dense in the space of all hyperplanes, since
$\dim X_{\sing} + k \leq n$ and 
$(X \cap H)_{\sing} =  X_{\sing} \cap H$
for $H \in \mathcal{H}$
(here we use that $H \not \in \hat{X}$).

Recall that $p_{X \cap H} \colon (X \cap H) \setminus L_{X \cap H} \to Z_X$ is a locally
trivial $\AA^{n-k}$-bundle, where $Z_X \subset \PP^k$ is closed and irreducible, and we see $X \cap H$
as a hypersurface in $H \simeq \PP^n$ (\Cref{Rem.Locally-tribial-bundle}). 
Since $\dim (X \cap H)_{\sing} \leq n-1-k$
we have that $Z_X$ is smooth and thus $(X \cap H) \setminus L_{X \cap H}$
is smooth. This shows that
\[
   \label{Eq.multiplicity3_eq_sing}
   X_{\sing} \cap H = (X \cap H)_{\sing} = L_{X \cap H} \quad\quad
   \textrm{for all $H \in \mathcal{H}$} \, .
\]

We claim that $L_{X \cap H} \subseteq L_X$ for all $H \in \mathcal{H}$.
Indeed, let $x \in L_{X \cap H} = X_{\sing} \cap H$ for some $H \in \mathcal{H}$
and denote $\mathcal{H}_x \coloneqq \set{H' \in \mathcal{H}}{x \in H'}$. 
Then $x \in X_{\sing} \cap H' =  
L_{X \cap H'}$ for all $H' \in \mathcal{H}_x$
and since $\mathcal{H}_x$ is open and dense in the linear space
of all hyperplanes through $x$, we conclude that $X$ 
has multiplicity $d$ at $x$, i.e.\ $x \in L_X$.

Now, $\dim L_X \geq \dim L_{X \cap H} = n-1-k$ for all $H \in \mathcal{H}$. 
If $\dim L_X = n-1-k$, then $L_{X \cap H} = L_X$ for all $H \in \mathcal{H}$,
and thus $L_X$ is contained in every $H \in \mathcal{H}$, contradiction
(note that 
$L_X \neq \emptyset$, as $n \geq k+1$). Thus, $\dim L_X = n-k$
(by \Cref{Rem.Bound_dim_L_X}).
\end{proof}

\begin{proof}[Proof of \Cref{thm:n_gen} when $n > k$]
We assume $L_X < n - k$. Using \Cref{Lem.reduction}
we get a hyperplane $H$ such that $\dim (H \cap X)_{\sing} + k \leq n-1$ 
and $\dim L_{X \cap H} < n-1-k$.
By the induction hypothesis applied to $X \cap H$ in $H \simeq \PP^n$ we get $\SetC_{X \cap H} = \PP^1$. Since $\SetC_{X \cap H} \subseteq \SetC_X$, 
we conclude that $\SetC_X = \PP^1$.
\end{proof}

\subsection{\texorpdfstring{Proof of \Cref{thm:LowRank}}{Proof of Theorem 1.7}}

\begin{proof}[Proof of \Cref{thm:LowRank} using \Cref{thm:Cubics}.]
\begin{enumerate}
    \item 
Let $f \in S^3 V$ have $\underline{Q}_s(f) \leq 3$. If $\underline{Q}_s(f)=0$, then $f=0$ and also $Q_s(f)=0$. If $\underline{Q}_s(f)=1$, then $f \neq 0$ and for a sufficiently general linear map $\varphi:V \to \CC$ we have $\varphi f \neq 0$, so that $\varphi f$ is (a nonzero scalar multiple of) $e_1^3$. Hence $Q_s(f)=1$. If $\underline{Q}_s(f)=2$, then the locus $\{\varphi f \mid \varphi \in \Hom(V,\CC^2)\}$ has $I_2:=e_1^3 + e_2^3$ in its closure. But the $\GL_2$-orbit of this latter polynomial is dense in $S^3 \CC^2$, and hence that orbit intersects said locus. We conclude that $Q_s(f)=2$. 

It remains to study the most interesting case, namely, $\underline{Q}_s(f)=3$. If $\dim(V)=3$, then $f$ has maximal symmetric border subrank, and hence maximal symmetric subrank $3$ by \Cref{lm:max_sym_bordersubrank}. This also applies if $\dim(V) \geq 4$ and $f$ uses only $3$ variables. So we may assume that $\dim(V) \geq 4$ and $f$ uses at least $4$ variables. Let $X \subseteq \PP V^*$ be the cubic hypersurface defined by $f$. For a linear surjection $\varphi:V \to \CC^3$ that does not map $f$ to zero, $\varphi f$ is the equation for the projective curve in $\PP^2$ that is the (scheme-theoretic) pre-image of $X$ under the dual injection $\PP^2 \to \PP V^*$ induced by $\varphi$. The assumption $\underline{Q}_s(f) \geq 3$ means that the Fermat cubic $e_1^3+e_2^2+e_3^3$ lies in the closure of the locus of all such $\varphi f$. In particular, $X$ is normal, because otherwise all $\varphi f$ would define singular curves, and the Fermat cubic is smooth. Hence the conditions of \Cref{thm:Cubics} are satisfied, and we conclude that there exists a $\varphi$ such that $\varphi f$ equals the Fermat cubic. 

\item Now let $f \in S^4 V$ have $\underline{Q}_s(f) \leq 2$. If $\underline{Q}_s(f) \leq 1$, then we can argue as above to show that $Q_s(f)=\underline{Q}_s(f)$. So suppose that $\underline{Q}_s(f)=2$. For a linear surjection $\varphi:V \to \CC^2$ that does not map $f$ to zero, $\varphi f$ is the equation of the finite set of points on $\PP^1$ that is the pre-image of $X$ under the dual injection $\PP^1 \to \PP V^*$. The assumption that $\underline{Q}_s(f)=2$ implies that the equation $e_1^4+e_2^4$ lies in the closure of the locus of all such $\varphi f$. In particular, $X$ is reduced. Hence all conditions of \Cref{thm:Cubics} are satisfied, and we conclude that there exists a $\varphi$ such that $\varphi f=e_1^4+e_2^4$. \qedhere
\end{enumerate}

\end{proof}

\begin{re}
    The proof above shows how much stronger \Cref{thm:Cubics} is than \Cref{thm:LowRank}: in the latter part of the proof, not only the Fermat cubic, but {\em any} smooth cubic curve can be obtained from $f$ by a suitable $\varphi:V \to \CC^3$; 
    and this applies similarly to quartic hypersurfaces and (crossratios of) $4$ points in $\PP^1$.
\end{re}

\appendix
\section{SageMath codes}
\label{SageMathCode}

The following SageMath code is used in the proof of  
\Cref{Prop.n_2_L_X_non-empty}:

\begin{center}
\begin{minipage}{\textwidth} 
\begin{lstlisting}
SR.<lam, a0, a1, a2, a3> = QQ[]
P.<x1, x2, x3> = PolynomialRing(SR)

poly = (a1*x1 + a2*x2 + a3*x3)*x1^2 \
        - a0*(x2^3 + x3^3 + lam*x1*x2*x3)
cubic = invariant_theory.ternary_cubic(poly)
     
S = cubic.S_invariant()
T = cubic.T_invariant()
    
A = S^3
B = 2^6*S^3 + T^2
g = gcd(A, B)

A_ = (A // g).subs(a0 = 0)
B_ = (B // g).subs(a0 = 0)

show((A_, B_))
\end{lstlisting}
\end{minipage}
\end{center}

\bigskip
The following SageMath code is used in the proof of \Cref{Prop.Quartics}:

\begin{center}
\begin{minipage}{\textwidth} 
\begin{lstlisting}
SR.<a_0, a_1, a_2> = QQ[]
P.<x_1, x_2> = PolynomialRing(SR)

def normalized_pair(A, B):
    g = gcd(A, B)
    return A // g, B // g

def pi(poly):
    q = invariant_theory.binary_quartic(poly, x_1, x_2)
    E = q.EisensteinE()
    D = q.EisensteinD()
    return normalized_pair(D^3, D^3 - 27*E^2)

T = [
    (1, 1/12, a_2, x_1^2*x_2^2 + x_2^4),
    (1, 0,    a_2, x_2^4),
    (1, 0,    a_2, x_1*x_2^3),
]

for b0, b1, b2, term in T:
    A, B = pi((a_1*x_1 + a_2*x_2)*x_1^3 - a_0*term)
    show(normalized_pair(A.subs({a_0: 0}),
                         B.subs({a_0: 0})))

    show(normalized_pair(A.subs({a_0: b0, a_1: b1, a_2: b2}),
                         B.subs({a_0: b0, a_1: b1, a_2: b2})))
\end{lstlisting}
\end{minipage}
\end{center}

\bibliographystyle{alpha}\bibliography{math}

\end{document}